\theoremstyle{plain}
\newtheorem{theorem}{Theorem}
\newtheorem{lemma}[theorem]{Lemma}
\newtheorem{proposition}[theorem]{Proposition}
\newtheorem{corollary}[theorem]{Corollary}
\newtheorem*{claim*}{Claim}
\newtheorem*{case*}{Case}
\newtheorem{fact}[theorem]{Fact}
\theoremstyle{definition}
\newtheorem{definition}[theorem]{Definition}
\newtheorem{remark}[theorem]{Remark}
\newtheorem{convention}[theorem]{Convention}
\newtheorem{convention-remark}[theorem]{Convention-Remark}
\numberwithin{equation}{section}
\numberwithin{theorem}{section}
\newcommand{\fakeenv}{} 
\newenvironment{restate}[2]  
{ 
 \renewcommand{\fakeenv}{#2} 
 \theoremstyle{plain} 
 \newtheorem*{\fakeenv}{#1~\ref{#2}} 
 \begin{\fakeenv}
}
{
 \end{\fakeenv}
}
\newcommand{\NN}{\mathbb{N}} 
\newcommand{\PP}{\mathbb{P}}
\newcommand{\ZZ}{\mathbb{Z}} 
\newcommand{\calA}{\mathcal{A}}
\newcommand{\calF}{\mathcal{F}}
\newcommand{\calH}{\mathcal{H}}
 \newcommand{\g}{\mathfrak{g}}
\newcommand{\fF}{\mathcal{FF}}
\newcommand{\param}%
	{{\mathchoice{\mkern1mu\mbox{\raise2.2pt\hbox{$\centerdot$}}\mkern1mu}%
	{\mkern1mu\mbox{\raise2.2pt\hbox{$\centerdot$}}\mkern1mu}%
	{\mkern1.5mu\centerdot\mkern1.5mu}{\mkern1.5mu\centerdot\mkern1.5mu}}}
\DeclareMathOperator{\Curr}{Curr}
\newcommand{\PCurr}{\PP {\rm Curr}}
\DeclareMathOperator{\Aut}{Aut}
\DeclareMathOperator{\IA}{IA}
\DeclareMathOperator{\CT}{CT}
\DeclareMathOperator{\Out}{Out}
\DeclareMathOperator{\FN}{F_N}
\DeclareMathOperator{\F}{F}
\DeclareMathOperator{\RN}{R_N}
\DeclareMathOperator{\NEG}{NEG}
\DeclareMathOperator{\EG}{EG}
\DeclareMathOperator{\INP}{INP}
\DeclareMathOperator{\supp}{supp}
\DeclareMathOperator{\Stab}{Stab}
\DeclareMathOperator{\PF}{PF}
\DeclareMathOperator{\pINP}{pINP}
\begin{document}



\title[Hyperbolic extensions of free groups from atoroidal ping-pong]{Hyperbolic extensions of free groups from atoroidal ping-pong}

\author[C.~Uyanik]{Caglar Uyanik}
\address{Department of Mathematics \\
Yale University\\
10 Hillhouse Ave. \\
New Haven, CT 06511, USA}
\email{\href{mailto:caglar.uyanik@yale.edu}{caglar.uyanik@yale.edu}}

\thanks{The author gratefully acknowledges support from NSF grants DMS 1405146 and DMS 1510034}
\begin{abstract}
We prove that all atoroidal automorphisms of $\Out(\FN)$ act on the space of projectivized geodesic currents with generalized north-south dynamics.  As an application, we produce new examples of non virtually cyclic, free and purely atoroidal subgroups of $\Out(\FN)$ such that the corresponding free group extension is hyperbolic. Moreover, these subgroups 
are not necessarily convex cocompact. \end{abstract}

\maketitle



\section{Introduction}

Let $\FN$ be a free group of rank $N\ge3$, and $\Out(\FN)$ be its outer automorphism group. Consider the following short exact sequence 
\[
1\longrightarrow\FN\stackrel{\iota}{\longrightarrow}\Aut(\FN)\stackrel{q}{\longrightarrow}\Out(\FN)\longrightarrow1
\]
where $\iota$ sends an element of $\FN$ to the corresponding inner automorphism, and $q$ is the natural quotient map.

Given a subgroup $\Gamma<\Out(\FN)$, the preimage $E_{\Gamma}=q^{-1}(\Gamma)$ gives an extension of $\FN$. In fact, any extension of $\FN$ produces an extension of this form, see \cite[Section 2]{DTHyp}.
Motivated by a long history of investigating hyperbolic extensions of hyperbolic groups \cite{BF, BFH97, FM, Ha, shadows, Mosher}, Dowdall and Taylor \cite{DTHyp} initiated a systematic study of the following question: 
\[
\textit{What conditions on the group $\Gamma$ guarantees that the extension group $E_{\Gamma}$ is hyperbolic?} 
\]

When the group $\Gamma$ is infinite cyclic, generated by $\varphi\in\Out(\FN)$, combined work of Bestvina-Feighn \cite{BF} and Brinkmann \cite{Brink} shows that $E_{\Gamma}$ is hyperbolic if and only if $\varphi$ is \emph{atoroidal}, meaning that no power of $\varphi$ fixes a non-trivial conjugacy class in $\FN$. Dowdall and Taylor \cite{DTHyp} proved that if a finitely generated subgroup $\Gamma<\Out(\FN)$ is purely atoroidal (i.e. every infinite order element is atoroidal), and the orbit map from $\Gamma$ into the free factor complex is a quasi-isometric embedding, then the extension $E_{\Gamma}$ is hyperbolic. The second condition also implies that every infinite order element  $\varphi\in\Gamma$ is \emph{fully irreducible}, meaning that no power of $\varphi$ fixes the conjugacy class of a proper free factor, see section \ref{prelim} for definitions.   

So far the only known examples of hyperbolic extensions of free groups come from slight variations, or iterated applications of these two constructions, and Shottky-type subgroups generated by high powers of fully irreducible and atoroidal elements. The following subgroup alternative result allows us to produce more examples: 

\begin{theorem}\label{maintwo} Let $\calH<\Out(\FN)$ be a subgroup that contains an atoroidal element $\varphi$. Then one of the following occurs:
\begin{enumerate}
\item
There is some minimal $\calH$-invariant free factor $A$ of $\FN$ such that the restriction of $\calH$ to $A$ is virtually cyclic in $\Out(A)$.  
\item There exists a subgroup $\Gamma\le \calH$ such that $\Gamma\cong \F_2$ and that every nontrivial element of $\Gamma$ is atoroidal. Moreover, the corresponding extension group $E_\Gamma$ is hyperbolic.
\end{enumerate}
\end{theorem}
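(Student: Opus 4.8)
\emph{Strategy.} The proof is a ping-pong, or Tits-alternative, argument, run on the space of projectivized geodesic currents and fed into Dowdall--Taylor's criterion for hyperbolicity of $E_\Gamma$. Two facts are used throughout. First, since $\varphi$ is atoroidal it acts on $\PCurr(\FN)$ with generalized north--south dynamics: there are compact $\varphi$-invariant attracting and repelling simplices $\Delta_+(\varphi),\Delta_-(\varphi)$, and neither contains a rational current, because the attracting and repelling laminations of an atoroidal automorphism contain no closed leaf. Second, by Brinkmann \cite{Brink}, the Bestvina--Feighn combination theorem \cite{BF}, and Dowdall--Taylor \cite{DTHyp}, an automorphism is atoroidal exactly when it acts loxodromically on the co-surface graph $\coS$, and a finitely generated $\Gamma\le\Out(\FN)$ has $E_\Gamma$ hyperbolic exactly when the orbit map $\Gamma\to\coS$ is a quasi-isometric embedding.

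\emph{The dichotomy.} Ask whether some finite-index subgroup of $\calH$ preserves the pair $\{\Delta_+(\varphi),\Delta_-(\varphi)\}$. Suppose first it does \emph{not}. A standard argument then yields $h\in\calH$ with $h\cdot(\Delta_+(\varphi)\cup\Delta_-(\varphi))$ disjoint from $\Delta_+(\varphi)\cup\Delta_-(\varphi)$, so that $\psi:=h\varphi h^{-1}$ is atoroidal and the four simplices $\Delta_\pm(\varphi)$, $\Delta_\pm(\psi)=h\cdot\Delta_\pm(\varphi)$ are pairwise disjoint. Taking disjoint neighbourhoods of these compacta and using the north--south dynamics of $\varphi$ and $\psi$, the ping-pong lemma gives $\Gamma:=\langle\varphi^{n},\psi^{n}\rangle\cong\F_2$ for all large $n$. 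Every nontrivial $g\in\Gamma$, being a ping-pong element, itself has north--south dynamics on $\PCurr(\FN)$, and the fixed set of $g$ and of every power of $g$ lies inside $\Delta_+(\varphi)\cup\Delta_-(\varphi)\cup\Delta_+(\psi)\cup\Delta_-(\psi)$; as none of these simplices contains a rational current, no power of $g$ fixes a conjugacy class, so $\Gamma$ is purely atoroidal. Finally $\varphi^{n}$ and $\psi^{n}$ are independent loxodromics of the hyperbolic graph $\coS$ --- independence being inherited from the transversality of the current simplices, or else forced by a further instance of the dichotomy --- so ping-pong in $\coS$ makes $\Gamma\to\coS$ a quasi-isometric embedding, whence $E_\Gamma$ is hyperbolic by Dowdall--Taylor. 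This is conclusion (2). Since $\varphi$ was never assumed fully irreducible, the group $\Gamma$ need not quasi-isometrically embed in the free factor complex, hence need not be convex cocompact.

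\emph{The other branch.} Now suppose a finite-index $\calH_0\le\calH$ fixes each of $\Delta_+(\varphi)$ and $\Delta_-(\varphi)$. Then $\calH_0$ fixes the attracting lamination $\Lambda$ supporting $\Delta_+(\varphi)$, hence preserves the canonical minimal free factor system $\calF$ carrying $\Lambda$. If $\calF=\{[\FN]\}$ then $\Lambda$ is filling, the stabilizer of the filling pair $\{\Delta_+(\varphi),\Delta_-(\varphi)\}$ is virtually cyclic (it is the virtual centralizer of $\varphi$), so $\calH$ is virtually cyclic and conclusion (1) holds with $A=\FN$. Otherwise $\calF$ is proper; after a further finite-index reduction $\calH_0$ fixes each of its components, and on a component $A_1$ carrying an indecomposable piece of $\Lambda$ a power of $\varphi$ restricts to a fully irreducible atoroidal element of $\Out(A_1)$ whose attracting and repelling currents are points fixed by $\calH_0|_{A_1}$; hence $\calH|_{A_1}$ is virtually cyclic, and passing to a minimal $\calH$-invariant free factor $A\subseteq A_1$ --- which only shrinks a virtually cyclic restriction --- yields conclusion (1).

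\emph{Main obstacle.} The content lies in the second branch: upgrading the statement that $\calH_0$ fixes the current simplices of $\varphi$ to the free-factor conclusion (1). This needs (i) the relative-train-track bookkeeping that locates on the minimal carrier of the attracting lamination a fully irreducible atoroidal restriction of a power of $\varphi$, compatibly with the restriction of currents, and (ii) the rigidity statement that the stabilizer of a \emph{filling} attracting--repelling pair of current simplices is virtually cyclic --- the extension to reducible atoroidal $\varphi$, whose simplices may be positive-dimensional, of the classical fact that the virtual centralizer of a fully irreducible automorphism is virtually cyclic. A secondary point, already in the first branch, is (iii): verifying that transversality of the simplices in $\PCurr(\FN)$ translates into independence of $\varphi^{n}$ and $\psi^{n}$ as loxodromics of $\coS$, so that the current dynamics and the Dowdall--Taylor criterion can be combined.
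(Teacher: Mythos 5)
Your outline (a dichotomy followed by ping-pong on $\PCurr(\FN)$) superficially resembles the paper's argument, but both branches have genuine holes at the decisive steps. The fatal one is in your first branch, where hyperbolicity of $E_\Gamma$ is obtained via the co-surface graph. Both background claims there are wrong: by \cite{DTcosurface} the loxodromic isometries of $\coS$ are exactly the elements that are \emph{both} fully irreducible and atoroidal, not all atoroidal elements, and a quasi-isometric orbit map to $\coS$ is a sufficient condition for hyperbolicity of $E_\Gamma$, not an equivalent one. Since $\varphi$ is only assumed atoroidal, it is typically reducible, so $\varphi^{n}$ is not loxodromic on $\coS$ and the orbit map of $\Gamma=\langle\varphi^{n},\psi^{n}\rangle$ into $\coS$ (equivalently, into the free factor graph) is never a quasi-isometric embedding in precisely the new cases the theorem addresses; indeed the point of the paper is that these extensions are hyperbolic while $\Gamma$ is \emph{not} convex cocompact, which your claimed equivalence would contradict. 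The paper proves hyperbolicity by a different mechanism: a three-out-of-four doubling (flaring) estimate on currents under $\varphi^{\pm n},\psi^{\pm m}$ (Proposition \ref{flare}), deduced from the generalized north--south dynamics, fed into the Bestvina--Feighn combination theorem following \cite{BFH97} and \cite{KL5}; pure atoroidality comes with it. Your separate pure-atoroidality argument is also incomplete: ping-pong only localizes a fixed point of $g\in\Gamma$ into the chosen \emph{neighborhoods} of the four simplices, and rational currents are dense in $\PCurr(\FN)$, so the absence of rational currents in the simplices themselves does not exclude a fixed rational current.

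The second branch rests on an unproved rigidity statement that you flag yourself: that the stabilizer of a filling attracting--repelling pair of simplices of a \emph{reducible} atoroidal automorphism is virtually cyclic. The theorem of \cite{KL5} covers only the fully irreducible case (a pair of points, not positive-dimensional simplices), and neither this paper nor the cited literature supplies the extension, so this is a missing theorem rather than a detail; moreover, the restriction of $\varphi$ to a component carrying a piece of its attracting lamination need not be fully irreducible, so the reduction you sketch does not go through as stated. The paper avoids all of this by running the dichotomy directly on minimal $\calH$-invariant free factors: if no restriction is virtually cyclic, Handel--Mosher together with \cite{CU} produce a single $\theta\in\calH$ restricting to a fully irreducible atoroidal element on every minimal factor, the virtually cyclic stabilizer theorem is invoked only in that fully irreducible setting, independence of $\varphi$ and $\eta=\theta^{M}\varphi\theta^{-M}$ is established by a lamination-support argument (Lemma \ref{goodconjugator}), and freeness of $\langle\varphi^{K},\eta^{K}\rangle$ comes from Taylor's subfactor-projection theorem rather than from ping-pong on currents. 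To repair your write-up you would need to replace the co-surface-graph step by the flaring-plus-combination-theorem argument and either prove the simplex-stabilizer rigidity or restructure the dichotomy as the paper does.
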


\begin{remark} Note that Theorem \ref{maintwo} generalizes a well known result of Bestvina-Feighn-Handel. Indeed, if the subgroup $\calH$ is irreducible, namely no finite index subgroup of $\calH$ fixes a proper free factor, then $\calH$ contains a fully irreducible element by a theorem of Handel--Mosher \cite{HMIntro}, see also \cite{Hshort}. Since $\calH$ contains an atoroidal element, then \cite[Theorem 5.4]{UyaNSD} implies that $\calH$ contains an element which is both fully irreducible and atoroidal. In that case, Bestvina-Feighn-Handel \cite{BFH97} show that either $\calH$ is virtually cyclic, or there is a nonabelian free subgroup $\Gamma$ of $\calH$ such that every non-trivial element of $\Gamma$ is atoroidal, and the corresponding free group extension is hyperbolic. A different proof of the aforementioned result of Bestvina--Feighn--Handel is given by Kapovich and Lustig \cite {KL5} who additionally obtained that each nontrivial element is fully irreducible. 
\end{remark}

\begin{remark} We remark that the subgroup $\calH$ isn't necessarily irreducible or it doesn't have to preserve a free splitting of $\FN$. Theorem \ref{maintwo} gives new examples of hyperbolic extensions of free groups, which do not come from previously known constructions. In particular, they are not necessarily convex cocompact \cite{DTcosurface, DTHyp, TT}. 
\end{remark}

The main ingredient in the proof of Theorem \ref{maintwo} is the following dynamical result. See section \ref{n-sdynamics} for definitions. 
\begin{theorem}\label{dynamicsofhyp} Let $\varphi\in \Out(\FN)$ be an atoroidal outer automorphism of a free group of rank $N\ge3$. Then, there exist a simplex of attraction $\Delta_{+}$ and a simplex of repulsion $\Delta_{-}$ in $\PCurr(\FN)$ such that $\varphi$ acts on $\PCurr(\FN)$ with generalized north-south dynamics from $\Delta_{-}$ to $\Delta{+}$. 
\end{theorem}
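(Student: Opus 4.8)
The plan is to combine the completely split relative train track machinery with a direct analysis of how $\varphi$ redistributes the mass of a current over the graph. First I would pass to a power: $\varphi^k$ is atoroidal whenever $\varphi$ is, and the simplices $\Delta_\pm$ will be defined canonically from $\varphi$ (as the projective classes of currents carried by the attracting, resp.\ repelling, laminations, which are $\varphi$-invariant), so generalized north-south dynamics for a suitable power $\varphi^k$ upgrades to the statement for $\varphi$ by the usual interleaving argument together with continuity of $\varphi^r$ on the (fixed) simplices for $r=0,\dots,k-1$. Thus I may assume $\varphi$ has a $\CT$ representative $f\colon G\to G$ with all invariant laminations fixed. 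Since $\varphi$ is atoroidal it is exponentially growing, so $f$ has at least one $\EG$ stratum; let $\Lambda^+_1,\dots,\Lambda^+_m$ be the attracting laminations of $\varphi$ and set $\Delta_+\subseteq\PCurr(\FN)$ to be the set of projective classes of currents whose support lies in $\bigcup_i\Lambda^+_i$, and define $\Delta_-$ symmetrically from $\varphi^{-1}$. The first block of work is to verify that $\Delta_+$ (and $\Delta_-$) is nonempty, compact and $\varphi$-invariant, that it is genuinely a simplex whose extreme points are the finitely many ergodic currents carried by the $\Lambda^+_i$ (the passage from "point" to "simplex", i.e.\ the word \emph{generalized}, is forced by possibly several attracting laminations with distinct expansion factors, and by possibly several ergodic currents on one), and, crucially for the statement to be meaningful, that $\Delta_+\cap\Delta_-=\emptyset$, which follows from the fact that an atoroidal automorphism admits no current carried simultaneously by an attracting and a repelling lamination.

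The heart of the argument is the attraction estimate: if $[\mu]\in\PCurr(\FN)$ is not carried by $\bigcup_i\Lambda^-_i$, then $\varphi^n[\mu]\to\Delta_+$, uniformly on compact subsets of $\PCurr(\FN)\setminus\Delta_-$. I would prove this by decomposing the generic leaf segments of $\mu$ along the strata of $G$ (using complete splittings) and iterating $f$: in each $\EG$ stratum the legal portions grow by the Perron--Frobenius factor while illegal turns are absorbed in bounded neighborhoods via bounded cancellation, so after finitely many iterations an overwhelming proportion of the mass of $\varphi^n_*\mu$ is carried by long legal $\EG$ segments, hence by leaves converging to $\bigcup_i\Lambda^+_i$. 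Two obstructions must be ruled out: (i) mass trapped forever in zero or $\NEG$ strata, and (ii) $\EG$ mass that fails to limit onto the laminations. For (ii) one uses the standard fact that generic (non-illegal-turn) leaf segments of an $\EG$ stratum weakly limit onto the attracting lamination of that stratum. For (i), zero strata are contracted and $\NEG$ strata alone grow only polynomially, so their relative mass tends to $0$ against any genuine $\EG$ growth — and atoroidality is exactly what guarantees that growth is present: with no periodic conjugacy classes and hence no closed Nielsen paths, no $\NEG$/zero configuration can be $\varphi$-periodic, so an $\NEG$ edge $E$ with $f(E)=E\cdot u$ necessarily has $u$ crossing $\EG$ strata below it, whence $f^n(E)=E\,u\,f(u)\cdots f^{n-1}(u)$ acquires exponentially growing mass from its $\EG$ tails. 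Putting (i) and (ii) together, every projective accumulation point of $\varphi^n[\mu]$ is a $\varphi$-fixed projective current carried by $\bigcup_i\Lambda^+_i$, hence lies in $\Delta_+$; by compactness of $\PCurr(\FN)$ this gives $\varphi^n[\mu]\to\Delta_+$.

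Finally I would assemble generalized north-south dynamics from the attraction estimate and a ping-pong/compactness step. After another passage to a power, one produces a neighborhood basis $\{U\}$ of $\Delta_+$ with $\varphi(\overline U)\subseteq\mathrm{int}(U)$ and $\bigcap_n\varphi^n(U)=\Delta_+$ — here one uses that the currents carried by $\bigcup_i\Lambda^+_i$ are precisely the attracting directions, so transverse to the simplex the local picture at $\Delta_+$ is contracting — and symmetrically a basis at $\Delta_-$ for $\varphi^{-1}$. Combined with the fact that every current outside $\Delta_-$ eventually enters each such $U$, a routine compactness argument applied to $\PCurr(\FN)\setminus V$ for $V\supseteq\Delta_-$ yields: for all neighborhoods $U\supseteq\Delta_+$ and $V\supseteq\Delta_-$ there is $M$ with $\varphi^n\bigl(\PCurr(\FN)\setminus V\bigr)\subseteq U$ for $n\ge M$, and the mirror inclusion for $\varphi^{-n}$. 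The main obstacle is the attraction estimate, and within it the bookkeeping showing that under $f$ no positive fraction of the mass of a non-repelling current can remain off the $\EG$ strata forever — this is exactly where atoroidality, through the absence of closed Nielsen paths and periodic conjugacy classes, is indispensable, and it is also the step where care is needed to extract uniform rather than merely pointwise convergence.
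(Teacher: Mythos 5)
Your general framework (a \CT\ representative, complete splittings, bounded cancellation, exponential domination by the expanding/\EG\ part) matches the paper's in spirit, but there is a genuine gap exactly at the step where pointwise attraction is upgraded to the uniform statement, and that is the hard content of the theorem. You assert a trapping neighborhood basis of $\Delta_{+}$, i.e.\ neighborhoods $U$ with $\varphi(\overline U)\subseteq\mathrm{int}(U)$ and $\bigcap_n\varphi^{n}(U)=\Delta_{+}$, justified only by the heuristic that ``transverse to the simplex the local picture is contracting.'' No mechanism is given, and none is obvious: $\Delta_{+}$ is invariant but not pointwise fixed (its extreme points are eigencurrents with possibly different eigenvalues, and interior points of the simplex drift toward faces), and currents projectively close to $\Delta_{+}$ can have an arbitrarily small ``good'' proportion in the sense needed for the growth estimates, so the claim that nothing near $\Delta_{+}$ escapes is essentially equivalent to the uniform attraction you are trying to prove. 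Likewise, your central ``attraction estimate'' is stated for an arbitrary current $[\mu]\notin\Delta_{-}$ and argued by ``decomposing generic leaf segments of $\mu$''; for non-rational currents this bookkeeping is never carried out, and the bounded-cancellation/complete-splitting estimates are directly available only for circuits, i.e.\ for rational currents. (Your assertion that $\Delta_{+}\cap\Delta_{-}=\emptyset$, and the claim that a superlinear \NEG\ edge must acquire \EG\ growth below it, are also left unproved, though these are smaller points.)

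The paper's architecture is designed to avoid both problems. It defines $\Delta_{\pm}$ concretely as the simplices spanned by the limit-frequency currents of expanding splitting units, obtained by reading the \CT\ as a substitution (Proposition \ref{freqconverge}); it then develops a goodness function $\g$ for circuits and proves: high goodness propagates and improves uniformly (Lemmas \ref{gettingbetter}, \ref{improvinggoodness}), high goodness pushes $\eta_{w}$ into any prescribed neighborhood of $\Delta_{+}$ uniformly (Lemma \ref{goforward}), and, crucially, a forward/backward dichotomy: for every conjugacy class $w$ and all uniformly large $t$, either $\g(\varphi^{t}(w))\ge\delta$ or $\g'(\varphi^{-t}(w))\ge\delta$ (Lemmas \ref{goodbaddichotomy} and \ref{futurepast}). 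The criterion of Proposition \ref{convergence-criterion} (from \cite{LU2}) then converts precisely this ``either $f^{m}(y)\in U$ or $f^{-m}(y)\in V$'' statement on the dense set of rational currents into generalized uniform north--south dynamics on all of $\PCurr(\FN)$, and Proposition \ref{NS-for-roots} removes the passage to a power. Note that uniform forward attraction on the dense set itself is false (rational currents may lie arbitrarily close to $\Delta_{-}$), which is why the either/or criterion, rather than your compactness-plus-trapping argument, is the tool that actually closes the uniformity gap; to repair your proposal you would need to prove your trapping-neighborhood claim or replace it by such a criterion, and make the mass-redistribution estimates precise either for arbitrary currents or by reducing to rational ones.
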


The space $\PCurr(\FN)$ of \emph{projectivized geodesic currents} contains positive multiples of conjugacy classes as a dense subset, and hence serves as a natural tool for detecting atoroidal outer automorphisms, see section \ref{currents} for details. The proof of Theorem \ref{dynamicsofhyp} builds on our earlier results with M. Lustig about dynamics of reducible substitutions \cite{LU1} and is modeled on the proof of the specific case where both $\varphi$ and $\varphi^{-1}$ admit absolute train track representatives as treated in \cite{LU2}. In this paper, we use completely split relative train track maps ($\CT's$) which are particularly nice topological representatives introduced by Feighn and Handel \cite{FH}. The new key insight in the proof of Theorem \ref{dynamicsofhyp} is to use the legal structure coming from the splitting units in the $\CT$ that represents $\varphi\in\Out(\FN)$ rather than using the classical legal structure coming from the edges. 

\medskip

As a byproduct of Theorem \ref{dynamicsofhyp} we also obtain:

\begin{corollary}\label{irreduciblenotcoco} Let $\varphi\in\Out(\FN)$ be a fully irreducible and atoroidal outer automorphism. Then for any atoroidal outer automorphism $\psi\in\Out(\FN)$ (not necessarily fully irreducible) which is not commensurable with $\varphi$ (i.e. $\varphi^{t}\neq\psi^{s}$ for any $s,t$), there exists an exponent $M>0$ so that for all $n,m>M$,  the subgroup $\Gamma=\langle\varphi^{n},\psi^m\rangle<\Out(\FN)$ is purely atoroidal and the corresponding extension $E_{\Gamma}$ is hyperbolic. 
\end{corollary}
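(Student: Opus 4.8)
The plan is to combine the generalized north–south dynamics for atoroidal automorphisms on $\PCurr(\FN)$ (Theorem \ref{dynamicsofhyp}) with a ping-pong argument, exactly in the spirit of the classical Tits-alternative-type constructions, and then feed the resulting free group into a hyperbolicity criterion for $E_\Gamma$. First I would recall that $\varphi$ fully irreducible and atoroidal acts on $\PCurr(\FN)$ with honest (uniform) north–south dynamics between two points $[\mu_+], [\mu_-]$ (here the attracting/repelling simplices are single points because $\varphi$ is fully irreducible), while the general atoroidal $\psi$ acts with generalized north–south dynamics from a simplex of repulsion $\Delta_-^\psi$ to a simplex of attraction $\Delta_+^\psi$ by Theorem \ref{dynamicsofhyp}. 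The non-commensurability hypothesis should be leveraged to guarantee that $\{[\mu_+],[\mu_-]\}$ is disjoint from $\Delta_+^\psi \cup \Delta_-^\psi$: if, say, $[\mu_+]$ lay in $\Delta_+^\psi$, then since $\mu_+$ is the unique $\varphi$-fixed current and the simplices are permuted by $\psi$, one would extract a common power fixing a projective current whose support is a lamination invariant under both, and fully irreducibility of $\varphi$ would then force $\psi$ to share the stable/unstable lamination of a power of $\varphi$, hence (being atoroidal with the same attracting lamination) be commensurable with $\varphi$ — contradiction. This disjointness is the one genuinely delicate point and I expect it to be the main obstacle; it may require invoking the structure of the canonical index/attracting lamination list of $\psi$ and the fact that a fully irreducible lamination cannot properly contain or be properly contained in another without forcing an algebraic coincidence.

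Once disjointness of the four (systems of) fixed points is established, the ping-pong is routine: choose disjoint compact neighborhoods $U_\pm \supset [\mu_\pm]$ and $V_\pm \supset \Delta_\pm^\psi$ in $\PCurr(\FN)$, all pairwise disjoint. By (generalized) north–south dynamics there is $M>0$ such that for all $n>M$, $\varphi^{\pm n}$ maps the complement of $U_\mp$ into $U_\pm$, and for all $m>M$, $\psi^{\pm m}$ maps the complement of $V_\mp$ into $V_\pm$. The ping-pong lemma then yields that $\Gamma = \langle \varphi^n, \psi^m\rangle \cong \F_2$ and, more importantly, that every nontrivial $g \in \Gamma$ acts on $\PCurr(\FN)$ with north–south-type dynamics: its attracting fixed set lies in $U_+ \cup U_- \cup V_+ \cup V_-$ and in particular $g$ has no fixed \emph{rational} projective current unless that rational current already lies in one of these neighborhoods and is fixed by a power of $\varphi$ or $\psi$. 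Since $\varphi$ and $\psi$ are atoroidal, no power of either fixes a conjugacy class, so no power of $g$ fixes a conjugacy class either; hence $\Gamma$ is purely atoroidal. (Alternatively, one argues directly: a nontrivial element fixing a conjugacy class $[c]$ would fix $[\eta_c] \in \PCurr(\FN)$, but the ping-pong dynamics show the only invariant projective currents of nontrivial elements are the images under $\Gamma$ of $[\mu_\pm]$ and points of $\Delta_\pm^\psi$, which are not rational since $\varphi,\psi$ are atoroidal.)

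Finally I would invoke the Dowdall–Taylor hyperbolicity theorem: since $\varphi$ is fully irreducible, for $n$ large $\langle \varphi^n\rangle$ is a quasi-isometrically embedded cyclic subgroup of the free factor complex $\mathcal{F}$, and by a standard Bestvina–Feighn bounded-geodesic-image / ping-pong argument in $\mathcal F$ (using that $\varphi$ is fully irreducible so it acts loxodromically on $\mathcal F$ with well-positioned axis, and choosing $m$ large so that $\psi^m$ moves the $\varphi^n$-axis far off itself) the orbit map $\Gamma \to \mathcal F$ is a quasi-isometric embedding. Combined with pure atoroidality established above, \cite{DTHyp} then gives that $E_\Gamma$ is hyperbolic. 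The one subtlety here is that $\psi$ need not be fully irreducible, so it need not act loxodromically on $\mathcal F$; but for the quasi-isometric embedding one only needs $\varphi^n$ to be loxodromic with a stable axis and $\psi^m$ to displace it sufficiently, which follows from ping-pong in $\mathcal F$ together with the fact that $\psi$ does not fix the attracting/repelling points of $\varphi$ on $\partial\mathcal F$ — again a consequence of non-commensurability, proved the same way as the disjointness in the first paragraph. Assembling these pieces, with $M$ taken to be the maximum of the thresholds coming from the ping-pong in $\PCurr(\FN)$ and in $\mathcal F$, completes the proof.
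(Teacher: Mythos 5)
Your setup (disjointness of $\{[\mu_{\pm}(\varphi)]\}$ from $\Delta_{\pm}(\psi)$ via non-commensurability and the virtually cyclic stabilizer of $\{[\mu_+],[\mu_-]\}$, followed by ping-pong with the generalized north--south dynamics of Theorem \ref{dynamicsofhyp}) is in the spirit of the paper's argument. But your final step is wrong in an essential way: you cannot obtain hyperbolicity of $E_\Gamma$ from the Dowdall--Taylor criterion, because its hypothesis fails here. Since $\psi$ is not fully irreducible, $\psi^m$ is not a loxodromic isometry of the free factor graph $\fF(\FN)$ (the loxodromics are exactly the fully irreducible elements), so the cyclic subgroup $\langle\psi^m\rangle$ has bounded orbit in $\fF(\FN)$ while being undistorted in $\Gamma\cong \F_2$; hence the orbit map $\Gamma\to\fF(\FN)$ is never a quasi-isometric embedding, no matter how large $n,m$ are. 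Your attempted fix (``one only needs $\varphi^n$ loxodromic and $\psi^m$ to displace its axis'') does not address this, since a quasi-isometric embedding must make linear progress on \emph{every} cyclic subgroup, in particular on $\langle\psi^m\rangle$. Indeed the paper states explicitly, right after Corollary \ref{irreduciblenotcoco}, that for these $\Gamma$ the orbit map to the free factor graph is \emph{not} a quasi-isometric embedding; the whole point of the corollary is to produce hyperbolic extensions outside the convex cocompact framework of \cite{DTHyp}.

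The paper's route is different: after the disjointness and the choice of high powers, it invokes Proposition \ref{flare}, which uses the generalized north--south dynamics to show that for every current $\mu$ at least three of the four elements $\varphi^{\pm n},\psi^{\pm m}$ stretch $|\mu|_G$ by a factor of $2$; this flaring condition, fed into the Bestvina--Feighn combination theorem as in the proof of Theorem 5.2 of \cite{BFH97} (in the formulation of \cite{KL5}), gives hyperbolicity of $E_\Gamma$ and, with it, pure atoroidality of $\Gamma$. Relatedly, your ping-pong argument for pure atoroidality is under-justified as stated: ping-pong only confines a fixed point of a nontrivial $g\in\Gamma$ to the chosen neighborhoods, and rational currents are dense, so the claim that the only $g$-invariant projective currents are translates of $[\mu_\pm(\varphi)]$ and points of $\Delta_{\pm}(\psi)$ requires an additional quantitative argument; the flaring condition (or the observation that a fixed conjugacy class would produce a $\ZZ^2$ in the hyperbolic group $E_\Gamma$) closes this gap, whereas your route as written leaves both the atoroidality and, more seriously, the hyperbolicity step unproven.
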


Note that the subgroup $\Gamma$ in Corollary \ref{irreduciblenotcoco} is irreducible, and since $\Gamma$ is not purely fully irreducible the orbit map to the free factor graph is not a quasi-isometric embedding \cite{DTHyp}. 
\medskip

\noindent\textbf{Acknowledgments:} 
The author is grateful to Martin Lustig for useful discussions that helped form the core of this paper. He is grateful to his advisors Chris Leininger and Ilya Kapovich for support, encouragement and feedback. He also thanks Mladen Bestvina and Spencer Dowdall for useful discussions. Finally, he thanks the anonymous referees for useful comments and corrections. 

\section{Preliminaries}\label{prelim}

\subsection{Graphs and graph maps}

 A \emph{graph} $G$ is a $1$-dimensional cell complex, where $0$-cells are called \emph{vertices} and $1$-cells are called topological edges. We denote the set of vertices by $VG$, and the set of topological edges by $E_{top}G$. Identifying the interior of an edge with the open interval $(0,1)$ each edge admits exactly two orientations. We denote the set of \emph{oriented} edges by $EG$. Choosing an orientation on each edge splits the set $EG$ into two disjoint sets:  the set $E^{+}G$ of positively oriented and the set $E^{-}G$ of negatively oriented edges. Given an oriented edge $e\in EG$, the initial vertex of $e$ is denoted by $o(e)$ and the terminal vertex of $e$ is denoted by $t(e)$, and the edge with the opposite orientation is denoted by $e^{-1}$. 

An  \emph{edge path} $\gamma$ in $G$ is a concatenation $\gamma=e_1e_2\ldots e_n$ of edges in $G$ such that $t(e_{i-1})=o(e_i)$ for all $i=2, \ldots n$. An edge path $\gamma=e_1e_2\ldots e_n$ is called \emph{reduced (or tight)} if $e_{i}^{-1}\neq e_{i+1}$ for all $i=1,\ldots n-1$. A reduced edge path $\gamma=e_1e_2\ldots e_n$ is called cyclically reduced if $o(\gamma)=t(\gamma)$ and in addition $e_{n}^{-1}\neq e_1$. We call cyclically reduced edge paths \emph{circuits}.  

Given an edge path $\gamma$ we denote the reduced edge path obtained by a homotopy relative to endpoints of $\gamma$ by $[\gamma]$. 

\subsection{Markings and topological representatives}

Let $\RN$ denote the rose with $N$ pedals, which is the finite graph with one vertex and $N$ loop edges attached to that vertex. A \emph{marking} is a homotopy equivalence $m:\RN\to G$ where $G$ is a finite graph all of whose vertices are at least valence $2$. 

A homotopy equivalence $f:G\to G$ is a \emph{(topological) graph map} if it sends vertices to vertices, and its restriction to the interior of an edge is an immersion. Let $m':G\to\RN$ be a homotopy inverse to the marking $m:\RN\to G$. We say that a topological graph map is a \emph{topological representative} of an outer automorphism $\varphi\in\Out(\FN)$ if the induced map satisfies $(m'\circ f \circ m)_{\ast}:\FN\to\FN=\varphi$.  

A \emph{filtration} for a topological representative $f:G\to G$ is an ascending sequence of $f$-invariant subgraphs $\emptyset=G_0\subset G_1\subset \ldots\subset G_k=G$. The closure of $G_r\setminus G_{r-1}$ is called the \emph{$r^{th}$-stratum}, and is denoted by $H_r$. 

For each stratum $H_r$, there is an associated \emph{transition matrix} $M_{r}$ of $H_{r}$ which is a non-negative integer square matrix. The ${ij}^{th}$ entry of $H_{r}$ records the number of times $[f(e_i)]$ crosses $e_j$ or $e_j^{-1}$. A non-negative square matrix $M$ is called \emph{irreducible} if for each $i,j$, there exists $k=k(i,j)$ such that $M^{k}_{ij}>0$, the matrix $M$ is called \emph{primitive} if $k$ can be chosen independent of $i,j$. The stratum $H_r$ is called \emph{irreducible (resp. primitive)} if and only if $M_r$ is irreducible (resp. primitive). If $M_r$ is irreducible then it has a unique eigenvalue $\lambda\ge1$ called the Perron-Frobenius $(\PF)$ eigenvalue , for  which  the associated eigenvector is positive.  We say that $H_r$ is an \emph{exponentially growing stratum} or $\EG$ stratum if $\lambda>1$ and \emph{non-exponentially growing stratum} or $\NEG$ stratum if $\lambda=1$. We say that $H_r$ is a \emph{zero stratum} if $M_r$ is the zero matrix.

\subsection{Train track maps}

We first set up the relevant terminology to define relative train track maps, and their strengthened versions $\CT$'s. The standard resources for this section are \cite{BH92, BFH00, FH}. 

Let $f:G\to G$ be a topological graph map. A \emph{direction} at a point $v\in G$ is the germ of an initial segment of an oriented edge. The map $f:G\to G$ induces a natural \emph{derivative map} $Df$ on the set of germs, and we say that a direction is \emph{fixed} or \emph{periodic} if it is fixed or periodic under the derivative map. A \emph{turn} in $G$ is an unordered pair of directions. We say that a turn is \emph{degenerate} if the two directions are the same, and \emph{non-degenerate} otherwise. A turn is called \emph{illegal} if its image under some iterate of $Df$ is degenerate, otherwise a turn is called \emph{legal}. An edge path $\gamma=e_1e_2\ldots e_k$ is called legal if each turn $(e_i^{-1},e_{i+1})$ is legal. We say that, a turn is contained in a stratum $H_r$ if both directions are contained in $H_r$. An edge path $\gamma$ is called \emph{$r$-legal}, if every turn in $\gamma$ that is contained in $H_r$ is legal. If $H_r$ is an $\EG$ stratum, and $\gamma$ is a path whose endpoints are in $H_r\cap G_{r-1}$, then $\gamma$ is called a \emph{connecting path}. 

\begin{definition} A homotopy equivalence $f:G\to G$ representing $\varphi\in\Out(\FN)$ is called a \emph{relative train track} map, if for every exponentially growing stratum $H_r$ the following hold:
\begin{enumerate}
\item[(RTT-i)]  $Df$ maps the set of directions in $H_r$ to itself. 
\item[(RTT-ii)] For each connecting path $\gamma$ for $H_r$, $[f(\gamma)]$ is a connecting path for $H_r$. In particular, $[f(\gamma)]$ is nontrivial. 
\item[(RTT-iii)] If $\gamma$ is $r$-legal, then $[f(\gamma)]$ is $r$-legal. 
\end{enumerate}
\end{definition}

\begin{definition}[Nielsen paths]A path $\rho$ is a \emph{periodic Nielsen path} if there is an exponent $k\ge1$ such that $[f^{k}(\rho)]=\rho$. The mimimal such $k$ is called the \emph{period}, and if $k=1$ then $\rho$ is called a \emph{Nielsen path}. A periodic Nielsen path is called \emph{indivisible} if it cannot be written as a concatenation of periodic Nielsen paths. We will denote the (periodic) indivisible Nielsen paths by ($\pINP$) $\INP$'s. 
\end{definition}

\begin{definition}[Taken and exceptional paths] A path $\gamma\in G$ is called \emph{r-taken} by $f:G\to G$ if $\gamma$ appears as a subpath of $f^{k}(e)$ for some $k\ge1$ and for some edge $e\in H_r$ in an irreducible stratum. We will drop $r$ and only say \emph{taken} whenever $r$ is irrelevant.  Let $e_i, e_j$ be linear edges as defined in Definition \ref{NEGedges} below such that $f(e_i)=e_iw^{m_i}$ and $f(e_j)=e_jw^{m_j}$ for some root free Nielsen path $w$. Then a path of the form $e_iw^{p}e_j^{-1}$ for $p\in\ZZ$ is called an \emph{exceptional path}. 
\end{definition}

\begin{definition}[Splittings and complete splittings] Let $f: G\to G$ be a relative train track map. A decomposition of a path $\gamma$ in $G$ into subpaths $\gamma=\gamma_1\cdot\gamma_2\cdot\ldots\cdot\gamma_m$ is called a \emph{splitting} if $[f^{k}(\gamma)]=[f^{k}(\gamma_1)][f^{k}(\gamma_2)]\ldots[f^{k}(\gamma_m)]$. Namely, one can tighten the image $f^{k}(\gamma)$ by tightening the images of the subpaths $\gamma_i$. We use the ``$\cdot$'' notation for splittings. 

A splitting $\gamma=\gamma_1\cdot\gamma_2\cdot\ldots\cdot\gamma_m$ is called a \emph{complete splitting} if each term $\gamma_i$ is one of the following:
\begin{enumerate}
\item an edge in an irreducible stratum
\item an $\INP$
\item an exceptional path
\item a connecting path in a zero stratum that is both maximal and taken
\end{enumerate} 

The paths in the above list are called \emph{splitting units}. 
\end{definition}

\begin{lemma}\cite{BFH00, FH} Every completely split path or circuit has a unique complete splitting.
\end{lemma}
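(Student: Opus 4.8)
The plan is to reduce the statement to the uniqueness of the \emph{first} splitting unit and then settle that by a short case analysis whose engine is iteration of $f$; I will describe the argument for a path, the circuit case being similar. Suppose a completely split path $\gamma$ has two complete splittings. Comparing their sets of subdivision points and passing to the subpath of $\gamma$ from the last common subdivision point up to the first point at which the two sets differ, one is reduced to the following local claim: \emph{if a completely split path $\delta$ has one complete splitting beginning with a splitting unit $u$ and another beginning with a splitting unit $u'$, then $u$ is not a proper initial subpath of $u'$.} Granting this, any two complete splittings of $\gamma$ have the same first unit -- two initial subpaths of a reduced path are comparable, so distinct first units would force one to be a proper initial subpath of the other -- and removing this unit leaves a shorter completely split path (the common tail of the two splittings), so induction on the number of edges of $\gamma$ finishes the proof.

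To prove the local claim, assume $u$ is a proper initial subpath of $u'$ and examine the first edge of $u'$. If $u'$ is a single edge of an irreducible stratum it has no nontrivial proper initial subpath, so this cannot happen. If $u'$ is a maximal taken connecting path in a zero stratum $H_s$, then $u$ begins with an edge of $H_s$; since in a $\CT$ the only splitting unit beginning with a zero-stratum edge is a connecting path in that zero stratum, $u$ is such a path, contradicting that $u$ is maximal in $\delta$. If $u'$ is an $\INP$ of $\EG$-height $r$, then $u$ begins with an edge of $H_r$, so $u$ is a single edge $e$ of $H_r$ or an $\INP$ of $\EG$-height $r$. If $u'$ is an exceptional path $e_iw^pe_j^{-1}$, then comparing heights of the edges involved forces $u=e_i$.

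It remains to rule out these three configurations by iterating $f$. If $u=e$ is a single edge of the $\EG$ stratum $H_r$: by (RTT-iii) each $[f^k(e)]$ is $r$-legal, and $|[f^k(e)]|\to\infty$ because $M_r$ has $\PF$-eigenvalue larger than $1$, while $[f^k(u')]=u'$ once $k$ is a multiple of the period of $u'$; as $\delta=e\cdot(\cdots)$ is a splitting, $[f^k(\delta)]$ begins with $[f^k(e)]$, so for large $k$ the path $u'$ is an initial subpath of the $r$-legal path $[f^k(e)]$ and hence $r$-legal, contradicting the fact that an $\EG$-height $\INP$ contains an illegal turn in its top stratum. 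If $u$ and $u'$ are both $\INP$'s of $\EG$-height $r$: each has a unique illegal turn in $H_r$, and since $u$ lies inside $u'$ these coincide, so the overlap-cancellation pattern of an $\INP$ forces $u'=u\cdot\epsilon$ with $\epsilon$ a nontrivial Nielsen path -- contradicting indivisibility of $u'$. Finally, if $u=e_i$ and $u'=e_iw^pe_j^{-1}$: since $\delta=e_i\cdot(\cdots)$ is a splitting, $[f^k(\delta)]$ begins with $[f^k(e_i)]=e_iw^{km_i}$ with no cancellation following, while the exceptional-path splitting pins down (linearly in $k$) the number of copies of $w$ occurring between $e_i$ and the next $e_j^{-1}$ in $[f^k(\delta)]$; since $m_i\ne m_j$ these are incompatible for $k$ large, after checking orientations and using that in a $\CT$ the $f$-image of a splitting unit cannot begin with arbitrarily many copies of a fixed Nielsen path.

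The crux is the second paragraph: pinning down, for each type of $u'$, exactly which splitting units can occur as a proper initial subpath uses the fine structure of $\CT$'s -- that an $\INP$ of $\EG$-height $r$ begins and ends in $H_r$ with a unique illegal turn there, that the axis of a linear edge has strictly smaller height, and that zero-stratum edges occur only inside zero-stratum connecting paths. The needed structural input is in \cite{BH92, BFH00, FH}; once the configurations are isolated, the iteration arguments above are routine.
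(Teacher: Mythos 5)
The paper does not prove this lemma at all: it is quoted from \cite{BFH00,FH} (it is Lemma 4.11 of \cite{FH}), so the only meaningful comparison is with the standard argument, which does run along your lines -- reduce to uniqueness of the \emph{first} splitting unit, then a case analysis on its type using the fine structure of $\CT$'s plus iteration of $f$. Your reduction is fine, and so are the cases where $u'$ is a single edge, a maximal zero-stratum connecting path (using that no Nielsen path or exceptional path crosses a zero-stratum edge), and where $u$ is an $\EG$ edge inside an $\EG$-height $\INP$ (the $r$-legality of $[f^k(e)]$ versus the illegal turn of $u'$ is exactly right). Your $\INP$-inside-$\INP$ case glosses over why the leftover piece $\varepsilon$ satisfies $[f(\varepsilon)]=\varepsilon$ (this needs the small no-cancellation argument: otherwise $[f(\varepsilon)]$ would fail to be reduced), but in a $\CT$ this case is in any event immediate from a property the paper itself lists: an $\EG$ stratum $H_r$ carries at most one $\INP$ $\rho_r$ of height $r$, and the initial edges of $\rho_r$ and $\rho_r^{-1}$ are distinct, so two height-$r$ $\INP$'s with the same first edge coincide and neither can be a proper prefix of the other.

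There are, however, two genuine gaps. First, your case analysis on $u'$ omits $\INP$'s of $\NEG$ height, which in a general $\CT$ have the form $e_\ell w^{d}e_\ell^{-1}$ for a linear edge $e_\ell$; since you allow exceptional paths (hence linear edges), these must be treated both as $u'$ and as a candidate $u$, by the same kind of growth computation as in your exceptional case. (They are absent in the atoroidal setting of this paper, where there are no closed Nielsen paths, but the lemma you are proving is the general one.) Second, in the exceptional-path case the contradiction is only asserted: ``$m_i\neq m_j$'' is neither part of the definition nor the operative point, and the principle you invoke -- that the $f$-image of a splitting unit cannot begin with arbitrarily many copies of a fixed Nielsen path -- is false as stated, since $[f^k(e_i)]=e_iw^{km_i}$ does exactly that. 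The correct mechanism is: the first splitting gives $[f^k(\delta)]$ the reduced prefix $e_iw^{km_i}$, the second gives it the reduced prefix $e_iw^{\,p+k(m_i-m_j)}e_j^{-1}$; for $k$ large the latter is no longer than the former, so $e_j^{-1}$ would have to coincide with an edge of $w$, which is impossible because $w$, being the axis of the linear edge $e_j$, lies in $G_{j-1}$ and never crosses $e_j$. Writing this out (and its analogue for $\NEG$ $\INP$'s) is what actually closes the proof.
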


The properties of relative train track maps are not strong enough for our purposes. Hence, in order to study the dynamics of atoroidal outer automorphisms, we utilize \emph{completely split train track maps} ($\CT$'s) introduced by Feighn and Handel. In what follows, rather than giving the defining properties of $\CT$'s we will list the relevant properties of $\CT$'s and cite the appropriate resources. We refer reader to \cite{FH} for a detailed discussion of $\CT$'s. We begin with two definitions:

\begin{definition}\label{freefactors} A subgroup $\F<\FN$ is called a \emph{free factor} of $\FN$ if there is another subgroup $\F'<\FN$ such that $\F\ast \F'=\FN$. We denote the conjugacy class of a free factor $\F$ with $[\F]$.  A \emph{free factor system} is a collection $\calF=\{[\F^{1}],\ldots, [\F^{k}]\}$ of conjugacy classes of free factors of $\FN$ such that there exists $\F'<\FN$ (possibly trivial) with the property that $\FN=\F^{1}\ast\ldots\ast \F^k\ast \F'$. There is a partial order on the set of free factor systems as follows: Given two free factor systsems $\calF=\{[\F^{1}],\ldots, [\F^{k}]\}$ and $\calF'=\{[\F'^{1}],\ldots, [\F'^{l}]\}$ we say that $\calF\sqsubset\calF'$ if for each $[\F^{i}]\in\calF$ there exists  $[\F'^{j}]\in\calF'$ such that $g\F^{i}g^{-1}<F'^{j}$ for some $g\in\FN$. 
\end{definition}

The \emph{free factor graph} $\fF(\FN)$ is the (infinite) graph whose vertices correspond to conjugacy classes of proper free factors, and there is an edge between $[\F]$ and $[\F']$ if either $\F<g\F'g^{-1}$ or $\F'<g\F g^{-1}$ for some $g\in\FN$. By declaring the length of each edge $1$, $\fF(\FN)$ is equipped with a path metric $d$, and a result of Bestvina and Feighn says that $\fF(\FN)$ is hyperbolic \cite{BF14}. The group $\Out(\FN)$ acts on $\fF(\FN)$ with simplicial isometries and fully irreducible elements are precisely the loxodromic isometries \cite{BF14}. 

\begin{definition}For any marked graph $G$ and a subgraph $K$ of $G$ the fundamental group of the non-contractible components of $K$ determines a free factor system $[\pi_1(K)]=\mathcal{F}$ of $\FN$. We say that $K$ \emph{realizes} $\mathcal{F}$. Given a nested sequence $\mathcal{C}$ of free factor systems $\calF^1\sqsubset\calF^2\sqsubset\ldots\sqsubset\calF^n$ we say that $\mathcal{C}$ is \emph{realized} by a relative train-track map $f:G\to G$ if there is an $f$-invariant filtration $\emptyset=G_0\subset G_1\subset \ldots\subset G_k=G$ such that or all $1\le i\le n$ we have $\calF^i=[\pi_1(G_{k(i)})]$ for some $k(i)$. 
\end{definition}

The following theorem is the main existence result about $\CT$'s:

\begin{theorem}\cite[Theorem 4.28, Lemma 4.42]{FH}\label{CTsExist} There exists a uniform constant $M=M(N)\ge1$ such that for any $\varphi^{M}\in \Out(\FN)$, and any nested sequence $\mathcal{C}$ of $\varphi^{M}$-invariant free factor systems, there exists a $CT$ $f:G\to G$ that represents $\varphi^{M}$ and realizes $\mathcal{C}$. 
\end{theorem}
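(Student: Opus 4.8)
The plan is to deduce the statement from Feighn--Handel's construction of completely split relative train track maps for \emph{rotationless} outer automorphisms, together with a uniform bound on the power needed to render an arbitrary element of $\Out(\FN)$ rotationless.

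First I would produce the uniform constant $M=M(N)$. Recall that $\varphi$ is rotationless when, for one (equivalently any) relative train track representative $g\colon G\to G$, every periodic direction is fixed, every periodic Nielsen path is a Nielsen path, and the principal automorphism data of $\varphi$ is fixed pointwise. Starting from any relative train track representative of $\varphi$, which exists by \cite{BH92, BFH00}, one observes that the periods of all periodic directions, of all periodic Nielsen paths, and of the twist coordinates of the linear $\NEG$ edges are bounded by an explicit function of the number of strata and edges of $G$, hence by a function of $N$ alone; the remaining finite-order behaviour of $\varphi$, visible on $H_1(\FN;\ZZ/3)$, has order dividing $|\GL_N(\ZZ/3)|$. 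Taking $M$ to be a common multiple of these bounds makes $\varphi^M$ rotationless for every $\varphi\in\Out(\FN)$, with $M$ depending only on $N$. It therefore suffices to prove the remaining assertion for a rotationless $\psi\in\Out(\FN)$ and an arbitrary nested sequence $\mathcal C=(\calF^1\sqsubset\cdots\sqsubset\calF^n)$ of $\psi$-invariant free factor systems.

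Second I would build the $\CT$ for $\psi$ realizing $\mathcal C$. Start from a relative train track representative $f\colon G\to G$ of $\psi$ whose filtration already realizes $\mathcal C$, obtained by refining the filtration of a relative train track map so that each $\calF^i$ occurs as $[\pi_1(G_{k(i)})]$, and then improve $f$ to a $\CT$ by a finite sequence of elementary operations --- subdivision at periodic points, folding, collapsing invariant forests, sliding, and adjustment of the zero strata --- each of which preserves being a relative train track representative of $\psi$ and preserves the realization of $\mathcal C$. The $\CT$ axioms are then installed essentially one stratum at a time: make the filtration reduced; make every vertex principal and eliminate the non-principal valence-one and valence-two vertices; arrange that no periodic direction is inverted and that the fixed edges span a linear subgraph; put the linear edges into the standard form $f(e_i)=e_iw^{m_i}$ for a fixed root-free Nielsen path $w$; arrange that each $\EG$ stratum carries at most one indivisible Nielsen path; and make each zero stratum contractible with every edge lying in a maximal taken connecting path. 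The payoff is that after all this every edge of an irreducible stratum, every exceptional path, and every maximal taken connecting path in a zero stratum is a splitting unit, so every circuit admits a complete splitting and $f$ is a $\CT$.

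The main obstacle is the bookkeeping required to carry out these improvements simultaneously: an operation that installs one axiom --- collapsing an invariant forest to reduce the filtration, or folding to destroy an illegal turn --- can reintroduce the failure of an axiom already arranged for a lower stratum, and must also be checked not to disturb the filtration realizing $\mathcal C$. The remedy is a downward induction on strata together with an auxiliary complexity --- built from the number of edges, the number of illegal turns, and the lengths of the periodic Nielsen paths --- that strictly decreases under each relevant move, so the process terminates at a genuine $\CT$. Verifying that this complexity behaves as required, and that each move respects the prescribed filtration, is the technical heart of \cite{FH} and is where essentially all the work lies.
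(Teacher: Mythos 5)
This statement is quoted from Feighn--Handel and the paper offers no argument beyond the citation \cite[Theorem 4.28, Lemma 4.42]{FH}, so the only comparison available is with the cited source. Your sketch follows exactly that route --- a uniform power $M(N)$ making every element rotationless (the content of Lemma 4.42/4.43 of \cite{FH}) combined with the existence of a $\CT$ for a rotationless automorphism realizing any nested invariant sequence of free factor systems (Theorem 4.28 of \cite{FH}) --- and is correct in outline, matching the intended proof.
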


We now state several results about structures of paths in $\CT$'s that will be relevant in the discussion follows. 

\begin{lemma}\cite[Lemma 4.21]{FH} If $f:G\to G$ is a $\CT$, then every $\NEG$ stratum $H_r$ consists of a single edge $e_i$. Moreover, either $e_i$ is fixed, or $f(e_i)=e_i\cdot u_i$ where $u_i$ is a nontrivial, completely split circuit in $G_{i-1}$. 
\end{lemma}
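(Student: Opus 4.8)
The plan is to combine an elementary Perron--Frobenius observation with the defining axioms of a $\CT$ from \cite{FH}. The starting point is the transition matrix $M_r$ of $H_r$. Since $H_r$ is an $\NEG$ stratum, $M_r$ is irreducible with $\PF$ eigenvalue $1$, and I would first record that any non-negative \emph{integral} irreducible matrix of spectral radius $1$ is a permutation matrix: if $v>0$ satisfies $M_r v=v$ and $c_j\ge 1$ denotes the $j$-th column sum (positive by irreducibility), then $\sum_i v_i=\sum_{i,j}(M_r)_{ij}v_j=\sum_j c_j v_j$, so $\sum_j (c_j-1)v_j=0$ forces every column sum to equal $1$; combined with the absence of zero rows this makes $M_r$ a permutation matrix. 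Hence, after tightening, $f$ carries each edge of $H_r$ over exactly one edge of $H_r$ exactly once, so $[f(e)]=\alpha_e\cdot e'\cdot\beta_e$ for some permutation $e\mapsto e'$ of the edges of $H_r$ and (possibly trivial) subpaths $\alpha_e,\beta_e$ of $G_{r-1}$.

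Next I would show that the permutation is trivial and that there is no $G_{r-1}$-prefix, i.e.\ that $[f(e)]=e'\cdot\beta_e$ with $\beta_e\subset G_{r-1}$. This is the point where the $\CT$ structure (rather than a bare relative train track map) is needed: the $\NEG$-related axioms of a $\CT$ prevent the cancellation that could otherwise move part of $\beta_e$ to the front under iteration, and force the crossing of $H_r$ to occur at the very beginning of $[f(e)]$. Once this is known, suppose the permutation had a cycle of length $m\ge 2$ through some edge $e$. Then $Df^m$ fixes the initial direction $d$ of $e$, so $d$ is a periodic direction; since a $\CT$ is rotationless, $d$ must in fact be fixed, whence $e'=e$, contradicting $m\ge 2$. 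Therefore $[f(e)]=e\cdot\beta_e$ for every edge $e$ of $H_r$.

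The next step is to conclude that $H_r$ is a single edge. If $H_r$ had at least two edges, then --- since each satisfies $[f(e)]=e\cdot\beta_e$ with $\beta_e\subset G_{r-1}$ --- adjoining any one of them to $G_{r-1}$ would yield a strictly larger $f$-invariant subgraph, i.e.\ the filtration could be refined inside $H_r$; but the filtration of a $\CT$ is reduced and admits no such refinement, so $H_r=\{e_i\}$ is a single edge. Because $f$ restricts to an immersion on the interior of $e_i$, the edge path $f(e_i)$ is already reduced, so $f(e_i)=[f(e_i)]=e_i\cdot u_i$ with $u_i\subset G_{i-1}$.

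It remains to verify the stated properties of $u_i$, assuming $e_i$ is not fixed. The path $u_i$ is a circuit: it begins at $t(e_i)$, and its terminal vertex equals $t(f(e_i))=f(t(e_i))=t(e_i)$, since in a $\CT$ the terminal endpoint of a non-fixed $\NEG$ edge is a principal (hence fixed) vertex. It is nontrivial, for otherwise $f(e_i)=e_i$ and $e_i$ would be fixed. And $u_i$ is completely split: by the ``completely split'' axiom $f(e_i)$ has a complete splitting, and as $e_i$ is a splitting unit occurring at the front of $f(e_i)=e_i\cdot u_i$, uniqueness of complete splittings forces $u_i$ to be completely split as well. The step I expect to be the main obstacle is the one in the second paragraph --- excluding a $G_{r-1}$-prefix and, more generally, controlling cancellation in images of $\NEG$ edges --- as this is precisely the subtlety for which ordinary relative train track maps do not suffice; pinning down exactly which $\CT$ axioms supply the ``single edge'' and ``fixed terminal vertex'' facts is a secondary bookkeeping matter.
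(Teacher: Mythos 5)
The paper does not prove this statement at all: it is quoted verbatim from \cite[Lemma 4.21]{FH}, so your proposal has to stand on its own as a derivation from the $\CT$ axioms, and it does not. Your Perron--Frobenius observation is correct and gives $[f(e)]=\alpha_e\cdot \bar e'\cdot\beta_e$ with $\alpha_e,\beta_e$ in $G_{r-1}$ and $\bar e'$ a single $H_r$-edge \emph{or its inverse}, crossed once. But everything after that hinges on the step you yourself flag as ``the main obstacle'': showing that $\alpha_e$ is trivial, that the orientation is preserved, and that no cancellation under iteration eats into the $H_r$-edge. Your rotationless argument for killing a nontrivial permutation cycle only applies once the prefix is gone (otherwise the initial direction of $e$ is sent into $G_{r-1}$ and is not periodic, so rotationlessness says nothing), and the final splitting $f(e_i)=e_i\cdot u_i$ likewise presupposes it. You offer no mechanism for this beyond an appeal to unspecified ``NEG-related axioms''; no $\CT$ axiom states it directly, and extracting it from the actual axioms (rotationlessness, (Periodic Edges), the structure of the filtration, complete splitting of images of edges) is precisely the content of Feighn--Handel's proof. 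As written, the central claim is asserted, not proved, so this is a genuine gap rather than bookkeeping.

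Two secondary points. Your single-edge argument misuses ``reduced'': in \cite{FH} reducedness is a condition on invariant \emph{free factor systems}, not on invariant subgraphs, so an intermediate $f$-invariant subgraph $G_{r-1}\cup e$ is only a contradiction after you argue its core realizes a free factor system strictly between $\mathcal{F}(G_{r-1})$ and $\mathcal{F}(G_r)$; in fact you do not need reducedness at all, since once each edge of $H_r$ maps over itself the transition matrix of a multi-edge $H_r$ would be the identity, contradicting the irreducibility of $M_r$ that is built into the definition of an $\NEG$ stratum. You also never rule out $[f(e)]$ crossing the $H_r$-edge with reversed orientation, which the permutation-matrix count does not see. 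By contrast, the endgame is fine: given $f(e_i)=e_i u_i$, nontriviality of $u_i$, the circuit property via the fixed terminal vertex, and complete splitting of $u_i$ from the (Completely Split) property together with uniqueness of complete splittings all go through as you say.
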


\begin{definition}\label{NEGedges} Let $e\in G$ be an $\NEG$ edge. The edge $e$ is called a \emph{fixed edge} if $f(e)=e$, a \emph{linear edge} if $f(e)=e\eta$ where $\eta$ is a non-trivial Nielsen path, and a \emph{superlinear edge} otherwise. 
\end{definition}

\begin{lemma}[Properties of $\CT$'s]\label{CTmaps}\label{INPFacts}\cite[Definition 4.7, Lemma 4.13, Lemma 4.15, Corollary 4.19, Lemma 4.25]{FH}.
\
\begin{enumerate} 
\item \label{splits1} For each edge $e$ in an irreducible stratum, $f(e)$ is completely split. For each taken connecting path $\gamma$ in a zero stratum, $[f(\gamma)]$ is completely split. 
\item For each filtration element $G_r$, $H_r$ is a zero stratum if and only if $H_r$ is a contractible component of $G_r$. In particular, there are only finitely many reduced connecting paths that are contained in some zero stratum. 
\item Every periodic indivisible Nielsen path $(\INP)$ has period one. 
\item The endpoints of all $\INP$'s are vertices. The terminal endpoint of each $\NEG$ edge is fixed. 
\item \label{splits2} If $\gamma$ is a circuit or an edge-path, then $[f^{k}(\gamma)]$ is completely split for all sufficiently large $k$.
\item \label{zerolink} Each zero stratum $H_i$ is enveloped by an $\EG$ stratum $H_r$, each edge in $H_i$ is $r$-taken, and each vertex in $H_i$ is contained in $H_r$ and has link contained in $H_i\cup H_r$. 
\item If $H_r$ is an $\EG$ stratum, then 
there is at most one indivisible Nielsen path $\rho_r$ of height $r$ that intersects $H_r$ nontrivially. The initial edges of $\rho_r$ and $\rho_r^{-1}$ are distinct edges in $H_r$.  
\item If $H_r$ is a zero stratum or an $\NEG$ superlinear stratum, then no Nielsen path crosses an edge of $H_r$.

\end{enumerate}

\end{lemma}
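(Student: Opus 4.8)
The statement is a digest of the structure theory of $\CT$'s due to Feighn and Handel, so my plan is not to reprove anything but to exhibit, item by item, where each assertion sits in \cite{FH} and to reconcile it with the terminology fixed above. Concretely: item (1) is part of the defining package of a $\CT$ — the "completely split" clause of \cite[Definition 4.7]{FH} — so I would simply quote that axiom, noting that ``irreducible stratum'' and ``taken connecting path in a zero stratum'' mean here exactly what they mean in \cite{FH}. Item (3) is the rotationless condition built into $\CT$'s together with \cite[Corollary 4.19]{FH}, and item (4) (endpoints of $\INP$'s are vertices; terminal endpoints of $\NEG$ edges are fixed) is \cite[Lemma 4.13, Lemma 4.15]{FH}; item (5) — every circuit or edge path eventually iterates to a completely split path — is verbatim \cite[Lemma 4.25]{FH}.

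For the remaining items I would point to the same cluster of results and add a one-line observation where the stated form is a trivial consequence. Item (2): that a stratum $H_r$ is a zero stratum precisely when it is a contractible component of $G_r$ is the ``zero strata'' filtration clause of \cite[Definition 4.7]{FH}; the finiteness of reduced connecting paths in a zero stratum then follows because a contractible component of a finite graph is a finite tree, which contains only finitely many reduced paths. Item (6) is the zero-stratum analysis of \cite[Section 4, Definition 4.7]{FH}: each zero stratum is enveloped by an $\EG$ stratum, its edges are $r$-taken, and the link of each of its vertices is confined to $H_i\cup H_r$. Items (7) and (8) — at most one height-$r$ $\INP$ in an $\EG$ stratum with distinct initial edges of $\rho_r,\rho_r^{-1}$, and no Nielsen path crossing an edge of a zero or $\NEG$-superlinear stratum — are likewise recorded in \cite[Lemma 4.21, Corollary 4.19, Lemma 4.25]{FH}, and I would just transcribe them into the present notation.

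The only genuine care-point — what I would flag as the ``obstacle'' — is hypothesis matching rather than mathematics. Feighn--Handel's statements are phrased for $\CT$'s representing \emph{rotationless} outer automorphisms, and our standing setup (via Theorem \ref{CTsExist}) is that we have replaced $\varphi$ by a suitable power $\varphi^{M}$ for which such a representative exists; so before invoking each cited lemma I would note that all eight properties concern the fixed representative $f\colon G\to G$ of that power and require no further reduction, and that the vocabulary established in the preceding subsections (``$\NEG$'', ``$\EG$'', ``taken'', ``splitting unit'', ``complete splitting'') has been chosen to coincide with \cite{FH}. With that bookkeeping in place the lemma follows immediately from the cited results, and no new argument is introduced.
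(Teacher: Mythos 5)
Your proposal matches the paper exactly in spirit: the paper offers no argument for this lemma beyond the citation to Feighn--Handel, and your item-by-item attribution to \cite[Definition 4.7, Lemma 4.13, Lemma 4.15, Corollary 4.19, Lemma 4.25]{FH}, together with the remark that the standing hypotheses (a $\CT$ representing the rotationless power furnished by Theorem \ref{CTsExist}, with terminology matching \cite{FH}) make the cited results directly applicable, is precisely the intended justification. No gap; nothing further is needed.
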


\subsection{CT's representing atoroidal automorphisms}

Given an atoroidal outer automorphism $\varphi\in \Out(F_N)$, let $f:G\to G$ be a $\CT$ with filtration $\emptyset=G_0\subset G_1\subset \ldots\subset G_k=G$ that represents a suitable power of $\varphi$ as given by Theorem \ref{CTsExist}.
Observe that for such a $\CT$, there are no exceptional paths in the complete splitting of $[f^{n}(e)]$ for any $e\in\Gamma$ as there are no linear edges in $\Gamma$ (since it requires a closed Nielsen path). The following is an easy consequence of definitions: 

\begin{fact} Let $f:G\to G$ be a $\CT$ that represents an atoroidal outer automorphism. Then, every Nielsen path is a legal concatenation of $\INP$'s and fixed edges. 
\end{fact}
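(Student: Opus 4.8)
The plan is to prove this \textbf{Fact} by unwinding the definitions of Nielsen paths and complete splittings, using the structural properties of $\CT$'s from Lemma \ref{CTmaps} together with the atoroidal hypothesis. First I would recall that, by Lemma \ref{CTmaps}\eqref{splits2}, a high iterate of any path or circuit is completely split; combined with the fact that a Nielsen path $\rho$ satisfies $[f(\rho)]=\rho$ (using Lemma \ref{CTmaps}(3), every periodic $\INP$ has period one, and more generally every Nielsen path is fixed after replacing nothing since the period is one), we see that $\rho=[f^k(\rho)]$ for all $k$, hence $\rho$ is itself completely split. So $\rho$ admits a complete splitting into splitting units: edges in irreducible strata, $\INP$'s, exceptional paths, and maximal taken connecting paths in zero strata.

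Next I would rule out all splitting-unit types except fixed edges and $\INP$'s. Exceptional paths are already excluded by the discussion immediately preceding the Fact: a $\CT$ representing an atoroidal automorphism has no linear edges, because a linear edge requires a closed (root-free) Nielsen path, and a closed Nielsen path is a fixed conjugacy class, contradicting atoroidality. For a maximal taken connecting path $\sigma$ in a zero stratum $H_i$: by Lemma \ref{CTmaps}\eqref{zerolink}, $H_i$ is enveloped by an $\EG$ stratum $H_r$, and $\sigma$ is $r$-taken; but then $[f(\sigma)]$ strictly grows in length under iteration (the $\EG$ stratum it is taken in is exponentially growing and $\sigma$ crosses edges of $H_i$, which are $r$-taken), so $\sigma$ cannot be a subpath of a fixed path $\rho$ — more carefully, if $\sigma$ were a splitting unit of the fixed path $\rho$, then $[f^k(\rho)]$ would contain $[f^k(\sigma)]$ as a subpath for all $k$, forcing $|[f^k(\sigma)]|$ to be bounded, contradicting exponential growth. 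For an edge $e$ in an $\EG$ stratum $H_r$ appearing as a splitting unit of $\rho$: again $|[f^k(e)]|\to\infty$, contradicting that $\rho$ is fixed. This leaves only edges in $\NEG$ strata and $\INP$'s as possible splitting units. An $\NEG$ edge $e$ that is a splitting unit of $\rho$ must itself be fixed: if it were superlinear then $|[f^k(e)]|\to\infty$ (again contradicting fixedness of $\rho$), and it cannot be linear since there are no linear edges. Hence every splitting unit of $\rho$ is either a fixed edge or an $\INP$, which is exactly the claim.

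The one point requiring a little care — and the main (minor) obstacle — is the claim that a splitting unit of a fixed path is itself ``essentially fixed,'' i.e. that the complete splitting of $\rho=[f^k(\rho)]$ respects the splitting units so that each unit $\gamma_i$ has $|[f^k(\gamma_i)]|$ bounded. This follows from uniqueness of complete splittings (the Lemma stated after the definition of complete splittings): since $\rho=\gamma_1\cdot\dots\cdot\gamma_m$ is the complete splitting and the splitting property gives $[f^k(\rho)]=[f^k(\gamma_1)]\dots[f^k(\gamma_m)]$, and since $[f^k(\rho)]=\rho$ also has complete splitting $\gamma_1\cdot\dots\cdot\gamma_m$, the complete splitting of each $[f^k(\gamma_i)]$ fits together by uniqueness into the complete splitting of $\rho$; in particular the total number of splitting units of $\rho$ is unchanged, which forces no $\gamma_i$ to grow (a growing $\EG$ edge or superlinear edge would increase the unit count). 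I would also note at the outset that it suffices to prove this for a $\CT$ representing a power $\varphi^M$, since the statement is about the $\CT$ map $f$ itself, not about $\varphi$ directly, so no loss arises from Theorem \ref{CTsExist}. Finally I would remark that the converse direction — that any legal concatenation of $\INP$'s and fixed edges is a Nielsen path — is immediate, since $f$ fixes each $\INP$ and each fixed edge and the concatenation is already tight (legal), so the displayed statement is really a characterization, though only the stated direction is needed.
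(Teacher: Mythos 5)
Your overall strategy (show the Nielsen path is completely split and then discard splitting-unit types) can be made to work, but as written it has two genuine gaps. First, your exclusion of zero-stratum units is based on a false growth claim: being $r$-taken says that $\sigma$ occurs in the \emph{past} of an $\EG$ edge, not that its forward images grow, and indeed the paper's own definition of expanding splitting units makes clear that a maximal taken connecting path in a zero stratum need not satisfy $|[f^k(\sigma)]|\to\infty$ (that is exactly why type (3) there carries an extra hypothesis). So a bounded zero-stratum unit is not ruled out by your argument. The correct tool is item (8) of Lemma \ref{CTmaps}: no Nielsen path crosses an edge of a zero stratum or of a superlinear $\NEG$ stratum, which disposes of both the zero-stratum case and the superlinear case at once, with no growth estimates needed. (Your ``care point'' about units of a fixed path not growing is correct but can be done more simply: in a splitting lengths add with no cancellation, so $\sum_i|[f^k(\gamma_i)]|=|\rho|$ for all $k$.)

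Second, you never prove the word ``legal'' in the statement. A complete splitting, as defined in the paper, only says that tightening happens inside the terms; it does not say the turns at the junctures are legal, and your closing parenthetical ``tight (legal)'' conflates tightness (nondegeneracy of the turns) with legality (no iterate of $Df$ makes them degenerate), which is a strictly stronger condition. The missing step is short: the initial and terminal directions of a fixed edge are fixed by $Df$, and the same holds for the ends of each $\INP$ (since $[f(\rho_r)]=\rho_r$ with cancellation only at its interior illegal turn, the initial direction of $\rho_r$ satisfies $Df(d)=d$); hence each juncture turn is a turn between two distinct fixed directions, which can never become degenerate and is therefore legal. The paper offers no written proof (it calls the Fact an easy consequence of the definitions, the intended route being the decomposition of a Nielsen path at fixed vertices into indivisible pieces via Lemma \ref{CTmaps}(3),(4) together with the observation about fixed directions), so with the two repairs above your complete-splitting route is an acceptable, if somewhat heavier, alternative.
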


\begin{definition} We call a splitting unit $\sigma$ \emph{expanding} if $|[f^{n}(\sigma)]|\to\infty$ as $n\to\infty$. If $f:G\to G$ is a $\CT$ that represents an atoroidal outer automorphism, then an expanding splitting unit is one of the following three types:
\begin{enumerate}
\item an edge in an $\EG$ stratum
\item a superlinear edge in an $\NEG$ stratum 
\item a maximal connecting path $\gamma$ in a zero stratum such that the complete splitting of $[f^{k}(\gamma)]$ contains at least one of the above two types for some $k\ge1$. 
\end{enumerate}
\end{definition}

\subsection{Geodesic currents}\label{currents}

Let $\partial\FN$ denote the Gromov boundary of $\FN$. Let $\partial^{2}\FN$ be the double boundary, i.e.  $\partial^{2}\FN=\partial\FN\times\partial\FN\setminus\Delta$ where $\Delta$ denotes the diagonal. Let $\iota:\partial^{2}\FN\to\partial^{2}\FN$ be the \emph{flip} map given by $\iota(\zeta_1,\zeta_2)=(\zeta_2,\zeta_1)$. 

The group $\FN$ acts on itself by left multiplication which induces an action of $\FN$ on $\partial\FN$ and hence on $\partial^{2}\FN$. A \emph{geodesic current} on $\FN$ is a locally finite (positive) Borel measure on  $\partial^{2}\FN$ which is both $\FN$-invariant and flip-invariant. 

The space of geodesic currents on $\FN$ is denoted by $\Curr(\FN)$, and endowed with the weak-* topology it is a metrizable topological space \cite{Bosurvey}. The space of \emph{projectivized geodesic currents} $\PCurr(\FN)$ is the quotient of $\Curr(\FN)$ where two currents are equivalent if they are positive scalar multiples of each other. The space $\PCurr(\FN)$ is compact, see \cite{Ka2}.

Both $\Aut(\FN)$ and $\Out(\FN)$ acts on the space of currents by homeomorphisms, and these actions descend to well defined actions on $\PCurr(\FN)$. 

Let $g\in\FN$ be an element which is not a proper power.  We define the \emph{counting current} $\eta_g$ 
corresponding to $g$ as follows: for any Borel set $S\subset\partial^{2}\FN$
the value 
$\eta_g(S)$ is the number of $\FN$-translates of $(g^{-\infty},g^{\infty})$ or of  $(g^{\infty},g^{-\infty})$ 
that 
are contained in 
$S$. For any nontrivial element
$h\in\FN$ 
we write $h=g^{k}$, where $g$ is not a proper power, and set 
$\eta_{h}:=k\eta_{g}$.
A \emph{rational current} is a non-negative real multiple of a counting current. The set of rational currents forms a dense subset of $\Curr(\FN)$, see \cite{Ka1,Ka2, Martin}.

\section{Dynamics of atoroidal automorphisms}\label{dynamicsofatoroidal}

\subsection{North-south dynamics}\label{n-sdynamics}
Let $X$ be a compact metric space, and $G$ be a group acting on $X$ by homeomorphisms. We say that $g\in G$ acts on $X$ with \emph{(uniform) north-south dynamics} if the action of $g$ on $X$ has two distinct fixed points $x_-$ and $x_+$ and for any open neighborhood $U_{\pm}$ of $x_{\pm}$ and a compact set $K_{\pm}\subset X\setminus x_{\mp}$ there exists $M>0$ such that 
\[
g^{\pm n}K\subset U_{\pm}
\] 
for all $n\ge M$. 

North-south dynamics is a strong form of stability property for the action of a group on a compact metric space, and allows one to deduce various structural results about the group itself. For example, a fully irreducible outer automorphism $\varphi\in \Out(\FN)$ acts on the the closure $\overline{CV}$ of the projectivized outer space with north-south dynamics \cite{LL}. Similarly, if $\varphi$ is both fully irreducible and atoroidal, then $\varphi$ acts on $\PCurr(\FN)$ with north-south dynamics, \cite{Martin}, see also \cite{Uyaiwip}. On the other hand, an atoroidal outer automorphism does not act on $\PCurr(\FN)$ with classical north-south dynamics. Existence of invariant free factors makes them dynamically more complicated but, as we show below, they still exhibit a strong form of stability.  

\begin{definition}[Generalized north-south dynamics] Let $X$ be a compact metric space, and $G$ be a group acting on $X$ by homeomorphisms. We say that an element $g\in G$ acts on $X$ with generalized north-south dynamics if the action of $g$ on $X$ has two invariant disjoint sets $\Delta_{-}$, and $\Delta_{+}$, (i.e. $g\Delta_{-}=\Delta_-$ and $g\Delta_+=\Delta_+$) and for any open neighborhood $U_{\pm}$ of $\Delta_{\pm}$ and a compact set $K_{\pm}\subset \PCurr(\FN)\setminus\Delta_{\mp}$, there exists $M>0$ such that 
\[
g^{\pm n}K_{\pm}\subset U_{\pm}
\]
for all $n\ge M$. 
\end{definition}

\noindent We restate Theorem \ref{dynamicsofhyp} from the introduction for the benefit of the reader, the proof of which is given at the end of this section. 

\begin{restate}{Theorem}{dynamicsofhyp}
 Let $\varphi\in \Out(\FN)$ be an atoroidal outer automorphism of a free group of rank $N\ge3$. Then, there exist a simplex of attraction $\Delta_{+}$ and a simplex of repulsion $\Delta_{-}$ in $\PCurr(\FN)$ such that $\varphi$ acts on $\PCurr(\FN)$ with generalized north-south dynamics from $\Delta_{-}$ to $\Delta{+}$. 
\end{restate}

The rest of this section is modeled on our earlier paper \cite{LU2} with M. Lustig, and utilizes the dynamics of reducible substitutions as treated in \cite{LU1}. In what follows we explain the subtleties that arise in this new setting carefully, while referring to \cite{LU2} for arguments that follow by straightforward modifications from the old setting.

\subsection{Symbolic dynamics and $\CT$'s} 

In this section we recall the relevant definitions in symbolic dynamics and results from our earlier paper \cite{LU1}, that allows us to describe the simplex of attraction and simplex of repulsion in Theorem \ref{dynamicsofhyp} explicitly. 

Let $A=\{a_1,\ldots, a_n\}$ be a finite alphabet, and $A^{*}$ denote the set of all finite words in $A$. A \emph{substitution} $\xi: A\to A^*$ is a rule that assigns each letter $a\in A$ a non-empty word $w$ in $A^{*}$. A substitution induces a map, which we also denote with $\xi$, on the set of infinite words $A^{\NN}$ by concatenation: 
\[
\xi:A^{\NN}\to A^{\NN}
\]
\[
x_1x_2\ldots \mapsto \xi(x_1)\xi(x_2)\ldots\]

Given a substitution $\xi:A\to A^{*}$ there is an associated transition matrix $M_{\xi}$, where $\{M_{\xi}\}_{ij}$ is the number of occurrences of $a_j$ in $\xi(a_i)$. A substitution $\xi$ is called \emph{irreducible} if for all $1\le i,j \le n$, there exists an exponent $k=k(i,j) \ge1$ such that the letter $a_i$ appears in the word $\xi^{k}(a_j)$. The substitution $\xi$ is called \emph{primitive} if $k$ can be chosen independently. in what follows, up to passing to powers and rearranging the letters, we will assume that each transition matrix is a lower diagonal block matrix where each diagonal block is either primitive, or has bounded entries for all $M^{t}$ for all $t\ge1$, \cite[Lemma 3.1]{LU1}. We refer reader to \cite{Q} and \cite{LU1} for a detailed account of substitutions. 

Given a non-primitive substitution we consider maximal invariant \emph{sub-alphabets} \[
0=A_0\sqsubset  A_1\sqsubset A_2\sqsubset\ldots\sqsubset A_n=A\]
and call $A_{i+1}\setminus A_{i}$ the $i^{th}$ stratum in analogy with train tracks terminology, \cite[Proof of Proposition 3.5]{LU1}.   

Given two words $w_1$ and $w_2$ in $A^{*}$, let $|w_1|_{w_2}$ denote the number of occurrences of the word $w_2$ in $w_1$.  The following is a slight variations of Theorem 1.2 and Corollary 1.3 of \cite{LU1}, a detailed proof of which is given in \cite[Proposition 5.4, Case 1]{LU1}. 

\begin{proposition}\cite{LU1}
\label{substitutionresult}
Let $\xi$ be a substitution 
on 
a finite alphabet $A$. 
Then there exists a positive power 
$\zeta=\xi^{s}$ 
such that
for 
any 
non-empty 
word 
$w \in A^*$ 
and any letter $a_i \in A$ 
the 
limit 
frequency
\[
\lim_{t\to\infty}\frac{|\zeta^{t}(a_i)|_w}{|\zeta^{t}(a_i)|}
\]
exists. Furthermore, if $a_i$ is in a primitive stratum $H_i$, where the Perron-Frobenius eigenvalue of $H_i$ is strictly bigger than those of the dependent strata, then the limit frequencies are independent of the chosen letter. \end{proposition}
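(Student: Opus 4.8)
The plan is to reduce the statement to the two structurally opposite cases of the block decomposition and handle each with Perron--Frobenius theory. First I would fix, using \cite[Lemma 3.1]{LU1}, a power $\xi^{s_0}$ whose transition matrix is lower block triangular with each diagonal block either primitive or uniformly bounded in all powers; passing to a further power only refines this, so it costs nothing. For a fixed letter $a_i$ lying in stratum $H_i$, the orbit $\zeta^t(a_i)$ is built recursively: letters of $H_i$ reproduce among themselves according to the primitive (or bounded) block $M_i$, and each such letter drags along an ever-growing tail of letters from the lower strata $A_{i-1}$. The key quantitative device is the renewal-type identity expressing $|\zeta^t(a_i)|_w$ as a sum over $0 \le r \le t$ of contributions from occurrences of $w$ created ``at generation $r$'' inside images of single letters, weighted by how many such letters appear in $\zeta^{t-r}(a_i)$; occurrences of $w$ straddling the boundary between two consecutive letter-images are controlled because $w$ has bounded length and each image has length tending to infinity, so their relative contribution is $O(1/|\zeta^t(a_i)|)$ and vanishes in the limit.

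Next I would establish the existence of the limit. Normalize the letter-count vector $v_t$ recording, for each letter $b$, the number of occurrences of $b$ in $\zeta^t(a_i)$. If $H_i$ is primitive with PF eigenvalue $\lambda_i$, then the component of $v_t$ in the $H_i$-directions grows like $\lambda_i^t$ times the PF eigenvector, by the Perron--Frobenius theorem applied to $M_i$ (after the power is chosen so the block is genuinely primitive). The lower-stratum components of $v_t$ satisfy an inhomogeneous linear recursion driven by the $H_i$-part; one solves it and sees that $v_t$, after dividing by the dominant growth rate $\Lambda := \max(\lambda_i, \text{growth rates of dependent strata})$, converges to an explicit limit vector $v_\infty$. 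The numerator $|\zeta^t(a_i)|_w$ is, up to the negligible straddling error, a fixed linear functional $c_w(\cdot)$ applied to $v_{t-r}$ summed against the generating weights, and the denominator $|\zeta^t(a_i)| = \mathbf{1}^{\mathsf T} v_t$ behaves the same way; taking the ratio and letting $t \to \infty$, the dominant-order terms survive and the limit frequency equals $\langle c_w, v_\infty\rangle / \langle \mathbf 1, v_\infty\rangle$, which exists. This is precisely the content extracted from \cite[Proposition 5.4, Case 1]{LU1}, and I would cite that for the detailed bookkeeping rather than redo it.

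For the ``furthermore'' clause, suppose $a_i$ lies in a primitive stratum $H_i$ whose PF eigenvalue strictly dominates those of all dependent (lower) strata, so $\Lambda = \lambda_i$. Then in the computation above the lower-stratum contributions to $v_t/\lambda_i^t$ decay to zero: the inhomogeneous recursion for the dependent components is driven at rate $\lambda_i$ but the homogeneous part grows strictly slower, so the solution is $o(\lambda_i^t)$ divided back gives $0$. Hence $v_\infty$ is supported on $H_i$ and is a scalar multiple of the PF eigenvector of $M_i$ — a vector that does \emph{not} depend on which letter $a_i \in H_i$ we started from. Consequently both $\langle c_w, v_\infty\rangle$ and $\langle \mathbf 1, v_\infty\rangle$ are independent of $a_i$ (up to a common scalar that cancels in the ratio), and the limit frequency is the same for every letter in $H_i$. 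The main obstacle I anticipate is the careful treatment of the straddling/boundary occurrences of $w$ and of the interaction between strata in the inhomogeneous recursion — making precise that the error terms are genuinely lower order uniformly in $t$ — but since this is exactly the estimate carried out in \cite{LU1}, the proof here is a matter of invoking that proposition with the observation that the block-triangular normal form and the strict-domination hypothesis are preserved under the passage to powers.
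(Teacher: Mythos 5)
You are right that the paper itself offers no proof here: Proposition \ref{substitutionresult} is imported from \cite{LU1} (stated as a slight variation of Theorem 1.2 and Corollary 1.3 there, with the detailed argument in Proposition 5.4, Case 1), so deferring the bookkeeping to \cite{LU1} is consistent with what the paper does. But your sketch of how that bookkeeping goes contains a step that genuinely fails: the dismissal of straddling occurrences as a relative contribution of order $O(1/|\zeta^t(a_i)|)$. In the renewal recursion $N_t(a)=\sum_b |\zeta(a)|_b\,N_{t-1}(b)+E_t(a)$, where $N_t(a)=|\zeta^t(a)|_w$ and $E_t(a)$ counts the occurrences of $w$ crossing a boundary between consecutive letter-images, the boundary term is bounded at each single level, but the recursion is iterated $t$ times, so the accumulated contribution $\sum_{r\le t} M^{t-r}E_r$ grows at the \emph{same} exponential order as the main term; it is not lower order and it changes the value of the limit. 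Worse, for many words there are no interior occurrences at all: for the Fibonacci substitution $a\mapsto ab$, $b\mapsto a$ and $w=aa$, every occurrence of $aa$ straddles a boundary at every level of the decomposition, yet the limiting frequency of $aa$ is strictly positive. So your claimed limit $\langle c_w,v_\infty\rangle/\langle \mathbf{1},v_\infty\rangle$ omits a leading-order term, and the existence argument as written does not control the full numerator. The correct treatments are either to pass to the induced substitution on $2$-blocks (or $|w|$-blocks), reducing word frequencies to letter frequencies with no error term, or to keep $E_t$ and show it eventually stabilizes because, after passing to a power, the length-$|w|$ prefixes and suffixes of $\zeta^t(b)$ become eventually constant; this stabilization is exactly the kind of care taken in \cite{LU1}.

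A second, smaller gap concerns the first clause, which carries no domination hypothesis: your normalization $v_t/\Lambda^t$ need not converge when the PF eigenvalue of the stratum of $a_i$ ties with that of a dependent stratum it maps onto, since then the letter-count vector grows like $t^k\Lambda^t$; one must normalize by $|\zeta^t(a_i)|$ itself and handle the polynomial factors, which is part of the case analysis in \cite{LU1}. Your argument for the ``furthermore'' clause is sound in spirit under strict domination (the normalized letter-count vector converges to a multiple of the PF eigenvector of $M_i$, independent of the starting letter), but even there the boundary contributions must be shown to be asymptotically letter-independent as well, which again is cleanest via the block-substitution coding.
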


The next Proposition shows how one can extract dynamical information from $\CT$'s by interpreting them as substitutions and invoking Proposition \ref{substitutionresult}.  Similar ideas were also used in our earlier work \cite{Uyaiwip, LU2} in the setting of train tracks and \cite{Gupta} in the $\CT$ setting for studying dynamics of \emph{relative} outer automorphisms.

For any two reduced edge paths $\gamma$ and $\gamma'$ in a graph $G$ define 
\[
\langle\gamma,\gamma'\rangle:=|\gamma'|_{\gamma}+|\gamma'|_{\gamma^{-1}}
\]

\begin{proposition}\label{freqconverge} Let $f:G\to G$ be a $\CT$ that represents an atoroidal outer automorphism $\varphi\in\Out(\FN)$. For any splitting unit $\sigma$, and any reduced edge path $\gamma$ in $G$ the limit 
\[
\sigma_{\gamma}:=\lim_{n\to\infty}\frac{\langle\gamma, f^t(\sigma)\rangle}{|f^t(\sigma)|}
\]
exists. Moreover, for any expanding splitting unit $\sigma$, the set of values $\{\sigma_{\gamma}\mid \textit{$\gamma$ is a reduced edge path in $G$}\}$ defines a geodesic current $\mu_\sigma$ on $\FN$.
\end{proposition}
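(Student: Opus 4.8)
The plan is to reduce Proposition~\ref{freqconverge} to the substitution result (Proposition~\ref{substitutionresult}) by encoding the action of $f$ on splitting units as a substitution on a suitable finite alphabet, and then to package the resulting limit frequencies into a geodesic current using the standard correspondence between $\FN$-invariant, flip-invariant finitely-additive ``frequency'' functionals on reduced edge paths and geodesic currents.

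First I would set up the alphabet. By Lemma~\ref{INPFacts}\eqref{splits1} and \eqref{splits2}, if $\sigma$ is a splitting unit then $[f^t(\sigma)]$ is completely split for all $t$ (immediately, when $\sigma$ is an edge of an irreducible stratum or a taken connecting path, and after finitely many iterates in general), so it has a well-defined decomposition into splitting units. Since there are only finitely many edges, finitely many $\INP$'s (Lemma~\ref{INPFacts}(3),(7)), and finitely many reduced connecting paths in zero strata (Lemma~\ref{INPFacts}(2)), the set $\calS$ of splitting units in $G$ is finite. Thus $f$ induces a substitution $\xi$ on the alphabet $\calS$ (possibly after discarding the non-expanding, i.e. $\INP$ and fixed-edge, units, or rather keeping them: they form strata with bounded transition-matrix powers, which is exactly the situation Proposition~\ref{substitutionresult} allows). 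Passing to the power $\zeta = \xi^s$ supplied by Proposition~\ref{substitutionresult} — note this is harmless since $f$ itself already represents a fixed power of $\varphi$ fixed earlier, and replacing $f$ by $f^s$ only changes $f^t(\sigma)$ along a subsequence, along which the limit defining $\sigma_\gamma$ exists, and a routine interpolation argument over the intermediate iterates $f^{ms+j}$ recovers the full limit — I would apply Proposition~\ref{substitutionresult} to conclude that for every splitting unit $\sigma$ and every word $u$ in the alphabet $\calS$, the frequency $\lim_t |\zeta^t(\sigma)|_u / |\zeta^t(\sigma)|$ exists.

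Next, I would pass from counting splitting units to counting edge paths. Given a reduced edge path $\gamma$ in $G$, the count $\langle \gamma, [f^t(\sigma)]\rangle$ of occurrences of $\gamma$ or $\gamma^{-1}$ can be computed from the complete splitting of $[f^t(\sigma)]$: since a completely split path is an honest concatenation (with tightening) of splitting units whose lengths are uniformly bounded above (each splitting unit is a fixed path in the finite graph $G$), any occurrence of $\gamma$ spans a bounded number of consecutive splitting units, so $\langle\gamma, [f^t(\sigma)]\rangle$ equals $\sum_{u} c(\gamma, u)\,|[f^t(\sigma)]|_u$ where the sum is over the finitely many splitting-unit-words $u$ of bounded length and $c(\gamma,u)$ is a fixed nonnegative integer counting occurrences of $\gamma^{\pm 1}$ in the path spelled by $u$ minus a correction for overcounting at the boundaries between $u$'s — the usual inclusion–exclusion over overlaps. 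Dividing by $|[f^t(\sigma)]|$ and comparing with the denominator $|f^t(\sigma)| = |[f^t(\sigma)]|$ (these agree since $f$ is a train track map on the relevant legal structure — more precisely, for completely split paths the length is additive over the splitting), the existence of $\sigma_\gamma$ follows from the existence of the frequencies $\lim_t |[f^t(\sigma)]|_u / |[f^t(\sigma)]|$, which in turn reduces to the word-frequencies $\lim_t |\zeta^t(\sigma)|_u/|\zeta^t(\sigma)|$ after accounting for the (bounded) variation in the lengths $|\zeta^t(\sigma)|$ versus the number of splitting units in $\zeta^t(\sigma)$.

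Finally, for the current: when $\sigma$ is expanding, $|f^t(\sigma)| \to \infty$, so the normalized counting measures $\eta_{[f^t(\sigma)]} / |f^t(\sigma)|$ on $\partial^2\FN$ (in the sense of the counting-current construction in Section~\ref{currents}, applied to the subpath $[f^t(\sigma)]$ lifted to the universal cover $\widetilde G$ and transported to $\FN$ via the marking) are a sequence of currents whose ``$\gamma$-cylinder'' values converge to $\sigma_\gamma$ for every reduced edge path $\gamma$. Since the cylinder sets associated to reduced edge paths in $G$ generate the topology on $\partial^2\FN$ and determine a current, and since $\PCurr(\FN)$ is compact (so the sequence has convergent subsequences, all of which must have the same cylinder values $\sigma_\gamma$ and hence coincide), the limit $\mu_\sigma := \lim_t \eta_{[f^t(\sigma)]}/|f^t(\sigma)|$ exists in $\Curr(\FN)$ and is the desired current; $\FN$-invariance and flip-invariance are inherited from the counting currents, and positivity/local-finiteness from the weak-$*$ limit. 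I expect the main obstacle to be the bookkeeping in the second step — making precise and uniform the claim that occurrences of $\gamma^{\pm 1}$ in a completely split path are controlled by a bounded window of consecutive splitting units, handling the tightening at splitting-unit boundaries, and correctly comparing the ``number of splitting units'' normalization used by Proposition~\ref{substitutionresult} with the ``edge-length'' normalization used in the statement. This is exactly the kind of legal-structure argument that, as the introduction notes, works for absolute train tracks in \cite{LU2} and must here be redone with the splitting units of the $\CT$ playing the role of the legal edges.
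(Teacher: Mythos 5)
Your proposal is correct and follows essentially the same route as the paper: both encode the action of $f$ on the finite alphabet of splitting units (edges of irreducible strata, $\INP$'s, taken connecting paths in zero strata) as a substitution, invoke Proposition~\ref{substitutionresult} for the limiting frequencies, and then upgrade those frequencies to a geodesic current. The paper merely organizes the reduction as an induction on stratum height (treating $\EG$, $\NEG$ and zero strata separately, with non-expanding units dispatched by periodicity of Nielsen paths) and verifies the current via the Kirchhoff conditions and Kolmogorov extension rather than your weak-$*$ limit of normalized counting currents, but these are presentational rather than substantive differences.
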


\begin{proof}
If the splitting unit is not expanding then there is a definite bound on the length $|[f^{n}(\sigma)]|$ for all $n\ge1$. Hence the image $[f^{n}(\sigma)]$ becomes periodic after sufficiently many iterations. Since every periodic Nielsen path has period one, the sequence of paths $[f^{n}(\sigma)]$ becomes eventually fixed, and the claim follows.  For the remaining part of the proof we assume that $\sigma$ is an expanding splitting unit and will prove the claim by induction on the height of the stratum. Let $r=1$. Since $\varphi$ is atoroidal, $H_1$ is necessarily an $\EG$ stratum, and the restriction of $f$ to $G_1=H_1$ is an absolute train track map. Hence, the result follows from \cite[Proposition 2.4 and Lemma 3.7]{Uyaiwip}. Now assume that the claim holds for $r\le k-1$. There are three cases to consider. 

First suppose that $H_k$ is an $\EG$ stratum. A splitting unit of height $k$ is either an edge $e\in H_k$, or an $\INP$ intersecting $H_k$. Since an $\INP$ is not expanding we just need to prove the claim for an edge $e\in H_k$.
Let $A$ be the alphabet whose letters consists of edges in irreducible strata that are in $G_k$, $\INP$'s contained in $G_{k-1}$, and maximal, taken connecting paths in a zero stratum that are in $G_{k-1}$.  The fact that this alphabet is finite follows from the properties of the $\CT$ map that represents an atoroidal outer automorphism. Let  $\zeta:A^{*}\to A^{*}$ be the substitution induced by the $\CT$ $f:G\to G$ on the alphabet $A$ using the following rule: $\zeta(\sigma)=[f(\sigma)]$. For each ``letter'' in the above alphabet, the image is completely split and hence a reduced ``word'' in this alphabet. Hence the above formula is a substitution, and Proposition \ref{substitutionresult} gives the required convergence. 

The latter claim that the set of values $\{\sigma_{\gamma}\}_{\gamma\in\mathcal{P}G}$ defines a unique geodesic current is easy to check. They satisfy Kirchhoff conditions, i.e. 
\begin{enumerate}
\item $0\le \sigma_{\gamma}\le2<\infty$
\item $\sigma_{\gamma}=\sigma_{{\gamma}^{-1}}$
\item $\sigma_{\gamma}=\sum_{a\in A} \sigma_{a\gamma}=\sum_{a\in A} \sigma_{\gamma a}$ 
\end{enumerate}
as in \cite[Proposition 3.13]{LU1} and  \cite[Lemma 3.7]{Uyaiwip}, and by Kolmogorov measure extension theorem the result follows. 

Now assume that $H_k$ is an $\NEG$ stratum. Since $\sigma$ is expanding it is necessarily a superlinear edge $e$. By properties of $\CT$'s, $f(e)=e\cdot u$ where $u$ is a circuit in $G_{k-1}$ such that  $u$ is completely split and the turn $(u, u^{-1})$ is legal. We can similarly define a substitution as in $\EG$ case where the alphabet consists of the edge $e$, and splitting units appearing in $u$, and all of its iterates. The frequency convergence for the corresponding substitution is now given by Theorem \ref{substitutionresult}. 

Finally, if $H_k$ is a zero stratum, then $\sigma$ is a maximal connecting taken path, whose image $[f(\sigma)]$ is completely split, and has height $\le k-1$. Hence the claim follows by induction. 

\end{proof}

\begin{remark} Note that Proposition \ref{substitutionresult} together with the arguments in the proof of Proposition \ref{freqconverge} reveals that, for an $\EG$ stratum $H_r$ where the $\PF$-eigenvalue is strictly greater than those of the dependent strata, the currents $\mu_e$ are independent of the edge $e$ chosen from $H_r$. Furthermore, combined with \cite[Proposition 5.4]{LU1}, we have that for any other expanding splitting unit $\sigma$, the current $\mu_{\sigma}$ is a linear combination of currents coming from edges in $\EG$ strata. 
\end{remark}

\begin{definition} Given a $\CT$ map $f:G\to G$ that represents an atoroidal outer automorphism $\varphi\in\Out(\FN)$, we define the \emph{simplex of attraction} as the projective class of non-negative linear combinations of currents obtained from Proposition \ref{freqconverge}. We define the \emph{simplex of repulsion} similarly, using a $\CT$ map that represents $\varphi^{-1}$.  
\end{definition}

\subsection{Goodness and legal structure}

\begin{lemma}[Bounded Cancellation Lemma]\cite{Coo}\label{BCL} Let $f:G\to G$ be a topological graph map. There exists a constant $C_{f}$ such that for any reduced path $\rho=\rho_{1}\rho_{2}$ in $G$ one has
\[
|[f(\rho)]|\ge|[f(\rho_1)]|+|[f(\rho_2)]|-2C_{f}.
\]
That is, at most $C_f$ terminal edges of $[f(\rho_1)]$ are cancelled with $C_f$ initial edges of $[f(\rho_2)]$ when we concatenate them to obtain $[f(\rho)]$. 
\end{lemma}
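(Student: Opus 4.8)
The plan is to reduce the statement to a bound on the number of edges a lift of $f$ can map "backward" near a vertex, and to obtain that bound from the fact that $f$ is a homotopy equivalence, hence its lift $\widetilde{f}\colon \widetilde{G}\to\widetilde{G}$ to universal covers is a quasi-isometry of the tree $\widetilde{G}$. Concretely, fix a lift $\widetilde f$ and endow $\widetilde G$ with the path metric in which every edge has length $1$. Because $f$ sends edges to nondegenerate edge paths and is $\pi_1$-surjective, $\widetilde f$ is a quasi-isometry: there are constants $\lambda\ge 1$ and $\varepsilon\ge 0$ with
\[
\tfrac{1}{\lambda}\, d(x,y)-\varepsilon \;\le\; d\bigl(\widetilde f(x),\widetilde f(y)\bigr)\;\le\;\lambda\, d(x,y)+\varepsilon
\]
for all vertices $x,y$ of $\widetilde G$; one takes $\lambda$ to be the maximal edge-image length and uses a quasi-inverse coming from a homotopy inverse of $f$ to get the lower bound. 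Here $[f(\cdot)]$ at the level of $\widetilde G$ is just the geodesic (there is a unique reduced path in a tree), so $|[f(\rho)]|=d(\widetilde f(\widetilde o(\rho)),\widetilde f(\widetilde t(\rho)))$ after lifting $\rho$.

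The key geometric step is the following: if $\widetilde\rho=\widetilde\rho_1\widetilde\rho_2$ is a reduced (hence geodesic) path in $\widetilde G$ with common endpoint $v$, let $p=\widetilde f(\widetilde o(\rho))$, $q=\widetilde f(v)$, $r=\widetilde f(\widetilde t(\rho))$. The amount of cancellation when concatenating $[f(\rho_1)]$ and $[f(\rho_2)]$ is exactly the length of the overlap of the geodesics $[p,q]$ and $[q,r]$, i.e.\ $\tfrac12\bigl(d(p,q)+d(q,r)-d(p,r)\bigr)$, the Gromov product $(p\cdot r)_q$. So I must bound $(p\cdot r)_q$ independently of $\rho$. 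Let $z$ be the point on $[p,q]\cap[q,r]$ at distance $(p\cdot r)_q$ from $q$; choose a vertex $m$ on $\widetilde\rho$ whose $\widetilde f$-image is within $\lambda$ of $z$ along $[p,q]$ (possible since $\widetilde f$ of the vertices of $\widetilde\rho_1$ $\varepsilon$-coarsely fill $[p,q]$ up to gaps of size $\le\lambda$). Since $m$ lies on the geodesic $\widetilde\rho$ through $v$, it lies on one of the two subsegments; say $m\in\widetilde\rho_1$, so $d(m,\widetilde t(\rho))=d(m,v)+d(v,\widetilde t(\rho))$. Applying the quasi-isometry inequalities to the pairs $(m,v)$, $(v,\widetilde t(\rho))$, $(m,\widetilde t(\rho))$ and using that $\widetilde f(m)$ is $(\lambda+$small$)$-close to a point $z$ with $d(z,r)=d(z,q)+d(q,r)$ up to an additive error, one gets $d(m,v)\le \lambda\bigl(\varepsilon+ \text{const}\cdot\lambda\bigr)$ — that is, $m$ is within a bounded distance of $v$, which forces $(p\cdot r)_q\le \lambda\, d(\widetilde f(m),q)+\varepsilon$ to be bounded. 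Setting $C_f$ to be this bound (which depends only on $\lambda,\varepsilon$, hence only on $f$) and pushing the inequality back down to $G$ gives the stated
\[
|[f(\rho)]|\;\ge\;|[f(\rho_1)]|+|[f(\rho_2)]|-2C_f .
\]

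The main obstacle is making the "coarse filling of the geodesic $[p,q]$ by images of vertices" argument precise: a priori $\widetilde f$ is only defined on all of $\widetilde G$, but the geodesic $[p,q]$ between $\widetilde f(\widetilde o(\rho))$ and $\widetilde f(v)$ need not be the $\widetilde f$-image of $\widetilde\rho_1$, so one cannot directly say a vertex of $\widetilde\rho_1$ maps near $z$. The clean way around this is to note that the concatenated path $\widetilde f(\widetilde\rho_1)\cdot\widetilde f(\widetilde\rho_2)$ (as an unreduced edge path) does pass through $q=\widetilde f(v)$, and its reduction to $[p,r]$ removes a backtracking segment of length exactly the overlap; then one observes that the "spur" hanging off $[p,r]$ at the reduction point has length $(p\cdot r)_q$ and its far end is the image of an interior point of $\widetilde\rho$, to which the lower quasi-isometry bound applies. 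Alternatively — and this is probably the cleanest write-up — one avoids quasi-isometry constants entirely and argues by compactness: there are finitely many $\Aut(\pi_1)$-orbits of "turns" (pairs of directions at a vertex) in $\widetilde G$, and for each the maximal cancellation between $[f]$ of the two incident edge-germs, extended maximally, is realized on a specific finite configuration; one takes $C_f$ to be the maximum over these finitely many orbits of that cancellation length, then shows by induction on $|\rho_1|+|\rho_2|$ (peeling one edge at a time and using that reducing $[f(\rho_1 e)]\cdot[f(e^{-1}\rho_2)]$ cannot create more than $C_f$ new cancellation beyond what was already accounted for) that total cancellation never exceeds $C_f$. I expect to present the compactness/induction version, as it keeps the constant transparent and avoids the bookkeeping of $\lambda$ and $\varepsilon$.
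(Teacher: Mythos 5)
The paper itself gives no proof of this lemma (it is quoted from Cooper), so the comparison is with the standard argument, which is precisely your first route: lift to the universal cover, use that $\widetilde f$ is a quasi-isometry of the tree because $f$ is a homotopy equivalence, and bound the Gromov product $(p\cdot r)_q$. The approach is right, but your key step does not close as written. From the three pairs $(m,v)$, $(v,\widetilde t(\rho))$, $(m,\widetilde t(\rho))$ the quasi-isometry inequalities only yield $d(m,v)\le\lambda\,(d(\widetilde f(m),q)+\varepsilon)$, and $d(\widetilde f(m),q)$ is, up to $\lambda$, exactly the overlap length you are trying to bound -- the estimate is circular, and a direct chase with these pairs produces an error term growing like $(\lambda-\tfrac1\lambda)\,d(v,\widetilde t(\rho))$ rather than a constant. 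The missing idea is to use both sides of $v$: any path in a tree between two points contains the geodesic between them, so $\widetilde f(\widetilde\rho_1)$ passes through the median $z$ of $p,q,r$, and so does $\widetilde f(\widetilde\rho_2)$. Hence there are vertices $m\in\widetilde\rho_1$ and $m'\in\widetilde\rho_2$ with $\widetilde f(m)$ and $\widetilde f(m')$ within $\lambda$ of $z$ (take the endpoints of the edges whose images contain $z$); the lower quasi-isometry bound gives $d(m,m')\le\lambda(2\lambda+\varepsilon)$, and since $m,m'$ lie on opposite sides of $v$ along the geodesic $\widetilde\rho$ this bounds $d(m,v)$; then the upper bound gives $d(q,z)=(p\cdot r)_q\le\lambda\, d(m,v)+\varepsilon+\lambda$, a constant depending only on $f$. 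Your ``spur'' remark gestures at this, but it is this two-sided comparison at $z$, not an appeal to the far endpoint $\widetilde t(\rho)$, that makes the constant uniform.

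The alternative you say you would actually write up -- finitely many orbits of turns, a constant per orbit, and an induction peeling one edge at a time -- is not viable as described. The cancellation between $[f(\rho_1)]$ and $[f(\rho_2)]$ is not a local function of the turn at the concatenation vertex: it can propagate arbitrarily far into the images of earlier edges, and the phrase ``extended maximally, realized on a specific finite configuration'' is exactly the unproved point, while the inductive step ``cannot create more than $C_f$ new cancellation beyond what was already accounted for'' restates the lemma. More tellingly, that sketch never uses that $f$ is a homotopy equivalence ($\pi_1$-injectivity), and without that hypothesis the statement is false: on a rose with petals $a,b$, the (non-$\pi_1$-injective) map $a\mapsto a$, $b\mapsto a$ has finitely many turns but unbounded cancellation, e.g.\ for $\rho_1=a^n$, $\rho_2=b^{-n}$. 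Any correct proof must invoke the homotopy inverse, as your quasi-isometry route does (or as in the Bestvina--Feighn--Handel variant, where one factors $f$ through Stallings folds and uses that the bounded cancellation constant of a composition is controlled by Lipschitz constants and the constants of the factors). So present the tree/quasi-isometry argument, repaired as above, and drop the turns-plus-induction version.
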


\begin{definition}[Goodness]Let $\gamma$ be a reduced edge path in $G$ and $\gamma=\gamma_1\cdot\gamma_2\cdot\ldots\cdot\gamma_m$ be a splitting of $\gamma$ into edge paths $\gamma_i$. Define $\g_{CT}(\gamma)$ to be the proportion of the sum of the lengths of $\gamma_i$'s that have a complete splitting to the total length of $\gamma$. Define \emph{goodness} of $\gamma$, denoted $\g(\gamma)$, as the supremum of $\g_{CT}(\gamma)$ over all splittings of $\gamma$ into edge paths. Since there are only finitely many decompositions of an edge path into sub-edge paths the value $\g(\gamma)$ is realized for some splitting of $\gamma$. We will call the splitting for which $\g(\gamma)$ is realized the \emph{maximal edge splitting} of $\gamma$. The subpaths that are part of a complete splitting in the maximal edge splitting will be called \emph{good}. The subpaths in the maximal edge splitting which do not admit complete splittings will be called \emph{bad}. 

Let $w\in\FN$ be a conjugacy class in $\FN$, and $\gamma_w$ be the unique circuit in $G$ that represents $w\in\FN$. We define the \emph{goodness} of the conjugacy class $w$ as $\g(w):=\g(\gamma_w)$. 
\end{definition}

\begin{remark} The properties of $\CT$'s, see Lemma \ref{CTmaps} items (\ref{splits1}) and (\ref{splits2}), imply that forward images of good paths are always good, and forward images of bad paths are eventually good.  
\end{remark}

\begin{proposition} \label{lotsofexpanding} Let $f:G\to G$ be a $\CT$ representing an atoroidal outer automorphism $\varphi\in\Out(\FN)$. There exists $s>0$ such that for any completely split edge path $\sigma$ such that $|\sigma|$ is sufficiently big
\[
\frac{(\text{total length of expanding splitting units in } \sigma )}{|\sigma|}\ge s\]

\end{proposition}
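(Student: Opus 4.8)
The plan is to show that in a completely split path, the splitting units that are \emph{not} expanding (fixed edges, linear edges, $\INP$'s with no expanding content, and eventually-fixed zero-stratum connecting paths) occupy a proportion of the total length that is bounded strictly below $1$, so that the expanding splitting units occupy at least a definite proportion $s>0$. The key point is that non-expanding splitting units have a uniformly bounded length (their forward images stay bounded, so by complete splitting the paths themselves are bounded), while expanding units, once present, force growth. So the potential danger is a long path made entirely of short non-expanding units. I would rule this out by a \emph{reachability} argument: because $\varphi$ is atoroidal there is no closed Nielsen path, so the union of the non-expanding strata $Z \subset G$ cannot carry a circuit, i.e.\ $\pi_1$ of each component of $Z$ is trivial; equivalently $Z$ is a forest. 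Hence any reduced edge path lying entirely in $Z$ has length bounded by the number of edges in $G$.

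Concretely, first I would set $L_0$ to be the maximum length of a completely split path all of whose splitting units are non-expanding; by the previous remark such a path is embedded in the forest $Z$ (its splitting units are fixed edges and $\INP$'s/exceptional-type units supported on non-expanding strata, but for an atoroidal $\CT$ there are no linear edges, and $\INP$'s of $\EG$ height contain $\EG$ edges hence are expanding), so $L_0 \le |E_{top}G|$ is finite. Second, given a completely split $\sigma$ with $|\sigma|$ large, look at its unique complete splitting $\sigma = \sigma_1\cdot \sigma_2 \cdots \sigma_m$. Group the $\sigma_i$ into maximal runs of consecutive non-expanding units separated by expanding units. Each maximal non-expanding run has length $\le L_0$, and each expanding unit has length $\ge 1$. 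Therefore, writing $k$ for the number of expanding units among the $\sigma_i$, we get $|\sigma| \le (k+1)L_0 + (\text{length of expanding units})$; combined with the trivial bound (length of expanding units) $\ge k$, this gives
\[
(\text{length of expanding units in } \sigma) \;\ge\; \frac{|\sigma| - L_0}{L_0 + 1}.
\]
Third, for $|\sigma|$ sufficiently large (say $|\sigma|\ge 2L_0$) the right-hand side is at least $\tfrac{|\sigma|}{2(L_0+1)}$, so we may take $s = \tfrac{1}{2(L_0+1)}$.

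The main obstacle is justifying that $Z$ is a forest and, relatedly, pinning down exactly which splitting units are non-expanding for an atoroidal $\CT$. This requires assembling the $\CT$ facts in the excerpt: by the fact preceding the definition of expanding splitting units, every Nielsen path is a legal concatenation of $\INP$'s and fixed edges, and there are no linear edges (no closed Nielsen path), so no exceptional paths occur; the only $\INP$'s that fail to expand are the ones of $\NEG$ height, and for those Lemma~\ref{CTmaps}(8) plus the description of $\NEG$ edges shows they live in fixed/zero/superlinear strata — and a zero-stratum connecting path is non-expanding only when its iterated images never meet an $\EG$ or superlinear stratum. I would argue that the union of all strata that can carry such a path, together with the fixed edges, supports no circuit: a circuit there would be a periodic (indeed, for a $\CT$, an honest) Nielsen circuit, contradicting atoroidality via the fact that $[f^k(\gamma)]$ would be completely split with only non-expanding units, forcing a fixed conjugacy class. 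Once this finiteness ($L_0<\infty$) is secured, the counting argument above is routine.
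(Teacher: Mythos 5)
There is a genuine gap in your finiteness argument for $L_0$. Your counting scheme (bound the length of maximal runs of non-expanding splitting units, then conclude a ratio of order $1/(L_0+1)$) is essentially the paper's, but the way you bound $L_0$ fails. The parenthetical claim that ``$\INP$'s of $\EG$ height contain $\EG$ edges hence are expanding'' is false: by definition a splitting unit $\sigma$ is expanding when $|[f^n(\sigma)]|\to\infty$, and an $\INP$ satisfies $[f(\rho)]=\rho$, so it is never expanding --- yet by Lemma~\ref{CTmaps} (item (7)) an $\EG$ stratum $H_r$ may carry an $\INP$ of height $r$ whose initial and terminal edges lie in $H_r$. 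Atoroidality only rules out \emph{closed} Nielsen paths, not height-$r$ $\INP$'s. Consequently a completely split path all of whose units are non-expanding need not lie in the union $Z$ of non-expanding strata: the dangerous configuration is a long legal concatenation of $\EG$-height $\INP$'s and fixed edges, which your forest argument does not see at all, so the bound $L_0\le |E_{top}G|$ is unjustified. (Whether $Z$ itself is a forest is then beside the point.)

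The paper closes exactly this hole differently: it bounds the non-expanding material between expanding units by two separate constants --- $Z_0$, a bound on maximal zero-stratum connecting paths (zero strata are contractible components, and by Lemma~\ref{CTmaps}(\ref{zerolink}) such a path is adjacent to an $\EG$ edge in the complete splitting), and $Z_1$, a bound on the length of any concatenation of $\INP$'s and fixed edges, which is where atoroidality enters --- and then runs the same counting to get $s=\tfrac{1}{2Z+1}$ with $Z=\max\{Z_0,Z_1\}$. Your concluding ``reachability'' idea can be repaired in this spirit: the non-expanding splitting units form a finite set of paths with endpoints at vertices (finitely many $\INP$'s, fixed edges, and zero-stratum connecting paths), so a sufficiently long concatenation of them must, by pigeonhole on vertices, contain a closed sub-concatenation; that closed subpath is completely split into non-expanding units, hence its $f$-iterates have bounded length and it yields a periodic (hence fixed) conjugacy class, contradicting atoroidality. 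But that argument must be made at the level of splitting units concatenated at vertices, not at the level of the subgraph of non-expanding strata, which is where your proposal as written breaks down.
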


\begin{proof} Let $\sigma$ be a completely split edge path, and consider its complete splitting. By properties of $\CT$'s (Lemma \ref{CTmaps} (\ref{zerolink})) each maximal connecting path in a zero stratum necessarily followed by an edge in an $\EG$ stratum. Since zero strata are precisely the contractible components, there is an upper bound for the length of any maximal connecting path in a zero stratum, say $Z_0$. Since $\varphi$ is atoroidal, there is also an upper bound for the length of any path that is a concatenation of $\INP$'s and fixed edges, say $Z_1$. Let $Z=\max\{Z_0,Z_1\}$. From these two observations it follows that for any completely split edge path of length$\ge 2Z+1$, we have 
\begin{align*}
& \frac{(\text{total length of expanding splitting units in } \sigma )}{|\sigma|} \ge \\ &
\frac{(\text{total length of } \EG \text{ or superlinear edges in } \sigma )}{|\sigma|}\ge
\frac{1}{2Z+1}
\end{align*}

\end{proof}

\begin{convention-remark}\label{passtoexpand} Note that the values $Z_0, Z_1$ and hence $Z$ are valid for all powers of $f$. From now on, we will replace $\varphi$, and hence $f$ with a power (which we will still denote by $f$) so that each expanding splitting unit grows at least by a factor of $2(2Z+1)$.  
\end{convention-remark}

\begin{definition}[Short and long good paths] In light of Proposition \ref{lotsofexpanding} we will call a good segment $\gamma$ \emph{long good segment} if $|\gamma|\ge2Z+1$ and \emph{short good segment} if $|\gamma|\le2Z$. 
\end{definition}

\begin{lemma}\label{boundonbad} Let $C_f$ be the bounded cancellation constant, and set $C:=\max\{C_f, 2Z+1\}$. Let $\gamma=\gamma_1\gamma_2$ be an edge path such that $\gamma_1$ and $\gamma_2$ are completely split. Then, any edge that is $C$ away from the turn $\{\gamma_1^{-1}, \gamma_2\}$ is good. 
\end{lemma}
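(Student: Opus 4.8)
The plan is to reduce the statement to a single application of the Bounded Cancellation Lemma together with the observation that the maximal edge splitting of $\gamma$ cannot have a ``bad block'' that extends far from the concatenation point. First I would make precise the concern: $\gamma_1$ and $\gamma_2$ are each completely split, hence each is good on its own; the only obstruction to $\gamma = \gamma_1\gamma_2$ being completely split (and hence good) is cancellation or ``interference'' at the turn $\{\gamma_1^{-1}, \gamma_2\}$. So the heart of the argument is to localize that interference to a segment of length at most $C = \max\{C_f, 2Z+1\}$ on each side of the turn. Concretely, write $\gamma_1 = \alpha_1 \cdot \beta_1$ where $\beta_1$ is the terminal subpath of $\gamma_1$ consisting of the last $C_f$ edges (or all of $\gamma_1$ if it is shorter), and similarly $\gamma_2 = \beta_2 \cdot \alpha_2$ with $\beta_2$ the initial $C_f$ edges. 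After tightening, $[\gamma] = [\alpha_1' \cdot \beta \cdot \alpha_2']$ where, by Bounded Cancellation, no edge of $\alpha_1$ beyond the last $C_f$ gets cancelled and no edge of $\alpha_2$ beyond the first $C_f$ gets cancelled; thus the surviving copies of $\alpha_1$ and $\alpha_2$ inside $[\gamma]$ retain their complete splittings (a complete splitting of $\gamma_1$ restricts to one on the subpath $\alpha_1$, up to truncating at a splitting-unit boundary, which costs at most one extra splitting unit of length $\le 2Z$, hence the $2Z+1$ term in the definition of $C$).

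Next I would handle the middle segment $\beta$. Its length is bounded by $2C_f \le 2C$, but I want a good segment running up to within $C$ of the turn from \emph{each} side, so the accounting needs the constant $2Z+1$ to absorb the fact that the complete splitting of $\gamma_1$ (resp.\ $\gamma_2$) need not have a splitting-unit boundary exactly $C_f$ edges from the end. The point is: take the maximal edge splitting of $\gamma_1$; it is its complete splitting, and the last splitting unit has length at most $2Z$ unless it is an $\EG$ or superlinear edge contributing length $1$ — in all cases, there is a splitting-unit boundary within $2Z$ edges of the terminus of $\gamma_1$. Cutting $\gamma_1$ at the last such boundary lying at distance $\ge C_f$ from the terminus gives a good initial subpath of $\gamma_1$ whose complement has length $\le C_f + 2Z \le C$; and by Bounded Cancellation this good initial subpath survives in $[\gamma]$ unchanged. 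The symmetric statement holds for $\gamma_2$. Therefore every edge of $[\gamma]$ at distance $> C$ from the turn $\{\gamma_1^{-1}, \gamma_2\}$ lies inside one of these two surviving good subpaths, hence is good.

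The main obstacle, and the step I would be most careful about, is the interaction between the \emph{metric} notion of ``$C$ away from the turn'' and the \emph{combinatorial} notion of a splitting-unit boundary: complete splittings are defined only at vertices that are splitting-unit endpoints, so one cannot simply truncate $\gamma_i$ at an arbitrary edge and retain a complete splitting. This is exactly why the definition of $C$ packages together $C_f$ (to kill cancellation) and $2Z+1$ (to guarantee a nearby legitimate cut point, using that maximal connecting paths in zero strata have length $\le Z_0$ and maximal legal concatenations of $\INP$'s and fixed edges have length $\le Z_1$, so $Z = \max\{Z_0, Z_1\}$ bounds the distance to the next splitting-unit boundary). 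Once that bookkeeping is set up, everything else is a direct invocation of Lemma \ref{BCL} and the remark following the goodness definition that forward images — and here, subpaths cut at splitting-unit boundaries — of good paths are good. I would write the argument so that the two ingredients, ``$C_f$ controls cancellation'' and ``$2Z+1$ controls the position of cut points,'' are visibly the only two facts used.
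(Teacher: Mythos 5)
Your proposal has a genuine gap: it treats the lemma as a statement about one round of cancellation, when in fact ``good'' is defined through \emph{splittings}, i.e.\ decompositions $\gamma=\alpha_1\cdot\beta\cdot\alpha_2$ satisfying $[f^{k}(\gamma)]=[f^{k}(\alpha_1)][f^{k}(\beta)][f^{k}(\alpha_2)]$ for \emph{every} $k\ge 1$. Note first that there is nothing to tighten at level zero: $\gamma=\gamma_1\gamma_2$ is already an edge path, and Lemma \ref{BCL} bounds cancellation when $f$ is applied to a reduced concatenation, not cancellation incurred by concatenating two reduced paths, so your ``after tightening, $[\gamma]=[\alpha_1'\cdot\beta\cdot\alpha_2']$'' step is a misreading of what must be controlled. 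The real issue is the turn $\{\gamma_1^{-1},\gamma_2\}$ under iteration: each application of $f$ can cancel up to $C_f$ edges at that turn measured in the \emph{image}, and without further input nothing prevents the bad region from creeping, over many iterates, past any fixed neighborhood of the turn in the original coordinates. Your argument never addresses $k\ge 2$ (at best, reinterpreted, it handles $k=1$), so it does not establish that your decomposition is a splitting, and hence cannot conclude that the far edges are good. The missing ingredient—and the actual content of the paper's proof—is the growth estimate: after the power choice of Convention-Remark \ref{passtoexpand}, every completely split path of length $\ge 2Z+1$ at least doubles under $f$ (via Proposition \ref{lotsofexpanding}). The completely split buffer of length $\ge C\ge\max\{C_f,2Z+1\}$ between the turn and the far edges therefore grows strictly faster than the at most $C_f$ edges consumed per iterate, so by induction the cancellation remains confined to within $C$ of the turn for all time; cutting at splitting-unit boundaries there yields a genuine splitting with completely split outer pieces. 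This also shows that you misidentified the role of $2Z+1$ in the definition of $C$: its primary purpose is as the doubling threshold, not merely to locate a nearby legitimate cut point.

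What is salvageable from your write-up is the bookkeeping about cut points: every splitting unit of a $\CT$ representing an atoroidal automorphism has length at most $Z$, so a splitting-unit boundary exists within $Z$ of any prescribed position, and a union of consecutive splitting units of a completely split path is again completely split. That correctly produces the candidate decomposition, but it is only the preamble. To finish you must run the iterative argument sketched above (maintaining, by induction on $k$, that the cancelled terminal segment of $[f^{k}(\gamma_1)]$ stays inside the image of the buffer, using bounded cancellation once per iterate against the doubling of the buffer), and this is exactly where Proposition \ref{lotsofexpanding} and Convention-Remark \ref{passtoexpand} are indispensable; it is also what makes the $k$-uniform bound of Lemma \ref{nomorebad} a consequence of this lemma.
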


\begin{proof} Since any completely split path of length $\ge 2Z+1$ grows at least by a factor of $2$, bounded cancellation lemma dictates that reducing $f(\gamma_1\gamma_2)$ will not result in any cancellation at edges $C$ away from the concatenation point, hence the claim follows. 
\end{proof}

\begin{lemma}\label{nomorebad} For any edge path $\gamma$ the total length of bad subpaths in $[f^{k}(\gamma)]$ is uniformly bounded by $|\gamma|2C$. 
\end{lemma}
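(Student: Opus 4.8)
The plan is to argue that bad subpaths in $[f^k(\gamma)]$ can only arise near the finitely many ``seams'' produced by iterating the maximal edge splitting of $\gamma$, and that each such seam contributes a boundedly small amount of badness which does \emph{not} grow under further iteration. First I would fix the maximal edge splitting $\gamma = \gamma_1\gamma_2\cdots\gamma_m$ realizing $\g(\gamma)$, so that the total length of bad pieces among the $\gamma_i$ is at most $|\gamma|$; in fact since each bad piece has length $\ge 1$ there are at most $|\gamma|$ of them, and likewise at most $|\gamma|$ good/bad junctions. The key structural input is Lemma \ref{boundonbad}: wherever we concatenate two completely split paths, every edge that is $C$ away from the concatenation point is good. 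Applying the map $f$ once to $\gamma$, the image $[f(\gamma)]$ decomposes as the concatenation of the tightened images $[f(\gamma_i)]$ (this is the defining property of a splitting) together with bounded cancellation at the $\le m \le |\gamma|$ junctions; by Lemma \ref{CTmaps}(\ref{splits1}) the image of a good piece is again completely split, hence good, and by the Remark after the goodness definition the image of a bad piece is eventually good and in any case its \emph{length} does not exceed $C_f$ times the original (the bound $|[f(\beta)]| \le \max_e |f(e)| \cdot |\beta|$ is cruder than needed — what matters is that bad pieces have bounded total length independent of $k$, not that they shrink).

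The heart of the argument is the inductive step showing the total bad length does not accumulate. I would set up the induction on $k$ with hypothesis: the total length of bad subpaths in $[f^k(\gamma)]$ is at most $|\gamma|\cdot 2C$. For the base case $k=0$: in the maximal edge splitting the bad pieces themselves total at most $|\gamma|$, which is $\le |\gamma|\cdot 2C$ since $C\ge 1$. For the inductive step, write $[f^k(\gamma)]$ as the concatenation, along the maximal edge splitting of $[f^{k-1}(\gamma)]$ pushed forward, of good completely-split blocks and bad blocks; by the inductive hypothesis the bad blocks total $\le |\gamma|\cdot 2C$ in length, so there are at most $|\gamma|\cdot 2C$ maximal good blocks interleaved with them, hence at most $2|\gamma|\cdot 2C$ concatenation seams. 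Now apply $f$: each good block maps to a completely split (hence good) path; by Lemma \ref{boundonbad} any newly created badness is confined to within distance $C$ of a seam, contributing at most $2C$ per seam; and each old bad block maps to a path that is, after this single application, again of bounded size and in fact becomes good after finitely many more steps. The subtlety — and the step I expect to be the main obstacle — is handling the bad blocks correctly: one must argue that an old bad block of length $\ell$ produces, under one application of $f$, a region of non-good edges of length at most $\ell$ (not $C_f \cdot \ell$), because the bad block sits between two good completely-split pieces whose images absorb, via Lemma \ref{boundonbad}, all but a $C$-collar of the expansion — so the bad region does not grow, it is merely transported. Combining, the total bad length in $[f^k(\gamma)]$ is at most (old bad length, transported, $\le |\gamma|\cdot 2C$ but actually non-increasing once we account for blocks turning good) plus (seam contributions) — and a careful bookkeeping with the constant $C := \max\{C_f, 2Z+1\}$ shows the whole thing stays $\le |\gamma|\cdot 2C$.

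Concretely, I would organize it as: (i) reduce to $\gamma$ completely split up to finitely many bad pieces of total length $\le |\gamma|$; (ii) prove the single-step claim ``badness in $[f(\delta)]$, for $\delta$ a concatenation of good blocks and $b$ bad blocks of total length $L_{\mathrm{bad}}$, is at most $L_{\mathrm{bad}} + 2Cb$'' using Lemma \ref{boundonbad} and Lemma \ref{CTmaps}(\ref{splits1}); (iii) observe $b \le L_{\mathrm{bad}} + (\text{number of original bad pieces}) \le L_{\mathrm{bad}} + |\gamma|$, wait — more carefully, the number of bad \emph{blocks} is controlled by the number of original junctions, which is $\le |\gamma|$, plus newly created ones at seams; (iv) close the induction, checking the constant $2C$ works because new seam contributions are paid for by the slack between $|\gamma|$ and $2C|\gamma|$. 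The essential point that makes the constant uniform in $k$ is that \emph{old} bad length is transported with at worst bounded distortion while \emph{simultaneously} old bad blocks are being converted to good ones, and new badness is only ever a $C$-collar around a bounded number of seams — so the two effects balance and the bad length does not run away.
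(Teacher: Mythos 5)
Your proposal has a genuine gap at what you yourself call the heart of the argument. The claim that an old bad block of length $\ell$ contributes, after one application of $f$, a non-good region of length at most $\ell$ (``transported, not expanded'') because the neighboring good blocks ``absorb all but a $C$-collar of the expansion'' is not justified, and Lemma \ref{boundonbad} cannot supply it: that lemma only controls a concatenation of \emph{two completely split} paths and says nothing about the interior of the image of a piece that is not completely split. A bad piece of length $\ell$ maps to a reduced path of length up to $\ell\cdot\max_e|f(e)|$, and it only becomes completely split after a number of iterations depending on $\ell$ (this is exactly why the paper later has to choose exponents $t'$, $t_b$, $T_2$ for paths of bounded length). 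In addition, your inductive bookkeeping does not close on its own terms: the number of seams in the step from $[f^{k-1}(\gamma)]$ to $[f^k(\gamma)]$ is bounded only by the inductive bad-length bound $2C|\gamma|$, not by $|\gamma|$, so the new collar badness is of order $C^2|\gamma|$ per step, and the asserted ``balancing'' against old bad blocks turning good is never proved.

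The intended argument is much shorter and avoids any induction on the good/bad structure of the intermediate images: write $\gamma=e_1e_2\cdots e_{|\gamma|}$ as a concatenation of its edges. By the properties of $\CT$'s (Lemma \ref{CTmaps}(\ref{splits1}), (\ref{splits2}) and the fact that images of completely split paths are completely split), each $[f^{k}(e_i)]$ is completely split, so $[f^{k}(\gamma)]$ is obtained by tightening a concatenation of at most $|\gamma|$ completely split paths with at most $|\gamma|$ junctions. Applying Lemma \ref{boundonbad} at each junction, every bad edge of $[f^{k}(\gamma)]$ lies within distance $C$ of one of these junctions, contributing at most $2C$ per junction, hence at most $2C|\gamma|$ in total, uniformly in $k$. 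Your plan works with the maximal edge splitting of $\gamma$ (which retains bad pieces whose images you cannot control) instead of the edge-by-edge decomposition (all of whose pieces have completely split images), and that choice is what forces the unprovable transport claim.
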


\begin{proof} This is an easy consequence of Lemma \ref{boundonbad}. 
\end{proof}

We first show that, up to passing to further powers, the goodness is \emph{monotone}. 

\begin{lemma}\label{gettingbetter} Let $f:G\to G$ be a $\CT$ representing an atoroidal outer automorphism $\varphi\in\Out(\FN)$. There exists an exponent $t'\ge1$ such that for any circuit $\gamma$ with $1>\g(\gamma)>0$, and for all $t\ge t'$ one has
\[
\g([f^{t}(\gamma)])>\g(\gamma). 
\]
\end{lemma}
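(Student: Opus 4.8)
The plan is to show that each application of $f$ (or a sufficiently high power) strictly increases the proportion of $\gamma$ that is good, with a gain bounded below by a quantity depending only on $f$, so long as the goodness is strictly between $0$ and $1$. The starting point is the maximal edge splitting $\gamma=\gamma_1\cdot\gamma_2\cdot\ldots\cdot\gamma_m$ realizing $\g(\gamma)$, where the good $\gamma_i$'s have complete splittings and the bad ones do not. By the remark following the goodness definition (together with Lemma \ref{CTmaps} items (\ref{splits1}) and (\ref{splits2})), forward images of good subpaths remain good, so the good part of $\gamma$ contributes a good part of $[f^t(\gamma)]$ of length at least the expansion factor times $\g(\gamma)|\gamma|$ — minus bounded cancellation at the finitely many concatenation points, which is controlled by Lemma \ref{boundonbad}/\ref{nomorebad}. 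Meanwhile, since $\g(\gamma)<1$, there is at least one bad subpath $\gamma_j$; by Lemma \ref{CTmaps}(\ref{splits2}) its image $[f^t(\gamma_j)]$ is completely split for $t$ large, hence entirely good, contributing an additional chunk of good length to $[f^t(\gamma)]$ that was not good before.

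The key quantitative point is to compare $\g([f^t(\gamma)]) = \frac{(\text{good length in }[f^t(\gamma)])}{|[f^t(\gamma)]|}$ against $\g(\gamma)$. Writing $|\gamma| = G + B$ where $G$ is the total good length and $B$ the total bad length (so $\g(\gamma) = G/(G+B)$ and $B \ge 1$), I would estimate: the good length of $[f^t(\gamma)]$ is at least $\lambda_G \cdot G + \lambda_B \cdot B - (\text{cancellation loss})$, where $\lambda_G, \lambda_B \ge 1$ are lower bounds on the growth of the good and (formerly-bad) pieces and the cancellation loss is at most $2Cm$ by Lemma \ref{nomorebad} (with $m$ bounded in terms of $|\gamma|$, or better, bounded in terms of the number of bad pieces plus boundaries, which after passing to a power of $f$ is negligible compared to the expansion). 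The total length $|[f^t(\gamma)]|$ is at most $\Lambda(G+B)$ for the top growth rate $\Lambda$. The crucial observation is that by Proposition \ref{lotsofexpanding} and Convention-Remark \ref{passtoexpand}, once we pass to a power of $f$, every completely split path of length $\ge 2Z+1$ — in particular the image of any long good subpath, and the image of any bad subpath once it becomes completely split — grows by a large definite factor, so the cancellation losses are a vanishingly small fraction. The main subtlety is that short good segments need not individually expand, so I must group them: after one or two applications of $f$, each bad piece's image is completely split of definite length and the concatenations of short good segments either reorganize into expanding units or are absorbed; the cleanest route is to choose $t'$ large enough (using Lemma \ref{CTmaps}(\ref{splits2}) applied to each of the finitely many isomorphism types of short good segments and to each bad piece) that all formerly-bad and all short-good material has become long completely split material with large expansion.

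Concretely, I would argue: fix $t'$ so that (i) for every bad subpath $\beta$ of length $\le |\gamma|$ — but really, by passing to powers, one only needs finitely many "types" since the bad length in any image is uniformly bounded by Lemma \ref{nomorebad} — $[f^{t'}(\beta)]$ is completely split; and (ii) the expansion factor of every expanding splitting unit over $t'$ iterates exceeds, say, $100 C$. Then for $t \ge t'$, in $[f^t(\gamma)]$ the former good part and the former bad part are both now entirely good (modulo at most $2Cm$ cancellation edges, $m$ the number of splitting units of the maximal edge splitting), so $\g([f^t(\gamma)]) \ge \frac{\lambda(G+B) - 2Cm}{\Lambda(G+B)}$ where $\lambda$ is the minimal expansion among the relevant pieces; comparing with $G/(G+B)$ and using $B \ge 1$, $\lambda > \Lambda \cdot \g(\gamma)$ is exactly what forces strict increase. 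I expect the main obstacle to be the bookkeeping around short good segments and cancellation at the $O(m)$ concatenation points: one must ensure that after passing to the power in Convention-Remark \ref{passtoexpand} these genuinely contribute a negligible fraction, which requires being careful that $m$ itself does not grow faster than the length — but since $m$ is at most $|\gamma|$ and the relevant pieces expand by a factor $\gg C$, this is manageable. An alternative, possibly cleaner, presentation is to argue locally: each bad subpath contributes a bounded ($\le Z_1$ or $\le Z_0$) amount of bad length, and after $t'$ iterates it is swallowed into good length growing by a large factor, while no good length is ever lost beyond bounded cancellation; normalizing shows the bad fraction strictly decreases.
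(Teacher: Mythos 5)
Your overall plan (good stays good, bad eventually becomes completely split, control losses by bounded cancellation) is the right flavor, but the quantitative core does not work. Your lower bound $\g([f^t(\gamma)])\ge\frac{\lambda(G+B)-2Cm}{\Lambda(G+B)}$, with $\lambda$ the minimal and $\Lambda$ the maximal growth rate, decouples numerator and denominator and degrades to roughly $\lambda/\Lambda$; the condition you then need, $\lambda>\Lambda\,\g(\gamma)$, cannot be arranged, since $\g(\gamma)$ may be arbitrarily close to $1$ while $\lambda<\Lambda$ is the typical situation: good splitting units include fixed edges and $\INP$'s, which do not grow at all (so your opening assertion that the good part expands by ``the expansion factor'' already fails), and distinct $\EG$ strata have distinct PF eigenvalues. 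For a circuit whose good part is mostly non-expanding material with a single short bad segment, your bound is far below $\g(\gamma)$ even though the conclusion of the lemma clearly holds. The paper avoids any growth-rate comparison: it tracks good and bad lengths separately. Since the maximal edge splitting is a splitting, $[f^t(\gamma)]=[f^t(\gamma_1)]\cdots[f^t(\gamma_m)]$ with \emph{no} cancellation between the terms (the $2Cm$ loss you subtract at these concatenation points does not occur), so the total good length is non-decreasing; one then shows the total bad length strictly decreases, and $G/(G+B)$ with $G$ weakly increasing and $B$ strictly decreasing gives $\g([f^t(\gamma)])>\g(\gamma)$ directly.

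The second gap is uniformity of $t'$. You choose $t'$ so that $[f^{t'}(\beta)]$ is completely split for every bad subpath $\beta$ of length $\le|\gamma|$; this makes $t'$ depend on $\gamma$, which the lemma forbids, and the ``finitely many types'' remark does not repair it: the bad segments of the \emph{original} circuit can be arbitrarily long, and Lemma \ref{nomorebad} bounds bad length of images only in terms of $|\gamma|$. (Your alternative sketch's claim that each bad subpath has length at most $Z_0$ or $Z_1$ is also false --- those constants bound zero-stratum connecting paths and concatenations of $\INP$'s and fixed edges, i.e.\ good non-expanding units, not bad segments.) The missing device, which is exactly how the paper argues, is to subdivide each long bad segment into subsegments of the \emph{fixed} length $2C+1$; since $G$ is a finite graph there are only finitely many reduced paths of that length, so a single exponent $t'$ makes all their images completely split, and Lemma \ref{boundonbad} confines the surviving bad edges of $[f^t(\beta)]$ to within $C$ of each subdivision point. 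Hence the bad length of each bad segment drops by at least the number of its subsegments, so the total bad length strictly decreases, which together with the monotonicity of the good length finishes the proof without any of the expansion bookkeeping you propose.
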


\begin{proof} Note, by definition the total length of good subpaths in $\gamma$ is $\g(\gamma)|\gamma|$. Under iteration of $f$, each good segment remains good and the length of each good segment is non-decreasing. Therefore, total length of good segments in $[f^{k}(\gamma)]$ is $\ge \g(\gamma)|\gamma|$. 

Let $t'$ be an exponent such that for each edge path $\beta$ of length $\le 2C+1$, the edge path $[f^{t}(\gamma)]$ is completely split for all $t\ge t'$. Therefore, for any bad segment $\beta$ such that $|\beta|\le 2C+1$, the path $[f^{t}(\beta)]$ is completely split, hence contains no bad edges.  For any bad segment $\beta$ of length $\ge 2C+1$, divide $\beta$ into subsegments $\beta_i$ of length $2C+1$, with the exception of last segment being length $\le 2C+1$. By the choice of $t'$, each $[f^{t}(\beta_i)]$ is completely split, where the turn at concatenation points are possibly illegal. Bounded cancellation lemma dictates  that total length of bad segments decreases by at least number of subsegments, and the conclusion of the lemma follows. 

\end{proof}

\begin{convention-remark}\label{monotoneexpand} In what follows, we pass to a further power of $\varphi$ and $f$ so that each expanding splitting unit grows at least by a factor of $2(2Z+1)$ and the goodness function is monotone. We furthermore consider the bounded cancellation constant for this new power, but we continue to use $f$ and $C_f$. 
\end{convention-remark}

The following is one of the key technical lemmas in this paper. It allows us to get convergence estimates while dealing with forward iterations of $\CT$'s. 

\begin{lemma}\label{improvinggoodness} Let $\delta>0, \epsilon>0$ be given. There exists an exponent $m_{+}=m_{+}(\delta,\epsilon)$ such that for all circuits $\gamma$ with $\g(\gamma)>\delta$, we have $\g([f^{m}(\gamma)])>1-\epsilon$ for $m\ge m_{+}$. 
\end{lemma}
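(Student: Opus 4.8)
The plan is to iterate the monotone-improvement statement of Lemma~\ref{gettingbetter} a bounded number of times, where the bound depends only on $\delta$ and $\epsilon$. Since the set of possible goodness values of circuits is not obviously discrete, the main point to establish is that, starting from goodness $>\delta$, a single application of $f^{t'}$ increases goodness by a \emph{definite} amount bounded below in terms of the current goodness; this quantitative version is not literally stated but follows by inspecting the proof of Lemma~\ref{gettingbetter} together with the uniform estimate of Lemma~\ref{nomorebad}. Concretely, I would first argue that there is a function $\beta\colon(0,1)\to(0,1)$ with $\beta(x)>x$ for $x\in(0,1)$ such that $\g([f^{t'}(\gamma)])\ge \beta(\g(\gamma))$ for every circuit $\gamma$, and such that on any compact subinterval $[\delta,1-\epsilon']$ one has $\beta(x)-x\ge c(\delta,\epsilon')>0$.

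To produce such a $\beta$ with a uniform gap, the key quantitative input is the following: by Lemma~\ref{nomorebad}, the total length of bad subpaths in $[f^k(\gamma)]$ is at most $2C|\gamma|$; but under our standing conventions (Convention-Remarks~\ref{passtoexpand} and \ref{monotoneexpand}) each expanding splitting unit grows by a factor of at least $2(2Z+1)$, so the good part of $[f^{t'}(\gamma)]$ has length at least (by Proposition~\ref{lotsofexpanding}) a definite fraction $s$ of the total expansion of the good part of $\gamma$, which itself grows by a definite factor. Writing $\ell$ for $|\gamma|$, the good length in $\gamma$ is $\g(\gamma)\ell$, and after applying $f^{t'}$ the total length is at most (some fixed expansion constant) $\lambda_{\max}^{t'}\ell$, while the good length is at least $2(2Z+1)\cdot s\cdot\g(\gamma)\ell$ minus bounded-cancellation error $2C_f\cdot(\text{number of splitting units})$, and the number of splitting units in $\gamma$ is itself $\le \ell$. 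Balancing these gives $\g([f^{t'}(\gamma)])\ge \g(\gamma)\cdot\big(\tfrac{2(2Z+1)s}{\lambda_{\max}^{t'}}\big) - \tfrac{2C_f}{\lambda_{\max}^{t'}}$; after a further fixed power (absorbed into $f$ via the conventions) the leading coefficient exceeds $1$, and for $\g(\gamma)\ge\delta$ bounded away from $1$ the additive error is dominated, yielding a uniform multiplicative improvement $\g([f^{t'}(\gamma)])\ge(1+\kappa)\g(\gamma)$ with $\kappa=\kappa(\delta)>0$, valid as long as $\g(\gamma)\le 1-\epsilon$.

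Granting this, I set $m_+ := t'\cdot\lceil \log_{1+\kappa}(1/\delta)\rceil$ (or the analogous bound phrased additively): starting from $\g(\gamma)>\delta$, after at most $\lceil\log_{1+\kappa}((1-\epsilon)/\delta)\rceil$ applications of $f^{t'}$ the goodness either already exceeds $1-\epsilon$ or has been multiplied past $1-\epsilon$, and since goodness never decreases under forward iteration (Lemma~\ref{gettingbetter} and the remark that forward images of good paths are good), it stays above $1-\epsilon$ for all larger exponents. One subtlety to handle carefully: when $\g(\gamma)=1$ already there is nothing to prove, and when $\g(\gamma)$ is strictly between $1-\epsilon$ and $1$ the multiplicative estimate may fail, but monotonicity alone suffices there.

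I expect the main obstacle to be the bookkeeping in the second paragraph: extracting a genuinely \emph{uniform} (independent of $\gamma$ and $|\gamma|$) lower bound on the per-step improvement of goodness, since the naive application of Lemma~\ref{gettingbetter} only gives strict increase, not a quantitative gap. The resolution is to track lengths rather than proportions — good length grows multiplicatively by a controlled factor while bad length grows at most by the fixed expansion constant and is in any case re-absorbed by Lemma~\ref{nomorebad} — and to invoke the standing conventions that let us assume the expansion factor of expanding splitting units is as large as we like relative to $\lambda_{\max}$ and to $C_f$.
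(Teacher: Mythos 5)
There is a genuine gap, and it sits exactly where you flag the ``main obstacle'': your uniform per-step improvement rests on the claim that the good part of $\gamma$ contains a definite fraction $s$ of expanding splitting units and therefore grows by a definite multiplicative factor under $f^{t'}$. But Proposition~\ref{lotsofexpanding} only applies to completely split paths of length at least $2Z+1$ (long good segments). A good segment of length $\le 2Z$ may consist entirely of non-expanding units --- $\INP$'s and fixed edges --- and such segments do not grow at all under iteration. So a circuit whose goodness exceeds $\delta$ but whose good portion is dominated by short, Nielsen-type good segments defeats your estimate: the good length stays essentially constant while you have no mechanism forcing the goodness up by a uniform amount. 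This is precisely the hard case, and the paper devotes a separate argument to it (Case~2 of its proof): when short good segments dominate, it subdivides $\gamma$ into pieces each carrying $2Z+1$ of good length, uses $\g(\gamma)\ge\delta$ to show that (for at least half of the pieces) the bad length per piece is bounded by a constant $C_b(\delta)$, and then applies a \emph{fixed} power $t_b$ making all paths of length $\le C_b$ completely split, so those pieces become long good segments; only then does the expanding-unit growth argument (your mechanism, the paper's Case~1) take over. Without something playing the role of this case, iterating Lemma~\ref{gettingbetter} gives only strict, not uniform, improvement, and the induction on a bounded number of steps does not close.

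A secondary problem is the bookkeeping in your displayed estimate: you bound the total length of $[f^{t'}(\gamma)]$ by $\lambda_{\max}^{t'}\ell$ and then claim that ``after a further fixed power the leading coefficient $2(2Z+1)s/\lambda_{\max}^{t'}$ exceeds $1$.'' Passing to powers does not help here, since $\lambda_{\max}$ is raised to the same power as the guaranteed expansion of expanding units (and $\lambda_{\max}\ge 2(2Z+1)$ in general), so the ratio does not improve. The correct comparison, as in the paper's Case~1, is good length versus bad length, where the bad length of $[f^{k}(\gamma)]$ is bounded once and for all by Lemma~\ref{nomorebad} (of the order $C|\gamma|$, independent of $k$), while the expanding part of the good length grows exponentially in $k$; the goodness then tends to $1$ at a rate depending only on $\delta$, with no reference to $\lambda_{\max}$.
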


\begin{proof}

Let $\gamma$ be a cyclically reduced edge path such that $\g(\gamma)=\delta>0$. First consider the splitting of $\gamma$ into maximal good segments $a_i$ and maximal bad segments $b_i$. There are two cases to consider: 

\noindent{\bf Case 1.} First assume that
\[
\frac{(\text{total length of long good segments in } \gamma )}{(\text{total length of good segments in }  \gamma)}\ge \frac{1}{4Z+1}
\]
This gives that 

\[
\frac{\text{total length of expanding splitting units in }\gamma}{\text{total length of good segments in }\gamma}\ge \frac{1}{(2Z+1)(4Z+1)}. 
\]

Note that by Lemma \ref{nomorebad} the total length of bad segments in $[f^{k}(\gamma)]$ is uniformly bounded by $(1-\g(\gamma))|\gamma|C$. On the other hand, the assumption above together with Convention-Remark \ref{monotoneexpand} implies that 
\[
\text{total length of good segments in $[f^{k}(\gamma)]$} \ge \g(\gamma)|\gamma|\frac{1}{(2Z+1)(4Z+1)}(2Z+1)^{k} 2^k. 
\]
Therefore,

\begin{align*}
\g([f^{k}(\gamma)] & \ge \dfrac{\g(\gamma)|\gamma|\frac{1}{(2Z+1)(4Z+1)}(2Z+1)^{k}2^k}{(1-\g(\gamma))|\gamma|C+\g(\gamma)|\gamma|\frac{1}{(2Z+1)(4Z+1)}(2Z+1)^{k}2^k} \\
& =\dfrac{\g(\gamma)\frac{1}{(4Z+1)}(2Z+1)^{k-1}2^k}{(1-\g(\gamma))C+\g(\gamma)\frac{1}{(4Z+1)}(2Z+1)^{k-1}2^k}
\end{align*}
which converges to $1$ as $k\to\infty$, hence the conclusion of Lemma \ref{gettingbetter} follows for big enough $k$, say $k=m_{+}$.  

\medskip

\noindent{\bf Case 2.} Otherwise we have,

\[
\frac{(\text{total length of long good segments in } \gamma )}{(\text{total length of good segments in }  \gamma)}< \frac{1}{4Z+1}. \]
Equivalently, 
\[
\frac{(\text{total length of short good segments in } \gamma )}{(\text{total length of long good segments in }  \gamma)}\ge 4Z. \tag{3.12.2}\label{qencase2}
\]

We now subdivide the path $\gamma$ into subpaths as follows.  Consider the maximal edge splitting of $\gamma$. First subpath starts at a good edge, and it stops after tracing a total length of $2Z+1$ good segments end at a vertex such that the next edge is good. The second subpath starts at where the first path stops, and traces a total length of $2Z+1$ good segments, and stops  at a vertex such that the next edge is good. We inductively form subpaths $\gamma_1,\gamma_2, \ldots$ so that each of them contains good segments of length $2Z+1$, with the possible exception of the last subpath. Note that by construction, $\gamma_i\cdot\gamma_2\cdot\ldots\cdot \gamma_s$ is a splitting of $\gamma$.

Observe that the equation \ref{qencase2} implies that 
\[
\frac{\#\{\gamma_i \text{ containing bad segments} \}}{\#\{\gamma_i\text{ which are completely good}\}}\ge 4Z
\]
which, in turn, implies,
\[
\#\{\gamma_i \text{ containing bad segments} \}\ge \frac{s4Z}{4Z+1},
\]
where $s$ is the total number of subpaths in $\gamma$ in above subdivision. 

Since 
\[
{\text{total length of good segments in }\gamma}\le\frac{1-\g(\gamma)}{\g(\gamma)}s(2Z+1),
\]
each $\gamma_i$ above that contains a bad segment, contains
\[
\frac{1-\g(\gamma)}{\g(\gamma)}s(2Z+1)\frac{4Z+1}{s(4Z)}=\frac{(2Z+1)(4Z+1)}{4Z}\frac{(1-\g(\gamma))}{\g(\gamma)}
\]
bad edges on \emph{average}. 

Therefore, for each $\gamma$ with $\g(\gamma)\ge\delta$, at least half of the subpaths contains bad segments of total length
\[
\le \frac{(2Z+1)(4Z+1)}{2Z}\frac{(1-\delta)}{\delta}=:C_b
\]

Let $t_b>0$ be an exponent such that for all edge paths $\gamma$ with $|\gamma|\le C_b$ the path $[f^{k}(\gamma)]$ is completely split for $k\ge t_b.$ Therefore, at least half of the subsegments in the subdivision will be mapped to long good segments, and the result follows from Case 1.

\end{proof}

\begin{lemma} \label{goforward} Let $U$ a neighborhood of the simplex of attraction and a positive number $\delta_{+}>0$ be given. Then, there exists an exponent $N=N(\delta,U)$ such that for any $w\in\FN$ with $\g(w)>\delta$,
\[
(\varphi^{N})^{n}(\eta_{w})\in U\]
for all $n\ge 1$. 
\end{lemma}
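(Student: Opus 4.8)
The plan is to combine the goodness-improvement machinery of Lemma \ref{improvinggoodness} with the frequency-convergence statement of Proposition \ref{freqconverge} and the explicit description of the simplex of attraction. The point is that a current is forced into a neighborhood of the simplex of attraction once (a) almost all of its weight lies on completely split paths, and (b) the completely split part is dominated, in the measure-theoretic sense, by iterates of expanding splitting units whose frequency vectors are exactly the generators $\mu_\sigma$ of the simplex.

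First I would fix the data: given $U \supset \Delta_+$ and $\delta > 0$, the compactness of $\PCurr(\FN)$ and the weak-$*$ topology let me translate "$[\eta_w] \in U$" into finitely many inequalities of the form $|\,\langle\gamma_j,\,\cdot\,\rangle/|\cdot|\ -\ (\text{target value})\,| < \varepsilon_0$ over a finite collection of reduced edge paths $\gamma_1,\dots,\gamma_\ell$ in $G$, where the target values are those realized by currents in the simplex of attraction. Choose $\varepsilon$ small relative to $\varepsilon_0$ and to the cancellation constant $C$. Apply Lemma \ref{improvinggoodness} to get an exponent $m_+ = m_+(\delta,\varepsilon)$ with $\g([f^{m}(\gamma_w)]) > 1 - \varepsilon$ for all $m \ge m_+$; this handles a single application of $\varphi^{m_+}$. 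The content of the lemma, however, is that the conclusion holds for \emph{all} $n \ge 1$ of the iterate $\varphi^N$, so I set $N = m_+$ and must show that once goodness is above $1-\varepsilon$, further iteration does not leave $U$ — indeed it pushes the current even closer to $\Delta_+$. This is where I use that forward images of good paths stay good (the Remark after the Goodness definition) together with Lemma \ref{nomorebad}: under $[f^{kN}]$ the bad part stays boundedly small while the good part grows by the expansion factor from Convention-Remark \ref{monotoneexpand}, so $\g([f^{kN}(\gamma_w)]) \to 1$ monotonically. Thus for every $n \ge 1$ the circuit $[f^{nN}(\gamma_w)]$ is $(1-\varepsilon)$-good.

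Next I would estimate the frequencies $\langle\gamma_j, [f^{nN}(\gamma_w)]\rangle / |[f^{nN}(\gamma_w)]|$. Decompose $[f^{nN}(\gamma_w)]$ into its maximal edge splitting; the good part is a concatenation of splitting units, and by Proposition \ref{lotsofexpanding} (and the passage to powers in Convention-Remark \ref{monotoneexpand}) a definite proportion $s$ of the good length consists of expanding splitting units, whose lengths moreover dominate once $n$ is large because non-expanding units have bounded length. On each long good block, the count of $\gamma_j$-occurrences is governed by Proposition \ref{freqconverge}: after one more application of $f$ each expanding unit $\sigma$ has $\langle\gamma_j, [f^t(\sigma)]\rangle/|[f^t(\sigma)]|$ within $\varepsilon$ of $\mu_\sigma(\gamma_j)$ for $t$ large, and by replacing $N$ with a larger multiple of itself I may assume this holds inside every long good block of $[f^{nN}(\gamma_w)]$. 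Summing these blockwise estimates, with the bad part and the boundary effects at block junctions contributing at most $O(C \cdot \varepsilon \cdot |[f^{nN}(\gamma_w)]|)$ by Lemma \ref{boundonbad} and bounded cancellation, gives that the global frequency vector is within $\varepsilon_0$ of a nonnegative normalized combination of the $\mu_\sigma(\gamma_j)$'s — i.e., of a point of the simplex of attraction. Hence $[\varphi^{nN}(\eta_w)] = [\eta_{[f^{nN}(w)]}] \in U$ for all $n \ge 1$, as required.

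The main obstacle I anticipate is the bookkeeping at the seams: controlling the error incurred where one good block meets the next (or meets a bad block) uniformly in $n$, so that the finitely many frequency estimates combine into the single neighborhood condition without the accumulated junction error growing with the number of blocks. This is exactly what Lemma \ref{boundonbad} and the Bounded Cancellation Lemma are for — the number of "bad" or junction edges is linear in $|\gamma_w|$ and hence negligible compared to the exponentially growing good part — but making the dependence of $N$ on $(\delta, U)$ (and not on $w$) explicit requires care, since the length $|\gamma_w|$ and the number of blocks both depend on $w$. The resolution is that all the relevant ratios are scale-invariant: every bound is of the form (bad or junction length)$/$(good length), and Convention-Remark \ref{monotoneexpand} makes the denominator beat the numerator after a number of steps depending only on $\delta$ and the chosen accuracy $\varepsilon = \varepsilon(U)$.
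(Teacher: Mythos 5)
Your proposal is correct and takes essentially the same route as the paper: the paper's own proof just applies the goodness-improvement step (Lemma \ref{improvinggoodness}) and then cites the argument of Lemma 6.1 of \cite{LU2} with edges replaced by expanding splitting units, which is precisely the blockwise frequency estimate you spell out via Proposition \ref{freqconverge}, Proposition \ref{lotsofexpanding}, and bounded cancellation. Your extra care about augmenting $N$ so that the expanding units have been iterated long enough for their frequencies to approximate the $\mu_\sigma$'s, and about the scale-invariant control of bad/junction contributions, matches the intended (outsourced) details.
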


\begin{proof} We first apply a power of $f$ so that for every conjugacy class $w$ with $\g(w)>\delta$, we have $\g(\varphi(w))>1-\epsilon$ for small $\epsilon>0$. The rest of the proof is nearly identical to proof of Lemma 6.1 in \cite{LU2} where edges are replaced by expanding splitting units. \end{proof}

\begin{lemma}\label{goodbaddichotomy} Let $f:G\to G$ be a $\CT$ that represents an atoroidal outer automorphism. Given $0<\delta<1$, there exists an exponent $T$ such that, for any element $w\in\FN$, and for all $t\ge T$ either
\[
\g(\varphi^{t}(w))\ge\delta
\]
or 
\[
(\text{total length of bad segments in $f^{t}(\gamma_w)$})\le \frac{1}{2}(\text{total length of bad segments in $\gamma_w$}). 
\]
where $\gamma_w$ is the unique circuit in $G$ representing $w$. 

\end{lemma}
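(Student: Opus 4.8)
The plan is to combine the monotonicity statement (Lemma \ref{gettingbetter}) with the bounded-cancellation control on bad segments (Lemmas \ref{boundonbad}--\ref{nomorebad}) to set up a dichotomy: at each time step the goodness either has already crossed the threshold $\delta$, or the total length of bad segments strictly contracts, and one quantifies the contraction rate so that it happens within a uniformly bounded number of steps. Let me fix the circuit $\gamma_w$ and write $B_k$ for the total length of bad segments in $[f^{k}(\gamma_w)]$ and $L_k=|[f^{k}(\gamma_w)]|$, so $\g(\varphi^k(w)) = 1 - B_k/L_k$. The statement to prove is exactly: there is $T$ independent of $w$ so that for $t\ge T$, either $1-B_t/L_t\ge\delta$ or $B_t\le\frac12 B_0$.

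First I would record the behaviour of a single bad segment $\beta$ in $\gamma_w$: under $[f^{t}]$ it eventually becomes completely split (Lemma \ref{CTmaps}\eqref{splits2}), but in $\gamma_w$ bad segments of unbounded length can occur, so I cannot apply that uniformly. Instead I subdivide each bad segment of length $>2C+1$ into pieces of length $2C+1$ (last piece shorter), exactly as in the proof of Lemma \ref{gettingbetter}. Fix $t'$ (from Lemma \ref{gettingbetter}, or chosen directly) so that $[f^{t}]$ of any edge path of length $\le 2C+1$ is completely split for all $t\ge t'$. Then for $t\ge t'$ each subdivided piece maps to a completely split path, and by the bounded cancellation lemma (Lemma \ref{BCL}) together with the expansion Convention-Remark \ref{monotoneexpand}, the surviving bad edges at time $t$ come only from the $\le C_f$ edges on each side of the (at most one per subdivision point, plus the original endpoints of $\beta$) concatenation turns that remain illegal. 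Counting subdivision points: a bad segment of length $\ell$ produces $\le \lceil \ell/(2C+1)\rceil$ pieces, hence contributes at most $2C_f(\lceil \ell/(2C+1)\rceil + 1)$ bad edges at time $t'$. Summing over all bad segments in $\gamma_w$ and using $2C+1\ge 2C_f+1> 4C_f$ (after adjusting $C$ if needed), this is bounded by a fixed fraction — say $\le \tfrac14 B_0$ — plus an error term of the form $2C_f\cdot\#\{\text{bad segments in }\gamma_w\}$.

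The remaining issue is that error term: if $\gamma_w$ has many very short bad segments, $\#\{\text{bad segments}\}$ need not be controlled by $B_0$. Here is where the dichotomy enters. If $\gamma_w$ has few long good segments relative to its good length, one is in a situation like Case 2 of Lemma \ref{improvinggoodness}: most short good segments are squeezed between short bad segments, so after applying $[f^{t}]$ for a uniformly bounded time (enough to completely split every path of bounded length) those short good segments grow and their neighbours — whether good or bad — get absorbed into one completely split block, killing the bad edges between them; the count $\#\{\text{bad segments}\}$ then collapses. If instead there are many long good segments, each grows by at least the factor $2(2Z+1)$ per step while $B_k$ is bounded above by $(1-\g(\gamma_w))|\gamma_w| C$ uniformly (Lemma \ref{nomorebad}), so $B_k/L_k\to 0$ at a rate depending only on the lower bound for the proportion of long good length — but if that proportion is bounded below by a constant depending only on $\delta$, the goodness crosses $\delta$ in uniformly bounded time, landing in the first alternative. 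Making "many"/"few" precise with a threshold chosen as a function of $\delta$ (and $Z$, $C$, the already-fixed $t'$) yields the uniform exponent $T$.

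The main obstacle I expect is exactly this bookkeeping of short bad segments: showing that the number of bad segments, as opposed to their total length, is controlled after a uniformly bounded number of iterations. I would handle it by the subdivision-and-absorption argument above, essentially transcribing the Case 1 / Case 2 split of Lemma \ref{improvinggoodness} but tracking the integer $B_k$ rather than the ratio $\g$, and using that once a completely split path of length $\ge 2Z+1$ sits between two bad segments, after one more application of $f$ it has grown past the cancellation constant and fuses everything in its $C$-neighbourhood into a single good block. Once the count is controlled, the factor-$\tfrac12$ contraction of $B_t$ in the second alternative is immediate from the bounded cancellation estimate, and the whole argument is uniform in $w$ because every exponent invoked ($t'$, the absorption time, the threshold-comparison time) depends only on $\delta$ and the fixed data of the $\CT$ $f$.
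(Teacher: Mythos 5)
There is a genuine gap, and it sits exactly where you anticipated trouble: the control of the \emph{number} of bad segments. Your ``absorption'' mechanism for the branch with many short good segments does not work. A good segment is merely one admitting a complete splitting; it may consist entirely of fixed edges and $\INP$'s, in which case it is Nielsen and never grows, no matter how many iterates you apply, so it never ``grows past the cancellation constant.'' Even for a good segment that does grow, growth of that block does not legalize the adjacent illegal turns or confer a complete splitting on the neighbouring bad subpath: bad edges disappear only when the bad subpath itself eventually maps to a completely split path (Lemma \ref{CTmaps}(\ref{splits2})), and the power needed for that depends on the length of that subpath, which is uniform only for bad subpaths of bounded length. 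So the claim that ``the count collapses'' after a uniformly bounded time is unjustified, and your error term $2C_f\cdot\#\{\text{bad segments}\}$ remains uncontrolled relative to $B_0$ precisely in the case that matters. Your other branch also does not close: bounding $B_k/L_k$ in uniformly bounded time from ``many long good segments'' needs the long good length to be a definite proportion of the \emph{total} length, i.e.\ an a priori lower bound on goodness, which you have not produced.

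The paper resolves this by dichotomizing on \emph{bad} segments, not good ones: call a maximal bad segment long if its length exceeds $10C$ and short otherwise. If the short bad segments carry a definite fraction (say $1/10$) of the bad length, then, since every maximal bad segment is followed by a good segment and each short bad segment has length at most $10C$, the good length is at least $\frac{1}{100C}$ of the bad length, so $\g(\gamma_w)\ge\frac{1}{100C+1}$ universally; Lemma \ref{improvinggoodness} then forces $\g(\varphi^t(w))\ge\delta$ after a uniform exponent, i.e.\ the \emph{first} alternative --- no attempt is made to contract the bad count in this case. If instead the long bad segments carry at least $9/10$ of the bad length, then subdividing each long bad segment into pieces of length $<10C$, taking a uniform power after which such pieces are completely split, and applying bounded cancellation (Lemma \ref{BCL}) gives that each long bad segment loses all but at most a fifth of its bad length, because the junction losses (at most $2C$ per piece) are a small fraction of a segment of length $>10C$; this yields the factor $\frac{9}{50}<\frac12$ contraction of total bad length, the second alternative. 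Your subdivision-plus-bounded-cancellation computation is essentially the paper's Case 2, but to make your argument correct you must replace the absorption step by this observation that many short bad segments force a universal lower bound on goodness, and route that case through Lemma \ref{improvinggoodness} into the first alternative. (The side remark ``$2C_f+1>4C_f$'' is also false for $C_f\ge 1$, but that is a fixable constant, not the core issue.)
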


\begin{proof} Let $\gamma_w$ be the unique circuit in $G$ that represents $w\in\FN$. Consider the splitting of $\gamma$ into maximal good segments $a_i$ and maximal bad segments $b_i$. Recall that $C=\max\{C_f, 2Z+1\}$. Let us call a bad segment $b_i$ \emph{long bad segment} if $|b_i|>10C$, and \emph{short bad segment} otherwise. 

There are two cases to consider: 

\medskip
\noindent{\bf Case 1.} First assume that,
\[
\frac{\text{total length of short bad segments in $\gamma_w$}}{\text{total length of bad segments in $\gamma_w$}}\ge\frac{1}{10}. 
\]
Since every maximal bad segment is followed by at least one good segment, we have
\[
(\text{total length of good segments in $\gamma_w$})\ge\frac{1}{10C} (\text{total length of short bad segments in $\gamma_w$})
\]
and hence 
\[
(\text{total length of good segments in $\gamma_w$})\ge\frac{1}{100C} (\text{total length of bad segments in $\gamma_w$}).
\]
Therefore, 
\[
\g(\gamma_w)\ge\frac{1}{100C+1}.
\]
Now, invoking Lemma \ref{improvinggoodness}, there is an exponent $T_1$ such that 
\[
\g(\varphi^{t}(w))\ge\delta
\]
for all $t\ge T_1$, which is clearly independent of the conjugacy class $w$.  

\medskip
\noindent{\bf Case 2.} 
Now assume otherwise that,
\[
\frac{\text{total length of long bad segments in $\gamma_w$}}{\text{total length of bad segments in $\gamma_w$}}\ge\frac{9}{10}. \tag{3.17.2}\label{muchlongbad}
\]

Let $T_2$ be an exponent such that for all edge paths $\gamma$ with $|\gamma|<10C$, $[f^{t}(\gamma)]$ is completely split for all $t\ge T_2$. Then for any long bad segment $b$, bounded cancellation lemma implies that
\[
\text{total length of bad segments in $[f^{t}(b)]$}\le\frac{1}{5}\text{total length of bad segments in $b$}. 
\]
Together with the assumption \ref{muchlongbad}, we get 

\[
(\text{total length of bad segments in $f^{t}(\gamma_w)$})\le \frac{9}{50}(\text{total length of bad segments in $\gamma_w$}). 
\]
for all $t\ge T_2$. Now set $T=\max\{T_1,T_2\}$, and the lemma follows. 

\end{proof}

\begin{lemma}\label{futurepast} Let $h:G'\to G'$ be a $\CT$ that represents $\varphi^{-1}\in\Out(\FN)$. Define, $\g'(\gamma')$ for $\gamma'\in G'$, and $\g'(w)$ for $w\in\FN$ analogously. Then, given $0<\delta<1$, there is an exponent $T>0$ such that, up to replacing $f$ and $h$ with powers, for any element $w\in\FN$, either
\[
\g(\varphi^{t}(w))\ge\delta
\]
or 
\[
\g'(\varphi^{-t}(w))\ge\delta
\]
for all $t\ge T$. 
\end{lemma}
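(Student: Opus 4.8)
The idea is to combine the good/bad dichotomy of Lemma \ref{goodbaddichotomy} applied to both $\varphi$ (via $f$) and $\varphi^{-1}$ (via $h$), using the fact that bad total length cannot decrease geometrically forever. First I would fix $0<\delta<1$ and apply Lemma \ref{goodbaddichotomy} to $f$ to obtain an exponent $T_f$ so that for every $w$ and every $t \ge T_f$, either $\g(\varphi^t(w)) \ge \delta$ or the total bad length of $f^t(\gamma_w)$ is at most half that of $\gamma_w$; symmetrically, Lemma \ref{goodbaddichotomy} applied to $h$ gives $T_h$ with the analogous alternative for $\varphi^{-1}$ and $\g'$. Set $T_0 = \max\{T_f, T_h\}$ and work with the fixed power $\varphi^{T_0}$; replacing $f$, $h$, and $\varphi$ by these powers (as the statement permits), we may assume $T_0 = 1$, i.e. at every single step of $f$ or $h$ we get the relevant dichotomy.

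Now fix $w$ and suppose, for contradiction, that there is no uniform $T$ as claimed; equivalently, for every bound there is a step $t$ with both $\g(\varphi^t(w)) < \delta$ and $\g'(\varphi^{-t}(w)) < \delta$. The key point is to run the two iterations in tandem but in opposite directions. Consider the circuit $\gamma_w$ in $G$ with bad total length $B = B(\gamma_w)$, and the circuit $\gamma'_w$ in $G'$ with bad total length $B' = B'(\gamma'_w)$ (both finite). As long as $\g(\varphi^t(w)) < \delta$, Lemma \ref{goodbaddichotomy} forces the $G$-bad total length of $f^t(\gamma_w)$ to halve at each such step, so after at most $\log_2 B + 1$ such steps the $G$-bad length would drop below $1$ and hence be $0$, which forces $\g(\varphi^t(w)) = 1 \ge \delta$ — a contradiction. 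Thus there are at most $\log_2 B$ values of $t$ for which $\g(\varphi^t(w)) < \delta$; in particular for all $t > \log_2 B$ we have $\g(\varphi^t(w)) \ge \delta$. The same argument on the $h$-side gives: for all $t > \log_2 B'$ we have $\g'(\varphi^{-t}(w)) \ge \delta$. The problem is that $B$ and $B'$ depend on $w$, so this does not yet give a uniform $T$.

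To remove the dependence on $w$, I would first reduce to $w$ of bounded complexity using that $\varphi$ and $\varphi^{-1}$ expand lengths of circuits. Concretely: fix any base power, and observe that for a circuit $\gamma_w$ of length $|\gamma_w| \le L_0$ (for a constant $L_0$ to be chosen), the bad total lengths $B(\gamma_w)$ and $B'(\gamma'_w)$ are bounded by $L_0$ and by $C_{h,f} L_0$ respectively via bounded cancellation, giving a uniform $T$ for all such $w$ from the previous paragraph. For general $w$, I would apply the dichotomy repeatedly: by Lemma \ref{goodbaddichotomy} again, either after a bounded number of steps $\g(\varphi^t(w)) \ge \delta$ already (and we are done, choosing $\delta$ appropriately and using monotonicity from Lemma \ref{gettingbetter}), or the bad length keeps halving until $f^t(\gamma_w)$ is completely split, i.e. $\g = 1$. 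In either case, after at most $\log_2|\gamma_w|$ steps the goodness of $\varphi^t(w)$ exceeds $\delta$ — but this still sees $|\gamma_w|$. The clean fix is to choose $\delta$ small and invoke Lemma \ref{improvinggoodness}: once $\g(\varphi^t(w)) > \delta$ for a single $t$, it stays $> \delta$ for all larger $t$ (after a further fixed power, using monotonicity), so it suffices to show that \emph{for every} $w$, \emph{either} $\g(\varphi^t(w)) \ge \delta$ for some $t \le T$ \emph{or} $\g'(\varphi^{-t}(w)) \ge \delta$ for some $t \le T$, with $T$ uniform. This last assertion follows by the pigeonhole/contradiction argument above once one notes that if neither happens within $T$ steps then both the $G$-bad length and the $G'$-bad length are forced to have halved $T$ times, which is impossible for $T$ larger than $\log_2$ of a uniform bound obtained by first running a fixed number of steps to make $\gamma_w$ completely split on a bounded initial portion (Lemma \ref{nomorebad}, which bounds bad length after iteration by $|\gamma|2C$ — but we need the reverse).

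\medskip
\noindent\emph{Main obstacle.} The genuine difficulty is exactly the uniformity in $w$: the naive halving argument produces a $w$-dependent bound $\log_2 B(\gamma_w)$, and there is no a priori uniform bound on $B(\gamma_w)$ since $|\gamma_w|$ is unbounded. Overcoming this requires exploiting that $\varphi$ and $\varphi^{-1}$ both \emph{expand}: after a fixed number $T_1$ of iterations of $f$, Lemma \ref{nomorebad} shows the bad total length of $f^{T_1}(\gamma_w)$ is at most $2C$ times the \emph{number of bad blocks} we started with, but more usefully, the total length has grown by a definite factor, so the \emph{proportion} $1 - \g$ of bad length is what must be controlled, not the absolute bad length — and proportions live in $[0,1]$, which is compact. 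Recasting Lemma \ref{goodbaddichotomy} in terms of the proportion $1-\g$ rather than absolute bad length (the proof of \ref{goodbaddichotomy} in fact gives a proportional statement in Case 2, since the good length also grows), the halving is of a quantity bounded by $1$, so it terminates in $\log_2(1/\text{(minimal positive bad proportion)})$ steps — but bad proportions are not bounded away from $0$. The correct resolution, which the author surely intends, is to combine: if $1-\g$ is small then $\g \ge \delta$ and we invoke Lemma \ref{improvinggoodness}; if $1-\g$ is not small then it is bounded below by a uniform constant and the halving terminates uniformly. Thus one sets the threshold $\delta$ so that ``$\g < \delta$'' means ``$1-\g > 1-\delta$'' with $1-\delta$ close to $1$, runs the dichotomy on both sides simultaneously, and concludes that within $T := \lceil \log_2(1/(1-\delta)) \rceil + 1$ steps one of the two goodness values must reach $\delta$, since otherwise both bad proportions would have to drop below $1-\delta$ — contradicting ``$\g < \delta$'' — within $T$ steps. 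Setting $T$ to this uniform value and passing to the corresponding powers of $f$ and $h$ finishes the proof. $\qed$
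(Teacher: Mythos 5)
There is a genuine gap, and it is exactly at the point you flag as the ``main obstacle'': your argument never couples the two train track maps. Nothing in your proof uses any relation between the realizations of the same element in $G$ and in $G'$, and without such a coupling the statement cannot follow, because the one-sided statements are false uniformly. Your concrete fix is incorrect: Lemma \ref{goodbaddichotomy} halves the \emph{absolute} total bad length, not the bad \emph{proportion} $1-\g$. Under forward iteration the total length of a circuit can shrink drastically (this is precisely what happens for backward iterates), so the proportion need not halve, and in Case 2 of Lemma \ref{goodbaddichotomy} there may be essentially no good segments at all, so ``the good length also grows'' gives nothing. Worse, any argument of the shape you propose (``within $T=\lceil\log_2(1/(1-\delta))\rceil+1$ steps the goodness must reach $\delta$'') only ever uses one side at a time, hence would prove that $\g(\varphi^t(w))\ge\delta$ for all $t\ge T$ uniformly in $w$, with the backward alternative never needed. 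That statement is false: for $w=\varphi^{-n}(v)$ with $n$ large, $\g(\varphi^t(w))$ stays small for roughly $n$ iterations, so no uniform forward bound exists. Thus the uniformity problem you correctly isolate is not resolved by the proposal.

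The proof the paper appeals to (\cite[Proposition 4.20]{LU2}, with ``number of illegal turns'' replaced by ``total length of bad segments'') obtains uniformity by coupling the two directions along the \emph{same} orbit segment. Suppose $\g(\varphi^t(w))<\delta$ and $\g'(\varphi^{-t}(w))<\delta$ for some $w$ and some large $t$. By Lemma \ref{improvinggoodness} applied to $f$ and to $h$, both goodness functions must then stay below $\delta$ along essentially the whole segment $\varphi^{-t}(w),\dots,w,\dots,\varphi^{t}(w)$ (otherwise the goodness at the relevant endpoint would exceed $\delta$). Lemma \ref{goodbaddichotomy} then forces the $G$-bad length to halve repeatedly as one moves \emph{forward} along this segment, and the $G'$-bad length to halve repeatedly as one moves \emph{backward} along the same segment. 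Since $\g<\delta$ gives a lower bound (bad length $\ge(1-\delta)\cdot$length), and since $|u|_G$ and $|u|_{G'}$ are comparable for every $u$ via a fixed difference-of-markings map and bounded cancellation, the forward chain bounds $|\varphi^{t}(w)|_G$ by a constant times $2^{-ct}|\varphi^{-t}(w)|_G$, the backward chain bounds $|\varphi^{-t}(w)|_{G'}$ by a constant times $2^{-ct}|\varphi^{t}(w)|_{G'}$, and multiplying the two inequalities cancels the $w$-dependent lengths, yielding $1\le \mathrm{const}\cdot 4^{-ct}$, which is absurd once $t$ exceeds a bound independent of $w$. This opposite-direction chaining over one orbit, together with the $G$--$G'$ length comparison, is the missing ingredient; your outline contains the dichotomy and the monotonicity inputs but not this coupling, so as written it does not prove the lemma.
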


\begin{proof}  Let $h:G'\to G'$ be a $\CT$ that represents $\varphi^{-1}\in\Out(\FN)$ and $\g'$ be the corresponding goodness function, and we pass to appropriate powers according to Convention-Remark \ref{monotoneexpand}. The proof is now nearly identical to that of \cite[Proposition 4.20]{LU2} where number of illegal turns is replaced by total length of bad segments. 
\end{proof}

\begin{proposition}\cite[Proposition 3.3]{LU2}
\label{convergence-criterion} Let $f:X\to X$ be a homeomorphism of a compact 
metrizable 
space $X$. 
Let 
$Y \subset X$ be dense subset of $X$, and let $\Delta_{+}$ and $\Delta_{-}$ be two $f$-invariant sets in $X$ that are disjoint. Assume that the following criterion holds:

For every neighborhood $U$ of $\Delta_+$ and every neighborhood $V$ of $\Delta_-$ there exists an integer $m_0 \geq 1$ such that for any $m \geq m_0$ and any $y \in Y$ one has either $f^m(y) \in U$ or $f^{-m}(y) \in V$. 

Then $f^{2}$ has generalized uniform North-South dynamics from $\Delta_{-}$ to $\Delta_+$.
\end{proposition}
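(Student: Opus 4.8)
The plan is to unwind the definition of generalized uniform north--south dynamics and verify its two halves directly. Writing $g=f^{2}$, I must show: for every open neighbourhood $U_{*}$ of $\Delta_{+}$ and every compact $K\subseteq X\setminus\Delta_{-}$ there is an $M$ with $g^{m}(K)\subseteq U_{*}$ for all $m\ge M$, and the mirror statement with $\Delta_{\pm}$ interchanged and $g$ replaced by $g^{-1}$. (Invariance of $\Delta_{\pm}$ under $g$ is immediate from $f\Delta_{\pm}=\Delta_{\pm}$, and disjointness is given; I will also use, as holds in the application, that $\Delta_{\pm}$ are closed.) The argument consists of two moves: (i) promote the hypothesis from the dense set $Y$ to all of $X$, at the price of replacing neighbourhoods by their closures; (ii) observe that the resulting dichotomy, read as the set equality $X=f^{-m}(\overline{U})\cup f^{m}(\overline{V})$, rearranges --- using only that each $f^{m}$ is a bijection of $X$ --- into the inclusion $f^{2m}(X\setminus\overline{V})\subseteq\overline{U}$, which is exactly what the conclusion demands.

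For move (i): fix open sets $U\supseteq\Delta_{+}$ and $V\supseteq\Delta_{-}$ and let $m_{0}$ be the constant the hypothesis provides for this pair. Given an arbitrary $x\in X$ and $m\ge m_{0}$, choose $y_{i}\to x$ with $y_{i}\in Y$; each $y_{i}$ satisfies $f^{m}(y_{i})\in U$ or $f^{-m}(y_{i})\in V$, so by pigeonhole some subsequence satisfies the same alternative throughout. Passing to that subsequence and using continuity of $f^{m}$ (resp. $f^{-m}$) gives $f^{m}(x)\in\overline{U}$ or $f^{-m}(x)\in\overline{V}$. Hence for every $m\ge m_{0}$ we have $X=f^{-m}(\overline{U})\cup f^{m}(\overline{V})$.

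For move (ii) and the forward half: given $U_{*}$ and $K$ as above, use metrizability (hence normality) of $X$ and closedness of $\Delta_{+}$ to pick an open $U$ with $\Delta_{+}\subseteq U\subseteq\overline{U}\subseteq U_{*}$, and use compactness of $K$ together with closedness of $\Delta_{-}$ to pick an open $V\supseteq\Delta_{-}$ with $\overline{V}\cap K=\emptyset$, so $K\subseteq X\setminus\overline{V}$. Let $m_{0}$ be the constant from move (i) for $(U,V)$. For $m\ge m_{0}$ the equality $X=f^{-m}(\overline{U})\cup f^{m}(\overline{V})$ gives $X\setminus f^{m}(\overline{V})\subseteq f^{-m}(\overline{U})$; applying the bijection $f^{m}$ and using $f^{m}\big(X\setminus f^{m}(\overline{V})\big)=X\setminus f^{2m}(\overline{V})$ yields $X\setminus f^{2m}(\overline{V})\subseteq\overline{U}$, i.e. $f^{2m}(X\setminus\overline{V})\subseteq\overline{U}\subseteq U_{*}$. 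Since $K\subseteq X\setminus\overline{V}$ we get $g^{m}(K)=f^{2m}(K)\subseteq U_{*}$ for all $m\ge m_{0}$, so $M=m_{0}$ works. The backward half is the mirror image: swap the roles of $U$ and $V$ (and of $f$ and $f^{-1}$), extract from the same equality the inclusion $f^{-2m}(X\setminus\overline{U})\subseteq\overline{V}$, and conclude $g^{-m}(K')\subseteq V_{*}$ for any compact $K'\subseteq X\setminus\Delta_{+}$ and all large $m$. This gives generalized uniform north--south dynamics of $g=f^{2}$ from $\Delta_{-}$ to $\Delta_{+}$.

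I expect the only genuine obstacle to be spotting the rearrangement in move (ii): once the $Y$-dichotomy is rephrased as the set equality $X=f^{-m}(\overline{U})\cup f^{m}(\overline{V})$, bijectivity of $f$ does all the work, and it is precisely the composition $f^{m}\circ f^{m}$ occurring there that forces the conclusion to concern $f^{2}$ rather than $f$ --- the hypothesis at ``time $m$'' only controls the dynamics at ``time $2m$''. Move (i) is routine; the point worth flagging is that it only produces the closed sets $\overline{U},\overline{V}$, which is why one assumes $\Delta_{\pm}$ closed (harmless here, as in the application they are compact simplices in $\PCurr(\FN)$) so that $U_{*}$ can first be shrunk to an open $U$ with $\overline{U}\subseteq U_{*}$.
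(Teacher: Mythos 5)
Your argument is correct and is essentially the proof of this statement given in \cite{LU2} (which the present paper only cites): approximate points of $X$ by the dense set $Y$, use continuity of $f^{\pm m}$ to land in $\overline{U}$ or $\overline{V}$, and let bijectivity of $f^{m}$ convert the time-$m$ dichotomy $X=f^{-m}(\overline{U})\cup f^{m}(\overline{V})$ into control of $f^{2m}$ on compact sets disjoint from $\overline{V}$ (resp.\ $\overline{U}$), which is exactly why the conclusion concerns $f^{2}$. The one caveat --- which you correctly flag --- is that shrinking $U_{*}$ to $U$ with $\overline{U}\subseteq U_{*}$ and separating $K$ from $\Delta_{-}$ by $\overline{V}$ uses that $\Delta_{\pm}$ are closed; this is implicit in the intended statement and holds in the application, where $\Delta_{\pm}$ are compact simplices in $\PCurr(\FN)$.
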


\begin{proposition}\cite[Proposition 3.4]{LU2} 
\label{NS-for-roots}
Let $f: X \to X$ be a homeomorphism of a compact 
space $X$, and let $\Delta_{+}$ and $\Delta_{-}$
be disjoint $f$-invariant sets. Assume that some power $f^p$ with $p \geq 1$ has generalized uniform North-South dynamics from $\Delta_-$ to $\Delta_+$. 

Then the map $f$, too, has generalized uniform North-South dynamics from $\Delta_-$ to $\Delta_+$.
\end{proposition}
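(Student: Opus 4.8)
The plan is to promote the dynamics of $f^p$ to that of $f$ by the usual division-with-remainder argument. Fix an open neighborhood $U$ of $\Delta_+$ and a compact set $K\subset X\setminus\Delta_-$; it suffices to produce an integer $M'\ge 1$ with $f^n(K)\subset U$ for all $n\ge M'$, and then run the symmetric argument for the repelling direction.

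First I would note that for each residue $r\in\{0,1,\dots,p-1\}$ the set $K_r:=f^r(K)$ is again a compact subset of $X\setminus\Delta_-$. Compactness is immediate since $f$ is a homeomorphism, and $K_r\cap\Delta_-=\emptyset$ because $\Delta_-$ is $f$-invariant: if $x\in f^r(K)\cap\Delta_-$ then $f^{-r}(x)\in K\cap f^{-r}(\Delta_-)=K\cap\Delta_-=\emptyset$. Hence each $K_r$ is an admissible input to the generalized uniform North--South dynamics of $f^p$, so there are integers $M_r\ge 1$ with $(f^p)^q(K_r)\subset U$ for all $q\ge M_r$. Setting $M'':=\max_{0\le r\le p-1}M_r$ and $M':=p(M''+1)$, any $n\ge M'$ can be written $n=pq+r$ with $0\le r\le p-1$ and $q\ge M''$, whence $f^n(K)=(f^p)^q\big(f^r(K)\big)=(f^p)^q(K_r)\subset U$, as desired.

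The repelling direction is identical after replacing $f$ by $f^{-1}$: one has $(f^{-1})^p=(f^p)^{-1}$, the set $\Delta_+$ is $f^{-1}$-invariant, and the generalized uniform North--South dynamics of $f^p$ also controls $(f^p)^{-q}$ on compact subsets of $X\setminus\Delta_+$. Since $\Delta_+$ and $\Delta_-$ are disjoint and $f$-invariant by assumption, this shows that $f$ itself has generalized uniform North--South dynamics from $\Delta_-$ to $\Delta_+$. There is no real obstacle in this argument; the only points deserving a word of care are the disjointness $f^r(K)\cap\Delta_-=\emptyset$ --- which is precisely where $f$-invariance of $\Delta_-$ (rather than mere $f^p$-invariance) enters --- and the passage to a uniform threshold $M''$ by taking the maximum over the finitely many residues $r$.
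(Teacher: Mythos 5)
Your argument is correct and is essentially the standard proof of this statement (the paper only cites \cite[Proposition 3.4]{LU2}, whose proof is the same division-with-remainder scheme: control the finitely many translates $f^r(K)$, $0\le r\le p-1$, by the uniform dynamics of $f^p$, using $f$-invariance of $\Delta_\pm$ to keep them disjoint from $\Delta_\mp$, and argue symmetrically for $f^{-1}$). No gaps; the points you flag (invariance under $f$ itself and uniformity over residues) are exactly the ones that matter.
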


\begin{proof}[Proof of Theorem \ref{dynamicsofhyp}] The theorem now follows from combination of Lemma \ref{goforward}, Lemma \ref{futurepast}, Proposition \ref{convergence-criterion} and Proposition \ref{NS-for-roots}.  
\end{proof}

\section{Hyperbolic extensions of free groups}

In this section we use the dynamics of atoroidal outer automorphisms to prove Theorem \ref{maintwo} from the introduction which allows us to construct new examples of hyperbolic extensions of free groups. 

In what follows we will utilize theory of laminations on free groups which appear as supports of currents on $\FN$. We refer reader to \cite{BFH97, BFH00, CPJTA, CHL1, CHL2, CHL3, FH, HMIntro} for detailed discussions. A \emph{lamination} is a closed subset of $\partial^{2}\FN$ which is $\FN$-invariant, and flip-invariant. We say that a free factor $\F$ \emph{carries} a lamination $\Lambda$ if all lines in $\Lambda$ are contained in $\partial^{2}\F$.  

\begin{convention} Throughout this section we assume that we pass to the finite index characteristic subgroup $\IA_{N}(\ZZ_3)$ of $\Out(\FN)$ as in Handel--Mosher subgroup decomposition theory \cite{HMIntro} so that for each outer automorphism every periodic conjugacy class is fixed, and every periodic free factor system is invariant. 
\end{convention}

Let $\calH$ be a subgroup of $\Out(\FN)$ and $\calF^1\sqsubset\calF^2\sqsubset\ldots\sqsubset\calF^n=\FN$ be a \emph{maximal} $\calH$-invariant filtration of $\FN$ by free factor systems, meaning that if $\calH(\calA)=\calA$ for some $\calF^{i}\sqsubset\calA\sqsubset\calF^{i+1}$, then either $\calA=\calF^{i}$ or $\calA=\calF^{i+1}$. Let $\varphi\in\calH$ be an atoroidal outer automorphism. Consider a (possibly trivial) refinement $\calA_1\sqsubset\calA_2\sqsubset\ldots\sqsubset\calA_m=\FN$ of $\calF^1\sqsubset\calF^2\sqsubset\ldots\sqsubset\calF^n=\FN$ which is a maximal invariant filtration for $\varphi$. 

If $\calH$ fixes the conjugacy class of a free factor $\F$ of $\FN$, we will call the image of $\calH$ in $\Out(\F)$ under the natural homomorphism $\Stab(\F)\to\Out(\F)$ \emph{the restriction} of $\calH$ to $\F$ and denote it by $\calH_{\mid\F}$. 

We say that an $\calH$-invariant free factor $\F$ is \emph{minimal} if $\calH$ does not fix the conjugacy class of any proper free factor of $\F$. Similar definition holds for $\varphi$ by considering the cyclic subgroup $\langle \varphi\rangle$. Observe that for $\varphi\in\calH$ each minimal $\varphi$-invariant free factor $\F^{i}_{\varphi}$ is contained in a unique minimal $\calH$-invariant free factor $\F^{i}_{\calH}$.

\begin{definition}\label{independent} Let $\varphi$ and $\psi$ be two atoroidal outer automorphisms with attracting and repelling simplices $\Delta_{\pm}(\varphi)$  and $\Delta_{\pm}(\psi)$ given by Theorem \ref{dynamicsofhyp}.  We say that $\varphi$ and $\psi$ are independent if $\Delta_{\pm}(\varphi)\cap\Delta_{\pm}(\psi)=\emptyset$. 
\end{definition}

\begin{lemma}\label{goodconjugator} Let $\varphi\in\mathcal{H}$ be an atoroidal outer automorphism. 
Suppose that the restriction of  $\calH$ to $\F_i$
is not virtually cyclic for each minimal $\calH$-invariant free factor $\F_i$ of $\FN$. Then $\calH$ contains two independent atoroidal outer automorphisms.

\end{lemma}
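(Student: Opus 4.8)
The plan is to produce, from the given atoroidal $\varphi$ and the assumed non-virtual-cyclicity of each restriction $\calH_{\mid\F_i}$, a second atoroidal element $\psi$ whose attracting/repelling simplices are disjoint from those of $\varphi$. The basic mechanism is conjugation: if $g\in\calH$ is chosen ``in general position'' then $\psi:=g\varphi g^{-1}$ is again atoroidal (atoroidality is a conjugacy invariant in $\Out(\FN)$), and its simplices are $\Delta_{\pm}(\psi)=g\cdot\Delta_{\pm}(\varphi)$. So the task reduces to finding $g\in\calH$ with $g\cdot\Delta_{\pm}(\varphi)\cap\Delta_{\pm}(\varphi)=\emptyset$ in $\PCurr(\FN)$.

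First I would analyze the structure of $\Delta_{\pm}(\varphi)$. By the construction in Section~\ref{dynamicsofatoroidal}, the simplex of attraction is the projective class of non-negative linear combinations of the currents $\mu_\sigma$ coming from expanding splitting units of a $\CT$ representing a power of $\varphi$; by the Remark following Proposition~\ref{freqconverge} each such current is a linear combination of the currents $\mu_e$ attached to edges of $\EG$ strata. The supports of these currents are the (dual) attracting laminations of $\varphi$, which are carried by the minimal $\varphi$-invariant free factors $\F^i_\varphi$; each such $\F^i_\varphi$ sits inside a unique minimal $\calH$-invariant free factor $\F^i_\calH$. The key point is that $\Delta_{\pm}(\varphi)$ is, in a precise sense, a ``bounded'' object living over this finite collection of proper free factors, so to move it off itself it suffices to move the relevant free factors (or to exploit that $\calH$ restricted to each of them is large). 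Then I would use the hypothesis: since $\calH_{\mid\F_i}$ is not virtually cyclic for each minimal $\calH$-invariant $\F_i$, by Handel--Mosher / Horbez there is an element $g\in\calH$ acting loxodromically on the relevant complex for $\F_i$ (the free factor complex of $\F_i$, or the cyclic splitting / relative complex), so that $g$ does not preserve the conjugacy class of $\F^i_\varphi$ inside $\F_i$, and high powers $g^n$ push any fixed compact set of currents off any fixed neighborhood of $\Delta_{\pm}(\varphi)$. Combining the behavior over all the (finitely many) minimal free factors, I can choose a single $g\in\calH$ (a suitable product/power of the per-factor loxodromics, arranged not to share fixed sets) with $g^n\cdot\Delta_{\pm}(\varphi)\cap\Delta_{\pm}(\varphi)=\emptyset$ for $n$ large, and set $\psi:=g^n\varphi g^{-n}$.

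The main obstacle I expect is the ``combining over all minimal free factors'' step together with showing the moved simplex really becomes disjoint, not merely that some edge-current moves: one must rule out the degenerate possibility that $g$ fixes every $\F^i_\varphi$-conjugacy-class while still being non-virtually-cyclic on $\F_i$ (handled by the loxodromic-element existence theorem of Handel--Mosher, which guarantees an element not fixing any proper free factor of $\F_i$, hence not fixing $[\F^i_\varphi]$), and one must promote ``$g$ moves the free factor'' to ``$g^n$ moves the whole current simplex off itself,'' which uses north-south dynamics of loxodromic elements on the appropriate hyperbolic complex (Bestvina--Feighn) pulled back to currents, plus compactness of $\Delta_{\pm}(\varphi)$ in $\PCurr(\FN)$. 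A secondary technical nuisance is bookkeeping with several minimal free factors $\F^1_\calH,\dots,\F^k_\calH$ that may be nested or disjoint: one works factor-by-factor and uses that the simplices decompose along this collection, so disjointness over each piece gives global disjointness. Once $\psi$ is constructed, independence in the sense of Definition~\ref{independent} is immediate, and the lemma follows.
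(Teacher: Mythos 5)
Your high-level plan (produce the second element by conjugating $\varphi$ by a suitable element of $\calH$ obtained from Handel--Mosher applied to the restrictions to the minimal $\calH$-invariant factors, so that the simplices move off themselves) is the same as the paper's, but the two steps that carry the actual content have genuine gaps. First, the dynamical input is insufficient as stated: an element whose restriction to $\F_i$ is loxodromic on the free factor complex of $\F_i$ (i.e.\ fully irreducible) need not ``push any compact set of currents off any neighborhood of $\Delta_{\pm}(\varphi)$'' --- a geometric fully irreducible automorphism fixes the current of the boundary conjugacy class and does not act with north-south dynamics on $\PCurr(\F_i)$ at all. The paper must (and does) upgrade to restrictions that are fully irreducible \emph{and} atoroidal (possible because $\calH$ contains an atoroidal element), and must find a single $\theta\in\calH$ doing this simultaneously for all minimal factors, which is a theorem about simultaneous loxodromics in the co-surface graph, not bookkeeping. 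Even then, disjointness via north-south dynamics requires knowing that the fixed currents $[\mu^{i}_{\pm}(\theta)]$ avoid $\Delta^{i}_{\pm}(\varphi)$; this is exactly where the hypothesis that $\calH_{\mid\F_i}$ is not virtually cyclic enters in the paper (through the fact that the stabilizer of the fixed pair is virtually cyclic). In your sketch the non-virtual-cyclicity is used only to produce a loxodromic, and you never rule out that the attracting/repelling currents of your $g$ lie in, or that its repelling set meets, $\Delta_{\pm}(\varphi)$.

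Second, and more seriously, the reduction ``move the relevant free factors; the simplices decompose along the minimal factors, so disjointness over each piece gives global disjointness'' is not correct. The extremal currents of $\Delta_{\pm}(\varphi)$ come from EG strata of arbitrary height; their supports typically fill free factors much larger than the minimal $\calH$-invariant ones --- often all of $\FN$ --- and those carriers are $\calH$-invariant (or equal to $\FN$), so no element of $\calH$ can move them; likewise $\F^{i}_{\varphi}$ may coincide with $\F_i$, in which case even a fully irreducible restriction fixes it. So ``$g$ moves the supporting free factor'' cannot be the mechanism. The paper proves the claim instead by a support-containment descent: every extremal current $\mu^{k}_{+}$ of $\Delta_{+}(\varphi)$ has support containing $\supp(\mu^{i}_{+})$ for a bottom current carried by a minimal $\varphi$-invariant factor inside a minimal $\calH$-invariant factor $\F^{i}$; since $\theta^{M}$ preserves $\F^{i}$, an equality $\theta^{M}[\mu^{k}_{+}]=[\mu]$ with $[\mu]\in\Delta_{\pm}(\varphi)$ would force $\supp(\theta^{M}\mu^{i}_{+})\subset\supp(\mu)$ inside the double boundary of $\F^{i}$, contradicting the disjointness $\theta^{M}(\Delta^{i}_{\pm}(\varphi))\cap\Delta^{i}_{\pm}(\varphi)=\emptyset$ already arranged in $\PCurr(\F^{i})$. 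Without some version of this descent (or another argument handling extremal currents whose supports fill $\calH$-invariant factors), your claimed disjointness of the full simplices in $\PCurr(\FN)$ is unsupported, so the proposal as written does not close.
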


\begin{proof}

Let $\{\F^i\}_{i=1}^{s}$ be the set of all minimal $\calH$-invariant free factors. For each $i=1, \ldots s$ the restriction of $\calH$ to $\F^i$ is irreducible, hence by \cite[Theorem A]{HMIntro} and \cite[Theorem 0.1]{Hshort} $\calH$ contains an element $\theta_i$ whose restriction to $\F^i$ is fully irreducible, and since $\calH$ is not geometric (since it contains an atoroidal element), we can choose $\theta_i$ in a way that its restriction to $\F^i$ is both fully irreducible and atoroidal \cite[Theorem 5.4]{UyaNSD}. 

Since fully irreducible and atoroidal elements are precisely the loxodromic isometries of the \emph{co-surface graph} \cite{DTcosurface}, invoking \cite[Theorem 5.1]{CU} we can find a single $\theta\in\calH$ such that for each $i=1,\ldots s$, the restriction $\theta_{\mid\F^i}$ is fully irreducible and atoroidal. 
Recall that each fully irreducible and atoroidal outer automorphism acts on the space of projectivized geodesic currents with uniform north-south dynamics \cite{Martin, Uyaiwip}. Let $[\mu_{-}^{i}(\theta)]\in\PCurr( \F^{i}) $ and $[\mu_{+}^{i}(\theta)]\in\PCurr(\F^{i})$ denote the unstable and stable currents for the restriction $\theta_{\mid\F^i}$. Since the stabilizer of the set $\{[\mu_{-}^{i}(\theta)],[\mu_{+}^{i}(\theta)]\}$ is virtually cyclic in $\Out(\F^i)$, \cite{KL5}, using the assumption on $\calH$ we can furthermore assume  that for the above $\theta\in\calH$ it holds that $\theta_{\mid \F^i}$ and $\varphi_{\mid \F^i}$ are independent. 

Hence, we can find $M>0$ large enough so that $\theta^{M} (\Delta^{i}_{\pm}(\varphi))\cap\Delta^{i}_{\pm}(\varphi)=\emptyset$, where $\Delta^{i}_{\pm}(\varphi)$ are the attracting and repelling simplices of $\varphi_{\mid \F^{i}}$ in $\PCurr( \F^{i})$. 

More precisely, choose $U_i,V_i$ open neighborhoods of $[\mu_{+}^i(\theta)],[\mu_{-}^i(\theta)]$ in $\PCurr(\F^{i})$ which are disjoint from $\Delta^{i}_{\pm}(\varphi)$. Pick $M>0$ such that $\theta^{m}(\PCurr(\F^{i})\setminus V_i)\subset U_i$ for all $m\ge M$, in particular $\theta^{M}(\Delta^{i}_{\pm})\subset U_i$. See Figure \ref{NSDinminimal}. 
In fact, we choose $M$ that works for all minimal $\calH$-invariant free factors for suitable open neighborhoods of attracting simplices as there are only finitely many minimal $\calH$-invariant free factors.

\begin{figure}[h!]
\labellist
\tiny\hair 0.5pt
 \pinlabel {$V_i$} [ ] at 420 610
  \pinlabel {$\mu_{-}^{i}(\theta)$} [ ] at 469 595
 \pinlabel {$\bullet$} [ ] at 470 610
 \pinlabel {$U_i$} [ ] at 220 610
  \pinlabel {$\bullet$} [ ] at 180 590
  \pinlabel {$\mu_{+}^{i}(\theta)$} [ ] at 170 580
\pinlabel {$\Delta_{+}^i(\varphi)$} [ ] at 360 680
 \pinlabel {$\Delta_{-}^i(\varphi)$} [ ] at 360 540
 \pinlabel {$\PCurr(\F^{i})$} [ ] at 150 710
\endlabellist
\centering
\includegraphics[scale=0.65]{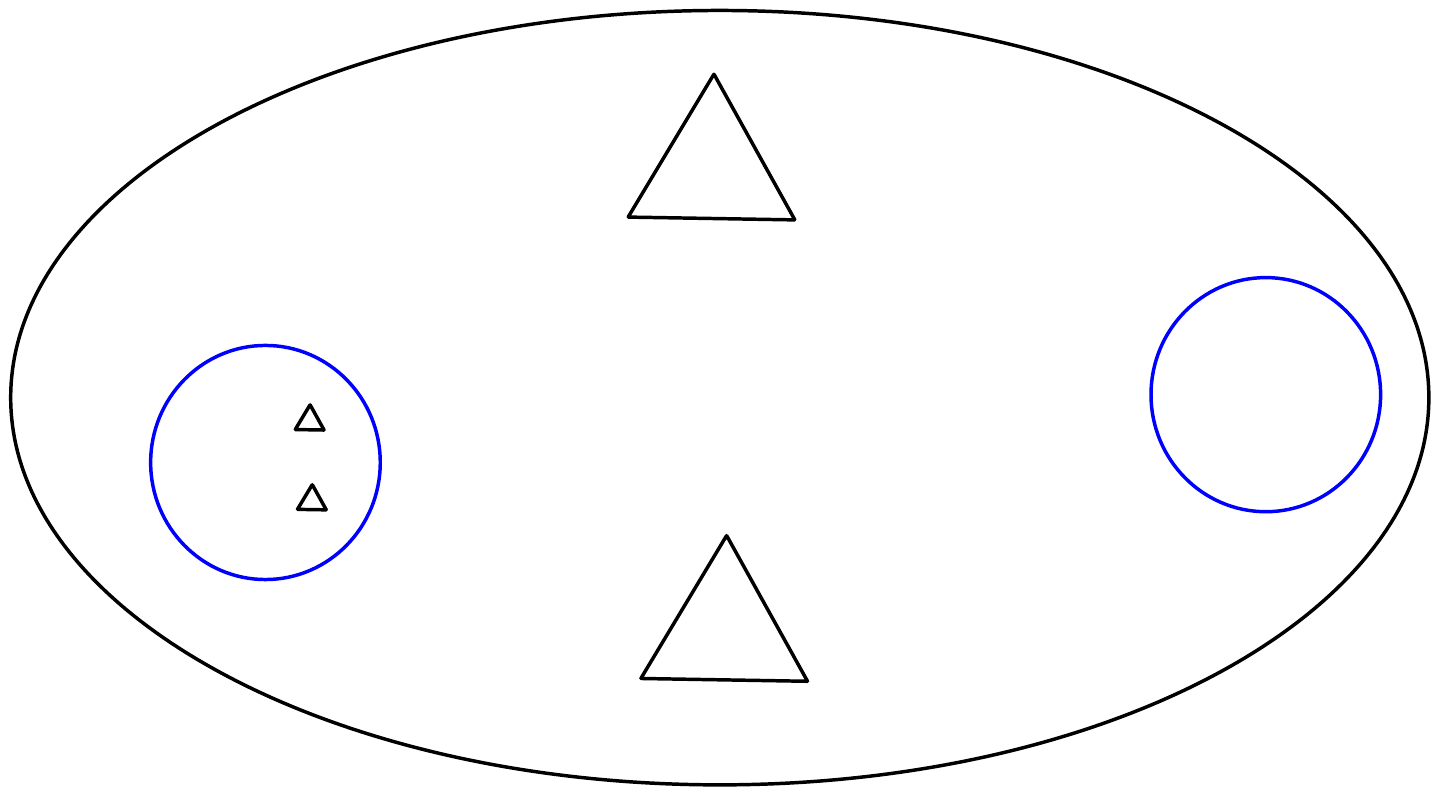}
\caption{Dynamics on $\PCurr(\F^{i})$}
\label{NSDinminimal}
\end{figure}

Now consider the automorphism $\eta=\theta^{M}\varphi \theta^{-M}$ which is atoroidal since being atoroidal is invariant under conjugacy. Furthermore $\Delta^{i}_{+}(\eta)=\theta^{M}(\Delta^{i}_{+}(\varphi))$ and $\Delta_{-}^{i}(\eta)=\theta^{M}(\Delta^{i}_{-}(\varphi))$ in $\PCurr(\F^{i})$,  and $\Delta_{+}(\eta)=\theta^{M}(\Delta_{+}(\varphi))$ and $\Delta_{-}(\eta)=\theta^{M}(\Delta_{-}(\varphi))$ in $\PCurr(\FN)$.

\begin{claim*} $\eta$ and $\varphi$ are independent. 
\end{claim*}

%

Let $[\mu_{+}^{k}]$ be an extremal point in the attracting simplex $\Delta_{+}(\varphi)$. We first want to show that $\theta^{M}([\mu_{+}^{k}])\neq [\mu]$ for any point $[\mu]\in\Delta_{\pm}(\varphi)$.

Notice that by Proposition \ref{freqconverge} the point $[\mu_{+}^{k}]$ corresponds to some $\EG$ stratum $H_k$ in the $\CT$ map $f:G\to G$ that represents $\varphi$. 

Let $\F^k_{\varphi}$ be the unique (minimal) free factor carrying $\supp(\mu_{+}^k)$ (this support is the \emph{attracting lamination} corresponding to the $\EG$ stratum $H_k$ in the sense of Bestvina--Feighn--Handel \cite{BFH00}), and consider a minimal $\varphi$-invariant free factor $\F^i_{\varphi}\sqsubset\F^k_{\varphi}$. The free factor $F^{i}_{\varphi}$ is contained in some minimal $\calH$-invariant free factor $F^{i}$ as above. 

Let $[\mu_{+}^{i}]\in\Delta_{+}$  be the unique geodesic current whose support is carried by $\F^i_{\varphi}$. We first observe that by definition $\supp(\mu_{+}^{i})\subset\supp(\mu_{+}^k)$. Second, the subgroup $\calH$ and so the element $\theta^{M}$ preserves $\F^{i}$, therefore $\supp(\theta^{M}\mu_{+}^{i})$ is carried by $\F^{i}$. 

Suppose, for the sake of contradiction, that $\theta^{M}([\mu_{+}^{k}])= [\mu]$. In that case we have $\supp(\theta^{M}([\mu_{+}^{k}]))=\supp([\mu])$, hence 
\[
\supp(\theta^{M}([\mu_{+}^{i}]))\subset\supp(\theta^{M}([\mu_{+}^{k}]))=\supp([\mu]). 
\]

Only sublaminations of $\supp([\mu])$ that are carried by $\F^{i}$ could possibly come from supports of extremal points of $\Delta_{\pm}^{i}(\varphi)$, and since $(\theta^{M}\Delta_{+}^{i})\cap\Delta_{\pm}^{i}=\emptyset$, the above inclusion is not possible, hence we get a contradiction. 

Since the support of any point in $\Delta_{+}(\eta)$ is union of supports of extremal points, we get  $\Delta_{+}(\eta)\cap\Delta_{\pm}(\varphi)=\emptyset$. A symmetric argument finishes the proof.  

\end{proof}

We will prove the hyperbolicity of the extension using a classical argument of Bestvina, Feighn and Handel \cite{BFH97} which originates in the work of Mosher \cite{Mosher} as interpreted by Kapovich and Lustig \cite{KL5}. 

\begin{proposition}\label{flare} Let $\varphi,\psi\in\mathcal{H}$ be two independent atoroidal outer automorphisms. Then, there exist $M,N>0$ such that for any $\mu\in\Curr(\FN)$ and for all $n\ge N, m\ge M$, for at least three out of four elements $\alpha$ in $\{\varphi^{n}, \varphi^{-n}, \psi^{m}, \psi^{-m}\}$  
\[
|\alpha\mu|_{G}\ge 2|\mu|_{G}. 
\]
\end{proposition}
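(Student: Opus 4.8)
The plan is to run a ping-pong/flaring argument on $\PCurr(\FN)$ using the generalized north-south dynamics established in Theorem \ref{dynamicsofhyp}, and then upgrade the projective statement to the metric (length-doubling) statement by compactness. First I would fix, for each of $\varphi$ and $\psi$, the attracting and repelling simplices $\Delta_\pm(\varphi)$ and $\Delta_\pm(\psi)$ from Theorem \ref{dynamicsofhyp}; by Definition \ref{independent} and the hypothesis of independence these four sets are pairwise disjoint. Since $\PCurr(\FN)$ is compact metrizable, I can choose pairwise disjoint open neighborhoods $U_+(\varphi), U_-(\varphi), U_+(\psi), U_-(\psi)$ of the respective simplices. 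Now let $[\mu]\in\PCurr(\FN)$ be arbitrary. Because the four neighborhoods are disjoint, $[\mu]$ lies in the complement of at least three of them — say it avoids $\Delta_-(\varphi)$, $\Delta_-(\psi)$, and one more, possibly after relabeling. Then applying generalized north-south dynamics: for $[\mu]\notin U_-(\varphi)$, the set $K_+ = \PCurr(\FN)\setminus U_-(\varphi)$ is compact and disjoint from $\Delta_-(\varphi)$, so there is $N_1$ with $\varphi^n[\mu]\in U_+(\varphi)$ for all $n\ge N_1$, uniformly in $[\mu]\in K_+$; similarly in the other directions. This gives, for each $[\mu]$, at least three of the four elements $\alpha\in\{\varphi^{\pm n},\psi^{\pm m}\}$ pushing $[\mu]$ into the corresponding attracting neighborhood, with uniform exponents $n\ge N_0$, $m\ge M_0$.

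The second step is to convert ``$\alpha[\mu]$ lies near an attracting simplex'' into ``$|\alpha\mu|_G \ge 2|\mu|_G$'' for an appropriate basis/marked graph $G$. Here I would use the fact that $\mu\mapsto |\mu|_G$ (the total mass, or intersection number with the chosen marked graph) is continuous and homogeneous of degree one on $\Curr(\FN)$, so it descends to a continuous positive function on the compact space $\PCurr(\FN)$, bounded away from $0$ and $\infty$. On each attracting neighborhood $U_+(\varphi)$ one has the expansion estimate: there is a constant $\lambda_\varphi>1$ (coming from the Perron–Frobenius/expansion factor of the $\CT$ for $\varphi$, cf. Convention-Remark \ref{passtoexpand} and Proposition \ref{lotsofexpanding}) such that for currents projectively close to $\Delta_+(\varphi)$, applying $\varphi$ once multiplies $|\cdot|_G$ by at least some factor $>1$; iterating and passing to a high enough power $\varphi^n$ makes this factor exceed $2$. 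More precisely, by shrinking the neighborhoods $U_\pm$ if necessary and using continuity of the (normalized) action of a fixed power of $\varphi$ near its attracting simplex, I can arrange: there is $N$ so that for all $n\ge N$ and all $[\mu]\in U_+(\varphi)$ (indeed for all $[\mu]$ with $\varphi^n[\mu]\in U_+(\varphi)$, by tracking how $|\cdot|_G$ grows along the orbit segment landing in $U_+(\varphi)$) one has $|\varphi^n\mu|_G\ge 2|\mu|_G$; and symmetrically for $\varphi^{-n},\psi^{\pm m}$. Combining with step one: for any $\mu$, at least three of the four elements simultaneously land it in an attracting neighborhood \emph{and} hence stretch its $G$-length by a factor $\ge 2$, which is the claim.

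The main obstacle is the second step — the passage from a purely projective dynamical statement to the quantitative metric inequality $|\alpha\mu|_G\ge 2|\mu|_G$ uniformly over \emph{all} $\mu\in\Curr(\FN)$, including currents whose support meets the non-expanding (Nielsen/zero) strata and which therefore are \emph{not} themselves close to an attracting simplex but are merely swept toward it. The subtlety is that a current $\mu$ far from every $U_\pm$ still has at least three of the $\alpha$'s carrying it into an attracting neighborhood, but the length growth along the intermediate orbit segment (before entering $U_+$) must be controlled from below too. To handle this I would argue as follows: by compactness of $\PCurr(\FN)\setminus\big(U_-(\varphi)\cup U_+(\varphi)\big)$ and continuity of $[\mu]\mapsto \log\frac{|\varphi\mu|_G}{|\mu|_G}$, the length of any orbit segment staying in this compact ``transition region'' is bounded below (it cannot shrink by more than a fixed factor per step, and cannot stay in the region for more than a bounded number of steps once $n$ exceeds the uniform entry time $N_1$ from step one), while inside $U_+(\varphi)$ the length genuinely expands by a definite factor $>1$ per application of the relevant power; thus for $n$ large enough the net effect is a factor $\ge 2$. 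The same reasoning, carried out on the $\CT$ for $\varphi^{-1}$ and the $\CT$s for $\psi^{\pm1}$, handles the other three elements, and taking $N,M$ to be the maxima of the finitely many exponents produced completes the proof.
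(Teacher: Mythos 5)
Your proposal is correct and follows essentially the same route as the paper's proof: choose pairwise disjoint neighborhoods of the four simplices (possible by independence), use the uniform generalized north--south dynamics of Theorem \ref{dynamicsofhyp} to get uniform entry times, and use the fact that the extremal points of the attracting simplices are eigencurrents with eigenvalue $\lambda>1$ (cf.\ \cite[Lemma 4.12]{KL5}) to convert proximity to an attracting simplex into the length-doubling estimate. Your explicit control of the transition segment (bounded per-step contraction before entering the attracting neighborhood, then definite expansion) is exactly the content the paper compresses into its choice of exponents $M=M'+M_{+}$, $N=N'+N_{+}$, so no gap remains; only your passing remark that $|\cdot|_{G}$ descends to $\PCurr(\FN)$ should be replaced, as you in effect do later, by the statement that the ratio $|\varphi\mu|_{G}/|\mu|_{G}$ descends.
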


\begin{proof} Let $U$ be a sufficiently small open neighborhood of $\Delta_{+}(\varphi)$, and $M_0>0$ so that for any $\mu\in\Curr(\FN)$ such that $[\mu]\in\Delta_{+}(\varphi)$ it holds that $|\varphi^{n}(\mu)|_{G}\ge 2 |\mu|_{G}$ for all $n\ge M_0$. This can be done, because of the topology of the space of currents, and the fact that for each extremal point $[\mu_{+}]$ of $\Delta_{+}(\varphi)$, $\varphi(\mu_{+})=\lambda \mu_{+}$ for some $\lambda>1$. For the corresponding statement in the fully irreducible case see \cite[Lemma 4.12]{KL5}. 

We also choose a small neighborhood $V$ of $\Delta_{-}(\varphi)$, and $M_1$ so that for each $\mu\in\Curr(\FN)$ such that $[\mu]\in\Delta_{-}(\varphi)$ it holds that  $|\varphi^{-n}(\mu)|_{G}\ge 2 |\mu|_{G}$ for all $n\ge M$. Let $M'=\max\{M_0, M_1\}$. 

Similarly, we choose neighborhoods $U'$ and $V'$ of $\Delta_{+}(\psi)$ and $\Delta_{-}(\psi)$ respectively and a corresponding $N'>0$. 

By Theorem \ref{dynamicsofhyp}, there exists an exponent $M_{+}$ such that \[
\varphi^{n}(\PCurr(\FN)\setminus V)\subset U\] and \[\varphi^{-n}(\PCurr(\FN)\setminus U)\subset V\] for all $n\ge M_{+}$. 

Similarly, there exists an exponent $N_{+}$ such that \[
\psi^{n}(\PCurr(\FN)\setminus V')\subset U'\]
and \[
\psi^{-n}(\PCurr(\FN)\setminus U')\subset V'\]
for all $n\ge N_{+}$.

Now set $M=M'+M_{+}$ and $N=N'+N_{+}$. Let $\mu\in V$. Then, the choice of $M$ and $N$ guarantee that $|\varphi^{-n}\mu|_{G}\ge 2|\mu|_{G}$, $|\psi^{m}\mu|_{G}\ge 2|\mu|_{G}$ and $|\psi^{-m}\mu|_{G}\ge 2|\mu|_{G}$. Other cases can be proved similarly, hence the proposition follows. 

\end{proof}

\begin{proof}[Proof of Theorem \ref{maintwo}]

Let ${F}^i$ be a minimal, non-trivial $\calH$-invariant free factor, and let $\varphi, \eta, \theta$ be as in Lemma \ref{goodconjugator}. Since $\theta$ is fully irreducible, for large $M$ the free factors $F^{i}_{\varphi}$ and $\theta^{M}F^{i}_{\varphi}$ \emph{fill} the free factor $\F^{i}$. Under this assumption, based on work of Bestvina and Feighn \cite{BFsubfactor}, Taylor \cite[Theorem 1.3]{ar:Taylor14} proved that for some $K>0$, the group $\langle \varphi^{K},\eta^{K}\rangle$ is isomorphic to a free group of rank $2$. (He proves much more but we don't need them here.)

The fact that the corresponding free group extension is hyperbolic now follows from Proposition \ref{flare} and Bestvina--Feighn combination theorem, see proof of \cite[Theorem 5.2]{BFH97}.  

\end{proof}

We finish the paper with Corollary \ref{irreduciblenotcoco} from the introduction: 

\begin{restate}{Corollary}{irreduciblenotcoco} Let $\varphi\in\Out(\FN)$ be a fully irreducible and atoroidal outer automorphism. Then, for any atoroidal outer automorphism $\psi\in\Out(\FN)$ (not necessarily fully irreducible) which is not commensurable with $\varphi$, there exists an exponent $M>0$ so that for all $n,m>M$,  the subgroup $\Gamma=\langle\varphi^{n},\psi^m\rangle<\Out(\FN)$ is purely atoroidal and the corresponding free extension $E_{\Gamma}$ is hyperbolic. 
\end{restate}

\begin{proof} Let $\varphi$ be as above, and let $[\mu_{+}(\varphi)]$, and $[\mu_{-}(\varphi)]$ be the corresponding stable and unstable currents in $\PCurr(\FN)$. Since $\psi$ is not commensurable with $\varphi$, the attracting simplex $\Delta_{+}(\psi)$, the repelling simplex $\Delta_{-}(\psi)$, and the stable and unstable currents $[\mu_{+}(\varphi)]$ and $[\mu_{-}(\varphi)]$ are all disjoint. 

Choose disjoint open neighborhoods of these sets, and choose high enough powers of $\varphi$ and $\psi$ so that there is a uniform north-south dynamics which is guaranteed by Theorem \ref{dynamicsofhyp}. Then Proposition \ref{flare}, together with Bestvina--Feighn combination theorem gives the required result. 
 
\end{proof}


\bibliography{bib}
\bibliographystyle{acm}

\end{document}